\newtheorem{theorem}{Theorem}[section]
\newtheorem{corollary}[theorem]{Corollary}
\newtheorem{lemma}[theorem]{Lemma}
\newtheorem{proposition}[theorem]{Proposition}
\theoremstyle{definition}
\newtheorem{definition}[theorem]{Definition}
\newtheorem{example}[theorem]{Example}
\theoremstyle{remark}
\numberwithin{figure}{section}
\numberwithin{table}{section}
\newcommand*\acknowledgment[1]{%
	\begingroup\noindent
	\rightskip\leftskip
	\begin{flushleft}\textbf{\large Acknowledgment.}\, #1%
		\par\vspace*{1mm}\end{flushleft}\endgroup}
\begin{document}

\title[PROXIMAL MOTION PLANNING ALGORITHMS]{PROXIMAL MOTION PLANNING ALGORITHMS}

\author{MEL\.{I}H \.{I}S and \.{I}SMET KARACA}
\date{\today}

\address{\textsc{Melih Is,}
	Ege University\\
	Faculty of Sciences\\
	Department of Mathematics\\
	Izmir, Türkiye}
\email{melih.is@ege.edu.tr}
\address{\textsc{Ismet Karaca$^1$,}
	Ege University\\
	Faculty of Science\\
	Department of Mathematics\\
	Izmir, Türkiye}
\address{\textsc{Ismet Karaca$^2$,}
	Azerbaijan State Agrarian University\\
	Faculty Agricultural Economics\\
	Department of Agrarian Economics\\
	Gence, Azerbaijan}
\email{ismet.karaca@ege.edu.tr}

\subjclass[2010]{54E17, 55M30, 54H05, 55R80}

\keywords{Proximity, descriptive proximity, nearness theory, motion planning, topological complexity}

\begin{abstract}
    In this paper, we transfer the problem of measuring navigational complexity in topological spaces to the nearness theory. We investigate the most important component of this problem, the topological complexity number (denoted by TC), with its different versions including relative and higher TC, on the proximal Schwarz genus as well as the proximal (higher) homotopic distance. We outline the fundamental properties of some concepts related to the proximal (or descriptive proximal) TC numbers. In addition, we provide some instances of (descriptive) proximity spaces, specifically on basic robot vacuum cleaners, to illustrate the results given on proximal and descriptive proximal TC.
\end{abstract}

\maketitle

\section{Introduction}
\label{introduction}
\quad Farber\cite{Farber:2003} uses algebraic topology techniques to first understand studies of motion planning problem-solving. This new interpretation makes the notion of topological complexity (denoted by TC), a homotopy invariant, the main focus of the motion planning problem. Rudyak\cite{Rudyak:2010}, Pavesic\cite{Pavesic:2019}, and İs and Karaca\cite{MelihKaraca:2022}, respectively, transform the problem of computing the TC number of a path-connected space to the problems of computing the higher dimensional TC number of a path-connected space, computing the TC number of a fibration, and computing the higher dimensional TC number of a fibration. Although the parametrized topological complexity number is the main focus of recent studies on TC, many other TC-related concepts, such as LS-category (denoted by cat) and homotopic distance, and many topological spaces, particularly manifolds, and CW-complexes, are also thoroughly investigated. 

\quad The problem of finding the TC number is not limited to topological spaces, and there is a lot of work on digital images (see \cite{KaracaMelih:2018,MelihKaraca:2020,MelihKaraca:2022(1),MelihKaraca:2022(2),MelihKaraca:2023(2)} for the TC numbers of digital images). In fact, the TC-related results on digital images are compared with the TC-related results in topological spaces, and the differences are clearly stated \cite{MelihKaraca:2022(2)}. In this sense, TC investigations on nearness theory offer a fresh perspective on the motion planning problem and help to establish precise comparisons in both topological spaces and digital images. Just as observed on digital images, the differences in results for TC computations in nearness theory have the potential to directly influence the perspective on the robot motion planning problem of topological spaces. The idea of nearness theory is derived from relations such as proximities (or descriptive proximities), which express that two sets are near (or descriptively near) each other. The strong structure of proximity spaces stands out in the variety of application areas such as cell biology, visual merchandising, or microscope images\cite{NaimpallyPeters:2013} (see \cite{NaimpallyPeters:2013,PetersNaimpally:2012} for more interesting examples). This work aims to start a new discussion by adding explorations on TC to create motion planning algorithms for areas involving proximity spaces.

\quad The paper is organized as follows. Basic concepts and facts for the proximity and the descriptive proximity are covered in Section \ref{sec:preliminaries}. The first definition of the proximal TC number is offered using the notion of a proximal Schwarz genus, which is initially introduced in Section \ref{sec:1}. The variation in the number of TC according to the coarseness/fineness of a proximity space is revealed with accurate findings after the initial examples of the TC computation are shown. As a type of TC number, the version of the relative TC number in proximity spaces is introduced. Proximal homotopic distance and its characteristics are discussed in Section \ref{sec:2} as another way of identifying proximal TC numbers. A change in proximal homotopic distance is observed with respect to the coarseness/fineness of a proximity space in just the same way as TC. Eventually, in nearness theory, the concept of TC is given an alternative definition via the proximal homotopic distance. In Section \ref{sec:3}, higher dimensional versions of both the proximal homotopic distance and the TC number are introduced, and their basic properties are also discussed in the higher dimensions. In Section \ref{sec:4}, descriptive proximity interpretations of the Schwarz genus, homotopic distance, TC, relative TC, and generalizations of all these numbers are exhibited. Some examples of descriptive proximal TC are presented. Section \ref{sec:conclusion} is devoted to some open problems and recommendations. 

\section{Preliminaries}
\label{sec:preliminaries}

\quad Basic facts about proximity and descriptive proximity spaces, particularly proximal and descriptive proximal homotopy theory, should be remembered. These provide a strong ground to construct the notion of TC on the nearness (or descriptive nearness) theory.

\quad Let $d$ be a pseudo-metric on a set $X$, and a binary relation $\delta$ be defined by
\begin{eqnarray*}
	''E \delta F \ \ \Leftrightarrow \ \ D(E,F) = 0'',
\end{eqnarray*}
where $D(E,F)$ is the set $\inf\{d(x_{0},x_{1}): x_{0} \in E, x_{1} \in F\}$. Then $\delta$ is called an \textit{Efremovic proximity} on $X$ (or simply \textit{proximity} on $X$) if it satisfies 
\begin{eqnarray*}
	&& E \delta F \ \ \Rightarrow \ \ F \delta E, \hspace*{1.0cm} (E \cup F) \delta G \ \ \Leftrightarrow \ \ E \delta G \ \vee \ F \delta G, \\
	&& E \delta F \ \ \Rightarrow \ \ E \neq \emptyset \ \land \ F \neq \emptyset, \hspace*{1.0cm} E \cap F \neq \emptyset \ \ \Rightarrow \ \ E \delta F,\\
	&& E \underline{\delta} F \ \ \Rightarrow \ \ \exists G \subset X: E \underline{\delta} G \ \land \ (X-G) \underline{\delta} F,
\end{eqnarray*}
where $E \delta F$ means that \textit{“$E$ is near $F$''} \cite{Efremovic:1952,Smirnov:1952,NaimpallyWarrack:1970}. Also, the notation $E \underline{\delta} F$ is read as \textit{“$E$ is far from $F$''}. The pair $(X,\delta)$ is simply called a \textit{proximity space}.

\quad Given a subset $E$ of $(X,\delta)$, $E \in \mathcal{K}$, i.e., $E$ is closed in $X$ if and only if the fact $x$ is near $E$ implies that $x \in E$. This provides the fact that for a proximity $\delta$ on $X$, one has a topology $\tau(\delta)$ on $X$ induced by $\delta$ via Kuratowski closure operator\cite{NaimpallyWarrack:1970}. Mathematically, given a proximity $\delta$ and a topology $\tau(\delta)$ on a set $X$, the closure of $E$, denoted by $\overline{E}$, coincides with the set of points in $X$ that is near $E$ \cite{NaimpallyWarrack:1970}.  

\quad Let $E$ be a subset of a proximity space $(X,\delta)$. Assume that $\delta_{E}$ is given by $F \delta G$ if and only if $F \delta_{E} G$ for $F$, $G \in 2^{E}$. Then $(E,\delta_{E})$ is called a \textit{subspace} of $(X,\delta)$ \cite{NaimpallyWarrack:1970}. In addition, $\tau(\delta_{E})$ is said to be a \textit{subspace topology on $E$} by $\tau(\delta)$. Given two proximity spaces $(X,\delta)$ and $(X^{'},\delta^{'})$, it is possible to construct new proximity on their cartesian product proximity space $X \times X^{'}$\cite{Leader:1964}: Let $E_{1} \times E_{2}$ and $F_{1} \times F_{2}$ be a subset of $X \times X^{'}$. Then $E_{1} \times E_{2}$ \textit{is near} $F_{1} \times F_{2}$ provided that $E_{1}$ is near $F_{1}$ with respect to $\delta$ and $E_{2}$ is near $F_{2}$ with respect to $\delta^{'}$. Let $(X,\delta)$ and $(X,\delta^{'})$ be two proximity spaces. Then $\delta$ is \textit{finer than} $\delta^{'}$ (or $\delta^{'}$ is \textit{coarser than} $\delta$), denoted by $\delta > \delta^{'}$, if and only if $E \delta F$ requires $E \delta^{'} F$ \cite{NaimpallyWarrack:1970}. 

\quad Let us consider the descriptive proximity case and its basic facts. Let $X$ be a nonempty set, $x \in X$ any point, $E \subset X$ any subset, and $e \in E$ a point. The map $\phi_{j} : X \rightarrow \mathbb{R}$ expresses as a probe function for each $j = 1,\cdots,m$, and a feature value of $x$ is denoted by $\phi_{j}(x)$. Consider $\Phi$ as the set of probe functions $\{\phi_{1},\cdots,\phi_{m}\}$. Then $\mathcal{Q}(E) = \{\Phi(e) : e \in E\}$ is the set of descriptions of the point $e$ for a feature vector $\Phi(e)$ given by $(\phi_{1}(e),\cdots,\phi_{m}(e))$. Let $E$, $F \subset X$. A binary relation $\delta_{\Phi}$ on $X$ is defined by 
\begin{eqnarray}
	E \delta_{\Phi} F \ \Leftrightarrow \ \mathcal{Q}(E) \cap \mathcal{Q}(F) \neq \emptyset,
\end{eqnarray}
and “$E$ is \textit{descriptively near} $F$'' means that $E \delta_{\Phi} F$ \cite{Peters1:2007,Peters2:2007,Peters:2013}. Also, $E \underline{\delta_{\Phi}} F$ is read as “$E$ is \textit{descriptively far from} $F$''. The intersection and the union for descriptive proximity is different from the proximity \cite{Peters:2013}: The \textit{descriptive intersection of $E$ and $F$} is 
\begin{eqnarray*}
	E \displaystyle \bigcap_{\Phi} F = \{x \in E \cup F : \Phi(x) \in \mathcal{Q}(E) \ \land \ \Phi(x) \in \mathcal{Q}(F)\},
\end{eqnarray*}
and the \textit{descriptive union of $E$ and $F$} is
\begin{eqnarray*}
	E \displaystyle \bigcup_{\Phi} F = \{x \in E \cup F : \Phi(x) \in \mathcal{Q}(E) \ \vee  \ \Phi(x) \in \mathcal{Q}(F)\}.
\end{eqnarray*}

$\delta_{\Phi}$ defined by (1) is said to be a \textit{descriptive Efremovic proximity} on $X$ (or simply \textit{descriptive proximity} on $X$) if it satisfies
\begin{eqnarray*}
	&& E \delta_{\Phi} F \ \ \Rightarrow \ \ F \neq \emptyset \ \land E \neq \emptyset, \hspace*{1.0cm} E \displaystyle \bigcap_{\Phi} F \neq \emptyset \ \ \Rightarrow \ \ F \delta_{\Phi} E,\\
	&& E \displaystyle \bigcap_{\Phi} F \neq \emptyset \ \ \Rightarrow \ \ F \displaystyle \bigcap_{\Phi} E, \hspace*{1.0cm} E \delta_{\Phi} (F \cup G) \ \ \Leftrightarrow \ \ E \delta_{\Phi} F \ \vee \ E \delta_{\Phi} G,\\
	&& E \underline{\delta_{\Phi}} F \ \ \Rightarrow \ \ \exists G \subset X: E \underline{\delta_{\Phi}} G \ \land \ (X-G) \underline{\delta_{\Phi}} F,
\end{eqnarray*}
and the pair $(X,\delta_{\Phi})$ is simply called a \textit{descriptive proximity space} \cite{NaimpallyPeters:2013}.

\quad Given a descriptive proximity $\delta_{\Phi}$ on $X$, a topology $\tau(\delta_{\Phi})$ is induced by $\delta_{\Phi}$ on $X$. Let $E$ be a subset of a set $X$ with the descriptive proximity $\delta_{\Phi}$. Define $\delta_{\Phi_{E}}$ by $F \delta_{\Phi} G$ if and only if $F \delta_{\Phi_{E}} G$ for $F$, $G \in 2^{E}$. Then $(E,\delta_{\Phi_{E}})$ is called a \textit{subspace} of $(X,\delta_{\Phi})$. Given two descriptive proximity spaces $(X,\delta_{\Phi})$ and $(X^{'},\delta_{\Phi}^{'})$, one can construct a new proximity on their cartesian product descriptive proximity space $X \times X^{'}$: Let $E_{1} \times E_{2}$ and $F_{1} \times F_{2}$ be a subset of $X \times X^{'}$. Then $E_{1} \times E_{2}$ \textit{is descriptively near} $F_{1} \times F_{2}$ provided that $E_{1}$ is descriptively near $F_{1}$ with respect to $\delta_{\Phi}$ and $E_{2}$ is descriptively near $F_{2}$ with respect to $\delta_{\Phi}^{'}$. 

\subsection{Proximal Homotopy and Descriptive Proximal Homotopy}
\label{subsec:subsec2}

A map $f : (X,\delta) \rightarrow (X,\delta^{'})$ is called \textit{proximally continuous} if $E \delta F$ implies that $f(E) \delta^{'} f(F)$ for $E$, $F \in 2^{X}$\cite{Efremovic:1952,Smirnov:1952}. A proximally continuous map is simply denoted by “pc-map'' in this manuscript. If $f$ is bijective, pc-map, and $f^{-1}$ is pc-map, then $f$ is a \textit{proximity isomorphism}\cite{NaimpallyWarrack:1970}. Moreover, $(X,\delta)$ and $(X^{'},\delta^{'})$ are called \textit{proximally isomorphic spaces}. A property is a \textit{proximity invariant} provided that it is preserved under proximity isomorphisms \cite{NaimpallyWarrack:1970}. Recall that any pc-map $f : (X,\delta) \rightarrow (X,\delta^{'})$ admits a continuous map $f : (X,\tau(\delta)) \rightarrow (X^{'},\tau(\delta^{'}))$ on topological spaces. This shows that a topological invariant is a proximity invariant, but the converse need not be true.

\begin{lemma}\cite{PetersTane:2021}(Gluing Lemma for Proximity)
	Assume that the maps \linebreak$f_{1} : (X,\delta_{1}) \rightarrow (X^{'},\delta^{'})$ and $f_{2} : (Y,\delta_{2}) \rightarrow (X^{'},\delta^{'})$ are two pc-maps such that $f_{1}$ and $f_{2}$ agree on $X \cap Y$. Assume that the map $f_{1} \cup f_{2} : (X \cup Y,\delta) \rightarrow (X^{'},\delta^{'})$ is given by \[f_{1} \cup f_{2}(z) = \begin{cases}
		f_{1}(z), & z \in X \\
		f_{2}(z), & z \in Y
	\end{cases}\] for all $z \in X \cup Y$. Then $f_{1} \cup f_{2}$ is a pc-map.
\end{lemma}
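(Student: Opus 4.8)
The plan is to verify the defining condition for a pc-map head on: for arbitrary $E, F \in 2^{X \cup Y}$ with $E \delta F$, I must produce $(f_1 \cup f_2)(E)\,\delta'\,(f_1 \cup f_2)(F)$, where throughout I read $\delta_1 = \delta|_X$ and $\delta_2 = \delta|_Y$ as the subspace proximities induced by $\delta$ (this is what makes the statement coherent). Write $h = f_1 \cup f_2$. Two elementary facts will carry most of the weight. First, images commute with unions, so $h(E) = f_1(E \cap X) \cup f_2(E \cap Y)$, which is unambiguous precisely because $f_1$ and $f_2$ agree on $X \cap Y$. Second, the union axiom for $\delta'$ gives monotonicity: if $A \subseteq A'$, $B \subseteq B'$ and $A\,\delta'\,B$, then $A'\,\delta'\,B'$, since $A'\,\delta'\,B \Leftrightarrow A\,\delta'\,B \vee (A'\setminus A)\,\delta'\,B$ and likewise in the second coordinate.

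First I would reduce to pieces. Writing $E = (E\cap X)\cup(E\cap Y)$ and $F = (F\cap X)\cup(F\cap Y)$ and applying the union axiom twice, $E \delta F$ forces at least one of the four nearnesses $(E\cap X)\,\delta\,(F\cap X)$, $(E\cap X)\,\delta\,(F\cap Y)$, $(E\cap Y)\,\delta\,(F\cap X)$, $(E\cap Y)\,\delta\,(F\cap Y)$ to hold. Since $h(E) \supseteq f_1(E\cap X)$ and $h(E) \supseteq f_2(E\cap Y)$ (and similarly for $F$), the monotonicity remark shows it is enough to push whichever nearness survives forward to a nearness between the corresponding images; the desired $h(E)\,\delta'\,h(F)$ then follows.

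The two \emph{diagonal} cases are immediate. If $(E\cap X)\,\delta\,(F\cap X)$, then since both sets lie in $X$ and $\delta_1$ is the subspace proximity, $(E\cap X)\,\delta_1\,(F\cap X)$; proximal continuity of $f_1$ yields $f_1(E\cap X)\,\delta'\,f_1(F\cap X)$, and monotonicity promotes this to $h(E)\,\delta'\,h(F)$. The case $(E\cap Y)\,\delta\,(F\cap Y)$ is identical using $f_2$.

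The genuine obstacle is the two \emph{cross} cases: say $A\,\delta\,B$ with $A = E\cap X \subseteq X$ and $B = F\cap Y \subseteq Y$, where I must still conclude $f_1(A)\,\delta'\,f_2(B)$ even though neither $f_i$ sees both $A$ and $B$. This is exactly the point at which the agreement of $f_1$ and $f_2$ on $X\cap Y$ must be exploited, and I expect it to require that the pieces $X, Y$ be closed in $(X\cup Y,\tau(\delta))$ — the very hypothesis that makes the classical topological pasting lemma run. Concretely, I would try to route the cross-nearness through the overlap: pass to closures via the standard proximity equivalence $A\,\delta\,B \Leftrightarrow \overline{A}\,\delta\,\overline{B}$ and argue that a nearness between a subset of the closed set $X$ and a subset of the closed set $Y$ is necessarily witnessed at $X\cap Y$, where $f_1$ and $f_2$ coincide, forcing the two images together under $\delta'$. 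I regard this as the crux of the lemma and would scrutinize it most carefully, since without such a condition a cross-nearness ``at infinity'' disjoint from $X\cap Y$ could in principle pull the images apart; so closedness of the cover is what I expect to supply the essential content of this step.
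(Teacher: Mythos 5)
The paper offers no proof of this lemma at all --- it is quoted verbatim from Peters and Vergili --- so there is no in-house argument to compare against; I can only judge your proposal on its own terms, and on those terms it is incomplete at exactly the point you flag as the crux. The reduction of $E\,\delta\,F$ to the four cases via the union axiom and symmetry is correct, the monotonicity remark is correct, and the two diagonal cases are handled properly by proximal continuity of $f_{1}$ and $f_{2}$ with respect to the subspace proximities. But the cross case is left as a plan rather than a proof, and the plan as described does not go through: it is not true that a nearness between a subset of a closed set $X$ and a subset of a closed set $Y$ must be ``witnessed at $X\cap Y$.'' In the metric proximity on $\mathbb{R}^{2}$, take $X$ to be the $x$-axis and $Y$ the graph of $e^{-x}$: both are closed, $X\cap Y=\emptyset$ (so any $f_{1},f_{2}$ vacuously agree on the overlap), and $X\,\delta\,Y$ since $D(X,Y)=0$; yet the constant maps $f_{1}\equiv 0$ and $f_{2}\equiv 1$ into $[0,1]$ send these near sets to the far sets $\{0\}$ and $\{1\}$. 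So the glued map need not be proximally continuous even for a closed cover, and no argument of the kind you sketch can close the cross case from the stated hypotheses alone.

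The constructive conclusion is that the lemma, as reproduced here with only ``$f_{1}$ and $f_{2}$ agree on $X\cap Y$'' as hypothesis, cannot be established by verifying the pc-condition directly; the cross case genuinely requires an additional assumption, for instance that whenever $E\subseteq X$, $F\subseteq Y$ and $E\,\delta\,F$, the nearness factors through the overlap (something like $E\,\delta\,(X\cap Y)$ and $F\,\delta\,(X\cap Y)$), or some other proximal separation condition on the pair $X$, $Y$. Your instinct to scrutinize this step was exactly right; you should push it one step further, to the point of either recovering the missing hypothesis from the cited source or recording the counterexample above, rather than leaving the crux as an expectation.
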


\begin{definition}\cite{PetersTane:2021}
	Let $f_{1}$ and $f_{2}$ be a map from $(X,\delta)$ to $(X^{'},\delta^{'})$. Then it is said to be that $f_{1}$ is \textit{proximally homotopic to} $f_{2}$ if there exists a pc-map $G$ from $X \times I$ to $X^{'}$ for which $G(x,0) = f_{1}(x)$ and $G(x,1) = f_{2}(x)$. The pc-map $G : (X \times I,\delta_{1}) \rightarrow (X^{'},\delta^{'})$ satisfying two properties $G(x,0) = f_{1}(x)$ and $G(x,1) = f_{2}(x)$ is called a \textit{proximal homotopy between $f_{1}$ and $f_{2}$}. 
\end{definition}

\quad An equivalence relation on proximity spaces is the proximal homotopy. $(X,\delta)$ is called \textit{proximally contractible} \cite{PetersTane:2021} if $1_{X}: (X,\delta) \rightarrow (X,\delta)$ is proximally homotopic to $c : (X,\delta) \rightarrow (X,\delta)$, $c(x) = x_{0}$, where $x_{0}$ is a constant in $X$. 

\quad Let $(X,\delta)$ be a proximity space and $x_{0}$, $x_{1} \in X$ any points. Then a pc-map $g : [0,1] \rightarrow X$ with $g(0) = x_{0}$ and $g(1) = x_{1}$ defines a \textit{proximal path with the endpoints $x_{0}$ and $x_{1}$} in $X$\cite{PetersTane:2021}. $(X,\delta)$ is said to be a \textit{path-connected proximity space} provided that there is a proximal path with the endpoints $x_{0}$ and $x_{1}$ in $X$ for any $x_{0}$, $x_{1} \in X$\cite{MelihKaraca:2023}. $(X,\delta)$ is said to be a \textit{connected proximity space} provided that $E$ is near $F$ if the union of $E$ and $F$ gives us $X$ for any nonempty $E$, $F \subset X$\cite{MrowkaPervin:1964}.

\begin{theorem}\cite{MelihKaraca:2023}
	A proximity space is path-connected if and only if it is connected.
\end{theorem}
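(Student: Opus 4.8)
The plan is to prove the two implications separately, treating the parameter interval $[0,1]$ throughout with its usual metric proximity. For the forward implication (path-connected $\Rightarrow$ connected) I would first record the auxiliary fact that $[0,1]$ is itself a connected proximity space: because $[0,1]$ is compact, its metric far-relation $A\,\underline{\delta}\,B$ amounts to the disjointness of the closures $\overline{A}$ and $\overline{B}$, so any proximal separation of $[0,1]$ would produce two disjoint nonempty closed sets whose union is $[0,1]$, contradicting the ordinary connectedness of the interval. Granting this, suppose $(X,\delta)$ is path-connected but, toward a contradiction, not connected; then there are nonempty $E,F$ with $E\cup F=X$ and $E\,\underline{\delta}\,F$. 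Choosing $x_{0}\in E$, $x_{1}\in F$, and a proximal path $g\colon[0,1]\to X$ from $x_{0}$ to $x_{1}$, I set $A=g^{-1}(E)$ and $B=g^{-1}(F)$. These cover $[0,1]$ and contain $0$ and $1$ respectively, so they are nonempty, and connectedness of $[0,1]$ forces $A\,\delta\,B$. Proximal continuity of $g$ then gives $g(A)\,\delta\,g(B)$, and since $g(A)\subseteq E$ and $g(B)\subseteq F$, the union axiom for $\delta$ (monotonicity in each argument) upgrades this to $E\,\delta\,F$, the desired contradiction.

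For the converse (connected $\Rightarrow$ path-connected) I would argue by contraposition through proximal path components. Fixing $x_{0}\in X$, let $E$ be the set of points that can be joined to $x_{0}$ by a proximal path. Reparametrizing by an order-preserving (respectively order-reversing) proximity isomorphism of $[0,1]$ and applying the Gluing Lemma for Proximity shows that concatenations and reversals of proximal paths are again proximal paths; hence ``joined by a proximal path'' is an equivalence relation and $E$ is a genuine path component containing $x_{0}$. If $E=X$ we are done, so assume $F:=X\setminus E$ is nonempty; then $E\cup F=X$ with both sets nonempty, and connectedness yields $E\,\delta\,F$.

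The heart of the argument, and the step I expect to be the main obstacle, is then an extension lemma asserting that $E\,\delta\,F$ cannot persist for a path component, equivalently that every proximal path component is proximally clopen (far from its complement). The difficulty is precisely that $E\,\delta\,F$ only records nearness of the two sets and never supplies a pair of near points; a witnessing path must instead approach $F$ through accumulation, exactly as an injective parametrization of the topologist's sine curve proximally connects its limit segment to its oscillating arc although no ordinary path does so. I would therefore attempt to build $g$ on $[0,1)$ inside $E$ so that the tail $g\big((1-\varepsilon,1)\big)$ is near a chosen endpoint $g(1)\in F$, and then verify the proximal-continuity condition $A\,\delta\,B\Rightarrow g(A)\,\delta\,g(B)$ across the seam at $t=1$; such a path would place a point of $F$ into the path component $E$, contradicting $F=X\setminus E$ and forcing $E=X$. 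Discharging this seam condition in a general proximity space is where the Efremovic separation axiom and the explicit description of $\delta$ on $X$ must do the real work.
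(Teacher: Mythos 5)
The paper gives no proof of this statement at all---it is imported from the preprint \cite{MelihKaraca:2023}---so your proposal can only be judged on its own terms. The forward implication is essentially complete and correct: $[0,1]$ with its metric proximity is a connected proximity space (compactness turns $A\,\underline{\delta}\,B$ into $\overline{A}\cap\overline{B}=\emptyset$, contradicting ordinary connectedness of the interval); pulling a putative separation $E\,\underline{\delta}\,F$ of $X$ back along a proximal path gives nonempty covering sets $A,B\subset[0,1]$ with $A\,\delta\,B$; and proximal continuity together with the union axiom (monotonicity) upgrades $g(A)\,\delta\,g(B)$ to $E\,\delta\,F$. No complaints about that half.

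The converse, however, is not proved, and you say so yourself: the entire content of that direction is the ``extension lemma'' that a proximal path component is far from its complement, and you leave it as the step where the Efremovic axiom ``must do the real work.'' That is the theorem, not a routine verification. Worse, the heuristic you offer for why the seam construction should succeed is false: a pc-map $g\colon[0,1]\to X$ is automatically continuous for the induced topologies $\tau(\delta)$ (the paper records exactly this in Section 2), and for a metric proximity the induced topology is the metric topology. Consequently there is \emph{no} proximal path joining the limit segment of the topologist's sine curve to its oscillating arc---such a map would be an ordinary continuous path, which does not exist---so that example does not illustrate a proximal path created ``through accumulation''; on the contrary, it is a connected metric proximity space on which your strategy visibly fails. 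Any $g$ built on $[0,1)$ inside $E$ with $g(1)\in F$ must be continuous at $t=1$ in $\tau(\delta)$, and nearness of the tail $g\bigl((1-\varepsilon,1)\bigr)$ to $g(1)$ does not supply that. So the converse needs a genuinely different idea (or hypotheses beyond those stated), and as written your proposal establishes only one of the two implications.
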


\quad Let us recall basic facts related to the descriptive proximal homotopy. A map $f : (X,\delta_{\Phi}) \rightarrow (X,\delta_{\Phi}^{'})$ is called \textit{descriptive proximally continuous} if $E \delta_{\Phi} F$ implies that $f(E) \delta_{\Phi}^{'} f(F)$ for $E$, $F \in 2^{X}$\cite{Peters:2014,PetersTane:2021}. A descriptive proximally continuous map is simply denoted by “dpc-map'' in this manuscript. If $f$ is bijective, a dpc-map, and $f^{-1}$ is a dpc-map, then $f$ is a \textit{descriptive proximity isomorphism}\cite{NaimpallyWarrack:1970}. Moreover, $(X,\delta_{\Phi})$ and $(X^{'},\delta_{\Phi}^{'})$ are called \textit{descriptive proximally isomorphic spaces}. A property is a \textit{descriptive proximity invariant} provided that it is preserved under descriptive proximity isomorphisms \cite{NaimpallyWarrack:1970}.

\begin{lemma}\cite{PetersTane:2021}(Gluing Lemma for Descriptive Proximity)
	Given two dpc-maps $f_{1} : (X,\delta_{\Phi}^{1}) \rightarrow (X^{'},\delta_{\Phi}^{'})$ and $f_{2} : (Y,\delta_{\Phi}^{2}) \rightarrow (X^{'},\delta_{\Phi}^{'})$ such that $f_{1}$ and $f_{2}$ agree on $X \cap Y$. Assume that the map $f_{1} \cup f_{2} : (X \cup Y,\delta_{\Phi}) \rightarrow (X^{'},\delta_{\Phi}^{'})$ is given by $f_{1} \cup f_{2}(z) = \begin{cases}
		f_{1}(z), & z \in X \\
		f_{2}(z), & z \in Y
	\end{cases}$ for all $z \in X \cup Y$. Then $f_{1} \cup f_{2}$ is a dpc-map.
\end{lemma}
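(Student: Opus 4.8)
The plan is to mimic the proof of the Gluing Lemma for Proximity, transporting every occurrence of ``near/far'' to ``descriptively near/far'' and reading the descriptive union axiom and the description operator $\mathcal{Q}$ in place of their Efremovič counterparts. Writing $f = f_{1} \cup f_{2}$, it suffices to verify the defining implication of a dpc-map: for all $E, F \in 2^{X \cup Y}$, the relation $E \delta_{\Phi} F$ forces $f(E) \delta_{\Phi}^{'} f(F)$. Before any case analysis I would first record a monotonicity principle for $\delta_{\Phi}^{'}$, namely that $A \delta_{\Phi}^{'} B$ together with $A \subseteq A^{'}$ and $B \subseteq B^{'}$ yields $A^{'} \delta_{\Phi}^{'} B^{'}$. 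This is immediate from the union axiom $E \delta_{\Phi} (F \cup G) \Leftrightarrow E \delta_{\Phi} F \vee E \delta_{\Phi} G$ applied on each side (using symmetry of $\delta_{\Phi}^{'}$), and it will be used repeatedly to lift nearness of pieces back to nearness of $f(E)$ and $f(F)$.

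Next I would decompose along the cover $\{X, Y\}$. Since $E, F \subseteq X \cup Y$, we have $E = (E \cap X) \cup (E \cap Y)$ and $F = (F \cap X) \cup (F \cap Y)$, and correspondingly $\mathcal{Q}(E) = \mathcal{Q}(E \cap X) \cup \mathcal{Q}(E \cap Y)$. Applying the descriptive union axiom twice, $E \delta_{\Phi} F$ is equivalent to the disjunction of the four relations obtained by pairing a piece of $E$ with a piece of $F$; it therefore suffices to treat each such relation separately and to conclude $f(E) \delta_{\Phi}^{'} f(F)$ in each. The two \emph{pure} cases are routine: if $(E \cap X) \delta_{\Phi} (F \cap X)$, then viewing both as subsets of $X$ and using that $f_{1}$ is a dpc-map gives $f_{1}(E \cap X) \delta_{\Phi}^{'} f_{1}(F \cap X)$; since these images sit inside $f(E)$ and $f(F)$, monotonicity yields $f(E) \delta_{\Phi}^{'} f(F)$. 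The case $(E \cap Y) \delta_{\Phi} (F \cap Y)$ is identical with $f_{2}$ in place of $f_{1}$. The only point to check here is that $f$ restricted to $E \cap X$ genuinely coincides with $f_{1}$ (and likewise for $f_{2}$), which is exactly where the hypothesis that $f_{1}$ and $f_{2}$ agree on $X \cap Y$ is needed to make $f$ well defined.

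The hard part will be the pair of \emph{mixed} cases, say $(E \cap X) \delta_{\Phi} (F \cap Y)$, where one set lives in $X$, the other in $Y$, and the relevant restriction of $f$ switches from $f_{1}$ to $f_{2}$. Unwinding the definition, this relation reads $\mathcal{Q}(E \cap X) \cap \mathcal{Q}(F \cap Y) \neq \emptyset$, so there is a feature description shared by some $e \in E \cap X$ and some $y \in F \cap Y$; the target becomes $\mathcal{Q}^{'}(f_{1}(E \cap X)) \cap \mathcal{Q}^{'}(f_{2}(F \cap Y)) \neq \emptyset$. This does not follow from the dpc-continuity of $f_{1}$ and $f_{2}$ in isolation, since neither map is defined at both witnessing points, and the shared description must somehow be transported across the boundary. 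The step I expect to carry the argument is to route this common description through the overlap $X \cap Y$ and use the agreement $f_{1} = f_{2}$ on $X \cap Y$ to identify the two images, thereby collapsing the mixed case onto one of the already-settled pure cases.

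Making this reduction precise is the delicate point, and it is here that the descriptive Efremovič (far-set) axiom $E \underline{\delta_{\Phi}} F \Rightarrow \exists G \subset X : E \underline{\delta_{\Phi}} G \wedge (X - G) \underline{\delta_{\Phi}} F$ should be brought in, exactly as its Efremovič analogue is used in the proximity version; equivalently one may argue by contraposition, starting from $f(E) \underline{\delta_{\Phi}^{'}} f(F)$ and separating the images so that a cross-boundary descriptive nearness is forced to be witnessed within $X \cap Y$. I would isolate the precise condition on the overlap under which this witnessing holds, verify it in the setting at hand, and then feed the resulting witness back into the pure-case machinery to finish. Once the mixed cases are disposed of, the four cases together give $f(E) \delta_{\Phi}^{'} f(F)$ whenever $E \delta_{\Phi} F$, which is precisely the assertion that $f_{1} \cup f_{2}$ is a dpc-map.
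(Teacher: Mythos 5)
First, a point of reference: the paper does not prove this lemma at all --- it is quoted verbatim from \cite{PetersTane:2021} as a preliminary, so there is no in-paper proof to compare your argument against. Judged on its own terms, your reduction is set up correctly: the monotonicity of $\delta_{\Phi}^{'}$ under enlargement follows from the union axiom, the decomposition $E=(E\cap X)\cup(E\cap Y)$, $F=(F\cap X)\cup(F\cap Y)$ turns $E\,\delta_{\Phi}\,F$ into a disjunction of four pairings, and the two pure cases are correctly dispatched by the dpc-continuity of $f_{1}$ and $f_{2}$ together with monotonicity.

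The genuine gap is exactly where you flag it: the mixed case is never actually proved. You state an expectation that the shared description can be ``routed through'' $X\cap Y$ and that the Efremovi\v{c} far-set axiom will make this precise, but no such argument is given, and none can be extracted from the stated hypotheses alone. The obstruction lives in the domain, not the codomain, so arguing by contraposition from $f(E)\,\underline{\delta_{\Phi}^{'}}\,f(F)$ and separating the images does not touch it. Concretely: take $X=\{a\}$, $Y=\{b\}$ disjoint, put the indiscrete descriptive proximity on $X\cup Y$ (so $\{a\}\,\delta_{\Phi}\,\{b\}$), let $X^{'}=\{0,1\}$ carry a descriptive proximity in which $\{0\}$ and $\{1\}$ are descriptively far, and set $f_{1}(a)=0$, $f_{2}(b)=1$. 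Each $f_{i}$ is trivially a dpc-map and the two maps vacuously agree on $X\cap Y=\emptyset$, yet $f_{1}\cup f_{2}$ is not a dpc-map. Hence the mixed case --- which is the entire content of the gluing lemma --- requires additional hypotheses tying cross-nearness between $X\setminus Y$ and $Y\setminus X$ to the overlap (or a particular choice of $\delta_{\Phi}$ on $X\cup Y$, as in the path-concatenation setting of \cite{PetersTane:2021}); you would need to identify and import those hypotheses before the step you defer can be carried out.
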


\begin{definition}\cite{PetersTane:2021}
	Let $f_{1}$ and $f_{2}$ be a map from $(X,\delta_{\Phi})$ to $(X^{'},\delta_{\Phi}^{'})$. Then it is said to be that $f_{1}$ is \textit{descriptive proximally homotopic to} $f_{2}$ if there exists a dpc-map $G$ from $X \times I$ to $X^{'}$ for which $G(x,0) = f_{1}(x)$ and $G(x,1) = f_{2}(x)$. The dpc-map $G : (X \times I,\delta_{\Phi}^{1}) \rightarrow (X^{'},\delta_{\Phi}^{'})$ satisfying two conditions $G(x,0) = f_{1}(x)$ and $G(x,1) = f_{2}(x)$ is called a \textit{descriptive proximal homotopy between $f_{1}$ and $f_{2}$}.
\end{definition}

\quad An equivalence relation on descriptive proximity spaces is the descriptive proximal homotopy. $(X,\delta_{\Phi})$ is called \textit{descriptive proximally contractible} \cite{PetersTane:2021} if $1_{X}$ from $(X,\delta_{\Phi})$ to $(X,\delta_{\Phi})$ is descriptive proximally homotopic to $c : (X,\delta_{\Phi}) \rightarrow (X,\delta_{\Phi})$, $c(x) = x_{0}$, where $x_{0}$ is a constant in $X$. 

\quad Given a descriptive proximity space $(X,\delta_{\Phi})$ and any points $x_{0}$, $x_{1} \in X$, a dpc-map $g : [0,1] \rightarrow X$ with $g(0) = x_{0}$ and $g(1) = x_{1}$ defines a \textit{descriptive proximal path with the endpoints $x_{0}$ and $x_{1}$} in $X$\cite{PetersTane:2021}. $(X,\delta_{\Phi})$ is said to be a \textit{path-connected descriptive proximity space} provided that there is a descriptive proximal path with the endpoints $x_{0}$ and $x_{1}$ in $X$ for any $x_{0}$, $x_{1} \in X$\cite{MelihKaraca:2023}. $(X,\delta_{\Phi})$ is said to be a \textit{connected descriptive proximity space} provided that $E$ is descriptively near $F$ if $E \cup F = X$ for any nonempty $E$, $F \subset X$\cite{MrowkaPervin:1964}.

\begin{theorem}\cite{MelihKaraca:2023}
	A descriptive proximity space is path-connected if and only if it is connected.
\end{theorem}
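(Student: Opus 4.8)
The plan is to prove the two implications separately, following the template of the proximity-space theorem recorded above and replacing $\delta$, $\cup$, $\cap$ throughout by $\delta_{\Phi}$ and the descriptive operations $\bigcup_{\Phi}$, $\bigcap_{\Phi}$. Two structural facts will do the heavy lifting. First, the descriptive-union axiom $E\,\delta_{\Phi}\,(F\cup G)\Leftrightarrow E\,\delta_{\Phi}\,F\vee E\,\delta_{\Phi}\,G$ makes $\delta_{\Phi}$ monotone, so that whenever $A'\subseteq E$, $B'\subseteq F$ and $A'\,\delta_{\Phi}\,B'$, one already gets $E\,\delta_{\Phi}\,F$. Second, the unit interval $I=[0,1]$, with its standard structure, is a connected descriptive proximity space, just as it is connected in the Efremovic setting. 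The single genuinely new ingredient, which I isolate as a lemma, is that two descriptively near points are always joined by a descriptive proximal path.

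For the lemma, suppose $\{p\}\,\delta_{\Phi}\,\{q\}$, i.e. $\Phi(p)=\Phi(q)$. Define $h\colon I\to X$ by $h(t)=p$ for $t\in[0,1)$ and $h(1)=q$. For any nonempty $A,B\subseteq I$ the sets $h(A),h(B)$ are nonempty subsets of $\{p,q\}$, so $\mathcal{Q}(h(A))=\mathcal{Q}(h(B))=\{\Phi(p)\}$ and hence $h(A)\,\delta_{\Phi}\,h(B)$; thus $h$ is a dpc-map and a descriptive proximal path from $p$ to $q$. Together with the constant path, the reversal $t\mapsto 1-t$, and the Gluing Lemma for Descriptive Proximity, this shows that ``being joinable by a descriptive proximal path'' is an equivalence relation on $X$.

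For the forward implication, assume $X$ is path-connected and let $E,F$ be nonempty with $E\cup F=X$. Choose $e_{0}\in E$, $f_{0}\in F$ and a descriptive proximal path $g\colon I\to X$ with $g(0)=e_{0}$ and $g(1)=f_{0}$. Put $A=g^{-1}(E)$ and $B=g^{-1}(F)$; then $A\cup B=I$ with $0\in A$ and $1\in B$, so both are nonempty. Connectedness of $I$ gives $A\,\delta_{\Phi}\,B$, and since $g$ is a dpc-map we obtain $g(A)\,\delta_{\Phi}\,g(B)$. As $g(A)\subseteq E$ and $g(B)\subseteq F$, monotonicity yields $E\,\delta_{\Phi}\,F$, so $X$ is connected.

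For the converse I argue by contraposition. Suppose $X$ is connected but not path-connected, and pick $x_{0},x_{1}$ with no descriptive proximal path between them. Let $E$ be the descriptive path component of $x_{0}$ and $F=X\setminus E$; then $E\cup F=X$, $x_{0}\in E$ and $x_{1}\in F$, so both are nonempty. Connectedness forces $E\,\delta_{\Phi}\,F$, i.e. $\mathcal{Q}(E)\cap\mathcal{Q}(F)\neq\emptyset$, so there are $e\in E$ and $f\in F$ with $\Phi(e)=\Phi(f)$. By the lemma there is a descriptive proximal path from $e$ to $f$; gluing it, after affine reparametrisation (a descriptive proximity isomorphism), to a path from $x_{0}$ to $e$ produces a path from $x_{0}$ to $f$, whence $f\in E$, contradicting $f\in F$. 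Therefore $X$ is path-connected. The main obstacle is precisely this converse: in ordinary topology connectedness does not imply path-connectedness, and what rescues the statement here is the lemma, since descriptive nearness of $E$ and $F$ is witnessed by an actual pair of points sharing a feature vector, and such a pair is automatically path-joinable. Making airtight that the path component $E$ cannot be descriptively near a point outside it is the delicate point, and it rests on the monotonicity of $\delta_{\Phi}$ together with the Gluing Lemma for Descriptive Proximity.
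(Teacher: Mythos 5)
The paper does not actually prove this statement; it is quoted from the preprint \cite{MelihKaraca:2023}, so there is no in-paper argument to compare against and I judge your proposal on its own merits. Your converse direction (connected $\Rightarrow$ path-connected) is the right idea and essentially complete: the lemma that two points sharing a feature vector are joined by the ``jump'' path $h(t)=p$ for $t<1$, $h(1)=q$ is exactly what makes this direction work in the descriptive setting, and the path-component argument then closes it. (The concatenation can even be done without the Gluing Lemma: replacing the terminal value $e$ of a path by $f$ with $\Phi(e)=\Phi(f)$ changes no set $\mathcal{Q}(g(A))$, so the modified map is automatically a dpc-map.)

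The gap is in the forward direction. Everything there rests on your unproved assertion that $I=[0,1]$ ``with its standard structure'' is a connected \emph{descriptive} proximity space. Unlike in the Efremovic/metric case this is not automatic: it depends on which probe functions one places on $I$, and neither you nor the paper fixes them. If the probe on $I$ is the identity (arguably the most natural choice), then $A\,\delta_{\Phi}^{'}\,B$ holds iff $A\cap B\neq\emptyset$, so $[0,1/2]$ and $(1/2,1]$ cover $I$ without being descriptively near, and $I$ is \emph{not} connected. Worse, with that choice the implication you are trying to prove is false: take $X=\{a,b\}$ with $\Phi(a)\neq\Phi(b)$ and $g(t)=a$ for $t\leq 1/2$, $g(t)=b$ for $t>1/2$; since disjoint subsets of $I$ are then never $\delta_{\Phi}^{'}$-near, $g$ is vacuously a dpc-map, so $X$ is path-connected, yet $\{a\}\cup\{b\}=X$ with $\{a\}\,\underline{\delta_{\Phi}}\,\{b\}$, so $X$ is not connected. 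Your forward implication is therefore only as strong as the hidden hypothesis that the descriptive proximity chosen on $I$ makes $I$ connected; you must either prove that for the specific $\delta_{\Phi}^{'}$ in use or state it as an explicit assumption, and without it this half of the theorem does not go through.
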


\subsection{Proximal Fibration and Descriptive Proximal Fibration}
\label{subsec:subsec3}

\quad Assume that $(X,\delta_{1})$ and $(Y,\delta_{2})$ are two proximal spaces. Then one can construct a proximity relation $\delta$ on the \textit{proximal mapping space} \[Y^{X} = \{\alpha : X \rightarrow Y \ | \ \alpha \ \text{is a pc-map}\}\] as follows\cite{PeiRen:1985,MelihKaraca:2023}: For any $\{\alpha_{i}\}_{i \in I}$, $\{\beta_{j}\}_{j \in J} \in Y^{X}$, $\{\alpha_{i}\}_{i \in I}$ is near $\{\beta_{j}\}_{j \in J}$ with respect to $\delta$ provided that $\alpha_{i}(E)$ is near $\beta_{j}(F)$ with respect to $\delta_{2}$ when $E$ is near $F$ with respect to $\delta_{1}$ for $E$, $F \in 2^{X}$. 

\quad A map $F : (X,\delta_{1}) \rightarrow (Z^{Y},\delta^{'})$ is a pc-map if the fact $E$ is near $F$ with respect to $\delta_{1}$ implies that $F(E)$ is near $F(F)$ with respect to $\delta^{'}$ for any $E$, $F \in 2^{X}$.

\begin{theorem}\cite{MelihKaraca:2023}
	Given three proximity spaces $(X,\delta_{1})$, $(Y,\delta_{2})$, and $(Z,\delta_{3})$, we have that $\alpha : (Z^{X \times Y},\delta_{4}) \rightarrow ((Z^{Y})^{X},\delta_{5})$ and $\beta : ((Y \times Z)^{X},\delta_{6}) \rightarrow (Z^{X},\delta_{7})$ are proximity isomorphisms. 
\end{theorem}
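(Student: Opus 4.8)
The plan is to build $\alpha$ and $\beta$ from the classical exponential correspondences (currying and the product law) and then verify, for each, that it is well-defined on pc-maps, bijective, and a pc-map whose inverse is again a pc-map. For $\alpha$ I would set $\alpha(f)(x)(y) = f(x,y)$ for $f \in Z^{X \times Y}$, $x \in X$, $y \in Y$, with candidate inverse $\alpha^{-1}(g)(x,y) = g(x)(y)$. For $\beta$, reading its codomain as the product mapping space $Y^{X} \times Z^{X}$, I would set $\beta(f) = (p_{Y} \circ f, p_{Z} \circ f)$, where $p_{Y}$ and $p_{Z}$ are the (proximally continuous) projections off $Y \times Z$, with inverse $\beta^{-1}(g,h)(x) = (g(x),h(x))$.

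First I would check that these formulas land in the stated mapping spaces. For $\alpha$, fixing $x \in X$, the assignment $y \mapsto f(x,y)$ is the restriction of the pc-map $f$ along the pc-inclusion $y \mapsto (x,y)$, hence a pc-map, so $\alpha(f)(x) \in Z^{Y}$; and $x \mapsto \alpha(f)(x)$ is a pc-map $X \to Z^{Y}$ by unwinding the mapping-space proximity on $Z^{Y}$ against the product proximity on $X \times Y$. The dual bookkeeping shows $\alpha^{-1}(g) \in Z^{X \times Y}$. For $\beta$, $p_{Y} \circ f$ and $p_{Z} \circ f$ are composites of pc-maps, and $x \mapsto (g(x),h(x))$ is a pc-map into the product by the definition of the product proximity. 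Bijectivity is then immediate from the defining formulas, since $\alpha^{-1} \circ \alpha$, $\alpha \circ \alpha^{-1}$ (and likewise for $\beta$) reduce to identities pointwise.

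The substantive step is proximal continuity of $\alpha$ and $\alpha^{-1}$ (and of $\beta$, $\beta^{-1}$), and this is where I expect the main difficulty. Here I would unwind the three layers of nearness involved: near families in $((Z^{Y})^{X},\delta_{5})$ reduce, by the mapping-space definition, to a condition indexed by near pairs $A$, $C$ in $X$ asserting nearness of the evaluated families in $Z^{Y}$, which in turn reduces to a condition indexed by near pairs $B$, $D$ in $Y$ asserting nearness of images in $(Z,\delta_{3})$. I would then show this iterated condition is logically equivalent to the single condition defining $\delta_{4}$ on $Z^{X \times Y}$, namely that $f(A \times B)$ is near $g(C \times D)$ in $Z$ exactly when $A \times B$ is near $C \times D$ in $X \times Y$ --- which, by the product proximity, means precisely $A\,\delta_{1}\,C$ and $B\,\delta_{2}\,D$. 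Matching these quantifier patterns in both directions yields that $\alpha$ both preserves and reflects nearness, so $\alpha$ and $\alpha^{-1}$ are pc-maps.

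For $\beta$ the analogous but shorter computation applies: a family in $((Y \times Z)^{X},\delta_{6})$ is near another iff, for near $A$, $C$ in $X$, the images are near in $Y \times Z$, which by the product proximity splits as simultaneous nearness in $Y$ and in $Z$ --- exactly the condition defining nearness of the paired families in $(Y^{X} \times Z^{X},\delta_{7})$ under the product proximity on mapping spaces. The main obstacle throughout is the careful handling of the families $\{\alpha_{i}\}_{i \in I}$, $\{\beta_{j}\}_{j \in J}$ appearing in the mapping-space proximity and the nested quantifiers over near subsets; the key technical fact I would isolate is that evaluation and partial application preserve nearness, so that the product proximity on $X \times Y$ factors correctly through the iterated exponential. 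Once the equivalence of nearness conditions is established symmetrically, both isomorphism assertions follow.
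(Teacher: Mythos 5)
The paper does not actually prove this statement: it is recalled in the Preliminaries and attributed to the preprint \cite{MelihKaraca:2023}, so there is no in-paper argument to compare yours against. On its own terms, your overall plan is the natural one --- realize $\alpha$ as currying, read the codomain of $\beta$ as the product mapping space $Y^{X}\times Z^{X}$ (surely the intended target, since post-composition with a projection onto $Z^{X}$ could not be injective), check well-definedness and bijectivity, and then verify proximal continuity of the four maps. The well-definedness and bijectivity parts are fine; partial application along the pc-embedding $y\mapsto(x,y)$ does land in $Z^{Y}$, and the set-theoretic inverses are the obvious ones.

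The gap is that the step you yourself flag as substantive --- ``matching the quantifier patterns'' --- is exactly where the argument can fail, and you do not carry it out. Unwinding the mapping-space proximity, nearness of the image families $\widetilde{f}(A)$ and $\widetilde{g}(C)$ in $Z^{Y}$ is a condition indexed by the \emph{individual members} of those families: for all $a\in A$, $c\in C$ and all $B\,\delta_{2}\,D$ one needs $f(\{a\}\times B)\,\delta_{3}\,g(\{c\}\times D)$. By contrast, nearness of $f$ and $g$ in $Z^{X\times Y}$ only yields $f(A\times B)\,\delta_{3}\,g(C\times D)$ for near rectangles, and singletons $\{a\}\subset A$, $\{c\}\subset C$ of near sets need not themselves be near (take $A=C=\mathbb{R}$ with the metric proximity and $a=0$, $c=1$), so the pointwise condition does not follow from the collective one; the reverse implication is the easy direction, since nearness is preserved under enlarging sets. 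Whether the two conditions coincide therefore hinges on how the family-nearness in the mapping-space proximity is read (member-by-member versus via unions of images), and on whether $\delta_{4}$ quantifies over arbitrary near subsets of $X\times Y$ or only over rectangles, which is all the product proximity in this paper actually specifies. Until you fix a reading and prove the equivalence in both directions under it, the claim that $\alpha$ ``both preserves and reflects nearness'' is an assertion, not a proof; the same issue, in milder form, affects $\beta$.
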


\quad Given two proximity spaces $(X,\delta_{1})$ and $(Y,\delta_{2})$, $e_{X,Y} : (Y^{X} \times X,\delta) \rightarrow (Y,\delta_{2})$, $e(\alpha,x) = \alpha(x)$ is called the \textit{proximal evaluation map}, and it is a pc-map\cite{MelihKaraca:2023}. Proximal fiber bundles and proximal fibrations are introduced in \cite{PetersTane:2022}, and the latter is improved in \cite{MelihKaraca:2023}.

\begin{definition}\cite{PetersTane:2022,MelihKaraca:2023}
	Let $p : (X,\delta) \rightarrow (X^{'},\delta^{'})$ be a pc-map. Given any pc-map $f : (X^{''},\delta^{''}) \rightarrow (X,\delta)$ and any proximal homotopy $H : (X^{''} \times I,\delta_{1}) \rightarrow (X^{'},\delta^{'})$ such that $p \circ f = H \circ i_{0}$, $p$ has the \textit{proximal homotopy lifting property (PHLP) with respect to $(X^{''},\delta^{''})$} provided that there is a proximal homotopy $H^{'} : (X^{''} \times I,\delta_{1}) \rightarrow (X,\delta)$ with $H^{'}(x^{''},0) = f(x^{''})$ and $p \circ H^{'}(x^{''},t) = H(x^{''},t)$.
	\begin{displaymath}
		\xymatrix{
			X^{''} \ar[r]^{f} \ar[d]_{i_{0}} &
			X \ar[d]^{p} \\
			X'' \times I \ar[r]_{H} \ar@{.>}[ur]^{H^{'}} & X^{'}. }
	\end{displaymath}
\end{definition}

\begin{definition}\cite{PetersTane:2022,MelihKaraca:2023}
	Let $p : (X,\delta) \rightarrow (X^{'},\delta^{'})$ be a pc-map. If it has the PHLP for any $(X^{''},\delta^{''})$, then it is called a \textit{proximal fibration}.
\end{definition}

\quad Each of two pc-maps $p_{1} : (X^{I},\delta^{'}) \rightarrow (X,\delta_{1})$  and $p_{2} : (X^{I},\delta^{'}) \rightarrow (X \times X,\delta^{'})$, defined by $p_{1}(\alpha) = \alpha(0)$ and $p_{2}(\alpha) = (\alpha(0),\alpha(1))$, respectively, is one of examples of a proximal fibration. They are called \textit{proximal path fibrations}\cite{MelihKaraca:2023}. Proximal fibrations have useful properties as follows\cite{MelihKaraca:2023}: Let $p_{1}$ and $p_{2}$ be two proximal fibration, $f$ a pc-map. Then $p_{1} \circ p_{2}$, $p_{1} \times p_{2}$, and the pullback $f^{\ast}p_{1}$ of the proximal fibration $p_{1}$ are proximal fibrations as well. Let $(Z,\delta_{3})$ be a proximity space and $p : (X,\delta_{1}) \rightarrow (Y,\delta_{2})$ a proximal fibration. Then the map $p_{\ast} : (X^{Z},\delta_{3}^{'}) \rightarrow (Y^{Z},\delta_{3}^{''})$ is also a proximal fibration.

\quad Consider the descriptive case of proximal mapping spaces and proximal fibrations. Given two descriptive proximity spaces $(X,\delta_{\Phi}^{1})$ and $(Y,\delta_{\Phi}^{2})$, a descriptive proximity relation $\delta_{\Phi}$ on the \textit{descriptive proximal mapping space} \[Y^{X} = \{\alpha : X \rightarrow Y \ | \ \alpha \ \text{is a dpc-map}\}\] is defined as follows\cite{PeiRen:1985,MelihKaraca:2023}: For any $\{\alpha_{i}\}_{i \in I}$, $\{\beta_{j}\}_{j \in J} \in Y^{X}$, $\{\alpha_{i}\}_{i \in I}$ is descriptively near $\{\beta_{j}\}_{j \in J}$ with respect to $\delta_{\Phi}$ provided that $\alpha_{i}(E)$ is descriptively near $\beta_{j}(F)$ with respect to $\delta_{\Phi}^{2}$ when $E$ is descriptively near $F$ with respect to $\delta_{\Phi}^{1}$ for $E$, $F \in 2^{X}$. 

\quad A map $f : (X,\delta_{\Phi}^{1}) \rightarrow (Z^{Y},\delta_{\Phi}^{'})$ is a dpc-map if the fact $E$ is descriptively near $F$ with respect to $\delta_{\Phi}^{1}$ implies that $f(E)$ is descriptively near $f(F)$ with respect to $\delta_{\Phi}^{'}$ for any $E$, $F \in 2^{X}$.

\begin{theorem}\cite{MelihKaraca:2023}
	Given three descriptive proximity spaces $(X,\delta_{\Phi}^{1})$, $(Y,\delta_{\Phi}^{2})$, and $(Z,\delta_{\Phi}^{3})$, $\alpha : (Z^{X \times Y},\delta_{\Phi}^{4}) \rightarrow ((Z^{Y})^{X},\delta_{\Phi}^{5})$ and $\beta : ((Y \times Z)^{X},\delta_{\Phi}^{6}) \rightarrow (Z^{X},\delta_{\Phi}^{7})$ are descriptive proximity isomorphisms. 
\end{theorem}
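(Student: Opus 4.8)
The plan is to treat both $\alpha$ and $\beta$ as the descriptive-proximal analogues of the classical exponential (adjunction) laws, and to mirror step by step the proof of the corresponding statement for ordinary proximity spaces proved earlier, replacing every occurrence of ``near'' by ``descriptively near'' and every use of the product proximity by the descriptive product proximity. Concretely, I would define $\alpha$ by currying: for a dpc-map $f : X \times Y \to Z$, set $\alpha(f) = \widehat{f}$, where $\widehat{f}(x) : Y \to Z$ is given by $\widehat{f}(x)(y) = f(x,y)$. The first task is to check that $\alpha$ is well defined as a map into $(Z^{Y})^{X}$, that is, that each $\widehat{f}(x)$ is a dpc-map $Y \to Z$ and that $x \mapsto \widehat{f}(x)$ is a dpc-map $X \to Z^{Y}$. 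Both follow by unwinding the definition of the descriptive product proximity on $X \times Y$ together with the definition of the descriptive proximal mapping-space proximity: if $E \, \delta_{\Phi}^{1} \, F$ in $X$, then for any $A \, \delta_{\Phi}^{2} \, B$ in $Y$ one has $(E \times A) \, \delta_{\Phi} \, (F \times B)$ in $X \times Y$, whence $f(E \times A)$ is descriptively near $f(F \times B)$, which is exactly what is needed for $\widehat{f}(E)$ to be descriptively near $\widehat{f}(F)$ in $Z^{Y}$.

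Next I would exhibit the inverse. Uncurrying supplies a set-theoretic two-sided inverse $\alpha^{-1}$, sending $g \in (Z^{Y})^{X}$ to the map $(x,y) \mapsto g(x)(y)$; the bijectivity of $\alpha$ on underlying sets is the usual exponential law and requires no proximity input. The substantive point is that $\alpha^{-1}$ is again a dpc-map, which I would prove by running the previous nearness computation in reverse: descriptive nearness of two families $\{\widehat{f}_{i}\}$, $\{\widehat{g}_{j}\}$ in $(Z^{Y})^{X}$ unpacks, through the iterated mapping-space proximity, into descriptive nearness of $f_{i}$ and $g_{j}$ on product sets $E \times A$, and the descriptive product proximity then lets me recover descriptive nearness over arbitrary descriptively-near subsets of $X \times Y$. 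Establishing that this reduction is a genuine equivalence (both implications) is where the argument must be handled with care. For $\beta$ I would proceed identically, defining it through the natural pairing and projection maps so that it realises the distributivity of the exponential over a product; its inverse is assembled from the two coordinate maps, and the Gluing Lemma for Descriptive Proximity guarantees that the assembled map is again a dpc-map.

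I expect the main obstacle to be the verification that currying preserves descriptive nearness in \emph{both} directions, because descriptive nearness is not governed by the ambient set operations but by the overlap of feature-vector descriptions, and the descriptive intersection $\bigcap_{\Phi}$ and union $\bigcup_{\Phi}$ differ from the ordinary ones. The delicate step is therefore to show that ``$\{f_{i}\}$ is descriptively near $\{g_{j}\}$ in $Z^{X \times Y}$'' is equivalent to ``$\{\widehat{f}_{i}\}$ is descriptively near $\{\widehat{g}_{j}\}$ in $(Z^{Y})^{X}$'', which forces a reduction of descriptive nearness of product-indexed images to a conjunction of descriptive nearness statements on the two factors, precisely the content of the descriptive product proximity. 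Once this reduction is seen to hold with logical equivalence, $\alpha$ and $\beta$ together with their inverses are all dpc, and hence each is a descriptive proximity isomorphism, completing the proof.
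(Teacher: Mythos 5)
This theorem is not proved in the present paper at all: it is recalled verbatim from the cited preprint \cite{MelihKaraca:2023} (it is the descriptive analogue of the earlier quoted Theorem 2.5 on ordinary proximal mapping spaces, which is likewise stated without proof). There is therefore no in-paper argument to compare yours against; I can only assess your proposal on its own terms. Your overall strategy --- curry/uncurry for $\alpha$, check that both directions preserve descriptive nearness using the mapping-space proximity and the product proximity, and treat $\beta$ via the pairing of coordinate maps --- is the standard exponential-law argument and is surely what the cited source intends. Two points, however, are genuine gaps rather than routine omissions.

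First, the reduction you yourself flag as ``delicate'' is not actually carried out, and it is the only place where something could fail. The descriptive product proximity, as defined in Section 2, assigns a nearness relation only to \emph{rectangles} $E_{1}\times E_{2}$ and $F_{1}\times F_{2}$; but the mapping-space proximity on $Z^{X\times Y}$ quantifies over arbitrary descriptively near subsets $C, D \in 2^{X\times Y}$. To conclude that descriptive nearness of $\{\widehat{f}_{i}\}$ and $\{\widehat{g}_{j}\}$ in $(Z^{Y})^{X}$ implies descriptive nearness of $\{f_{i}\}$ and $\{g_{j}\}$ in $Z^{X\times Y}$, you must either (a) state how the product proximity extends from rectangles to arbitrary subsets (e.g.\ via finite covers by rectangles, as in Leader's construction) and then push the nearness statement through that extension, or (b) restrict the quantifier to rectangles by fiat. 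Without one of these, the equivalence ``$\{f_{i}\}$ near $\{g_{j}\}$ in $Z^{X\times Y}$ iff $\{\widehat{f}_{i}\}$ near $\{\widehat{g}_{j}\}$ in $(Z^{Y})^{X}$'' is asserted but not proved, and this equivalence \emph{is} the theorem.

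Second, the map $\beta$ as printed goes from $(Y\times Z)^{X}$ to $Z^{X}$, which (being post-composition with a projection) is not even injective unless $Y$ is a singleton, so it cannot be a descriptive proximity isomorphism as literally stated. You have silently read it as the distributivity isomorphism $(Y\times Z)^{X}\cong Y^{X}\times Z^{X}$; that is almost certainly the intended statement, but you should say explicitly that you are correcting the codomain, since otherwise your proof establishes a different claim from the one in the theorem. With the codomain corrected, your use of the two coordinate maps and the Gluing Lemma for the inverse is fine, though the Gluing Lemma is not really needed there --- the inverse is the pairing $(u,v)\mapsto (u,v)\colon x\mapsto(u(x),v(x))$, whose descriptive proximal continuity follows directly from the definition of the product proximity on $Y\times Z$.
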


\quad Given two descriptive proximity spaces $(X,\delta_{\Phi}^{1})$ and $(Y,\delta_{\Phi}^{2})$, \[e_{X,Y} : (Y^{X} \times X,\delta_{\Phi}) \rightarrow (Y,\delta_{\Phi}^{2}),\]
defined by $e(\alpha,x) = \alpha(x)$, is called the \textit{descriptive proximal evaluation map}, and it is a dpc-map\cite{MelihKaraca:2023}. Descriptive proximal fiber bundles and descriptive proximal fibrations are introduced in \cite{PetersTane:2022}, and the latter is improved in \cite{MelihKaraca:2023}.

\begin{definition}\cite{PetersTane:2022,MelihKaraca:2023}
	Let $p : (X,\delta_{\Phi}) \rightarrow (X^{'},\delta_{\Phi}^{'})$ be a dpc-map. Given any dpc-map $f : (X^{''},\delta_{\Phi}^{''}) \rightarrow (X,\delta_{\Phi})$ and any descriptive proximal homotopy \linebreak$H : (X^{''} \times I,\delta_{\Phi}^{1}) \rightarrow (X^{'},\delta_{\Phi}^{'})$ such that $p \circ f = H \circ i_{0}$, $p$ has the \textit{descriptive proximal homotopy lifting property (DPHLP) with respect to $(X^{''},\delta_{\Phi}^{''})$} provided that there is a descriptive proximal homotopy $H^{'} : (X^{''} \times I,\delta_{\Phi}^{1}) \rightarrow (X,\delta_{\Phi})$ with $H^{'}(x^{''},0) = f(x^{''})$ and $p \circ H^{'}(x^{''},t) = H(x^{''},t)$.
	\begin{displaymath}
		\xymatrix{
			X^{''} \ar[r]^{f} \ar[d]_{i_{0}} &
			X \ar[d]^{p} \\
			X'' \times I \ar[r]_{H} \ar@{.>}[ur]^{H^{'}} & X^{'}. }
	\end{displaymath}
\end{definition}

\begin{definition}\cite{PetersTane:2022,MelihKaraca:2023}
	Let $p : (X,\delta_{\Phi}) \rightarrow (X^{'},\delta_{\Phi}^{'})$ be a dpc-map. If it has the DPHLP for any $(X^{''},\delta_{\Phi}^{''})$, then it is called a \textit{descriptive proximal fibration}.
\end{definition}

\quad Each of dpc-maps \[p_{1} : (X^{I},\delta_{\Phi}^{'}) \rightarrow (X,\delta_{\Phi}^{1})\] and \[p_{2} : (X^{I},\delta_{\Phi}^{'}) \rightarrow (X \times X,\delta_{\Phi}^{'}),\] defined by $p_{1}(\alpha) = \alpha(0)$ and $p_{2}(\alpha) = (\alpha(0),\alpha(1))$, respectively, is one of examples of a descriptive proximal fibration. They are called \textit{descriptive proximal path fibrations}\cite{MelihKaraca:2023}. Descriptive proximal fibrations have useful properties as follows\cite{MelihKaraca:2023}: Let $p_{1}$ and $p_{2}$ be two descriptive proximal fibration, $f$ a dpc-map. Then $p_{1} \circ p_{2}$, \linebreak$p_{1} \times p_{2}$, and the pullback $f^{\ast}p_{1}$ of the descriptive proximal fibration $p_{1}$ are descriptive proximal fibrations as well. For any descriptive proximity space $(Z,\delta_{3})$, the map $p_{\ast} : (X^{Z},\delta_{\Phi}^{3'}) \rightarrow (Y^{Z},\delta_{\Phi}^{3''})$ is a descriptive proximal fibration. Let $(Z,\delta_{\Phi}^{3})$ be a descriptive proximity space and $p : (X,\delta_{\Phi}^{1}) \rightarrow (Y,\delta_{\Phi}^{2})$ a descriptive proximal fibration. Then $p_{\ast} : (X^{Z},\delta_{\Phi}^{3'}) \rightarrow (Y^{Z},\delta_{\Phi}^{3''})$ is also a descriptive proximal fibration.

\section{Proximal TC Numbers By Using Proximal Schwarz Genus}
\label{sec:1}

\quad Given two proximal paths $\alpha_{1} : [0,1] \rightarrow X$ and $\alpha_{2} : [0,1] \rightarrow X$ in $X$, they are said to be near provided that for any $E$, $F \in 2^{I}$, $E \delta^{'} F$ implies that $\alpha_{1}(E) \delta \alpha_{2}(F)$, where $\delta^{'}$ is a proximity on $I$ and $\delta$ is a proximity on $X$. The map $\pi : PX \rightarrow X \times X$ with $\pi(\alpha) = (\alpha(0),\alpha(1))$ is called a proximal path fibration,  where $PX$ is defined by the set $\{\alpha \ | \ \alpha : [0,1] \rightarrow X \ \ \text{is a proximal path}\}$.

\begin{definition}
	Let $p : (X,\delta) \rightarrow (X^{'},\delta^{'})$ be a proximal fibration. Then the proximal Schwarz genus of $p$ is the possible minimum integer $m > 0$ if $X^{'}$ has a cover $\{U_{1},U_{2},\cdots,U_{m}\}$, i.e., $X^{'}$ can be written as the union of $U_{1}$, $U_{2}$, $\cdots$, $U_{m}$, such that there exists a pc-map $s_{i} : U_{i} \rightarrow X$ with the property $p \circ s_{i} = 1_{U_{i}}$ for each $i \in \{1,\cdots,m\}$.
\end{definition}

The proximal Schwarz genus of $p : (X,\delta) \rightarrow (X^{'},\delta^{'})$ is denoted by genus$_{\delta,\delta^{'}}(p)$.

\begin{definition}
	Let $(X,\delta)$ be a connected proximity space and the pc-map \linebreak$\pi : PX \rightarrow X \times X$, $\pi(\alpha) = (\alpha(0),\alpha(1))$, a proximal path fibration. Then the proximal topological complexity of $X$ (or simply proximal complexity), denoted by TC$(X,\delta)$, is the proximal Schwarz genus of $\pi$.
\end{definition}

\quad Throughout the article, we note that the proximity space $X$ is considered connected whenever the topological complexity number is computed. Since TC is a homotopy invariant \cite{Farber:2003}, the proximal topological complexity is a proximal homotopy invariant. Indeed, a topological invariant is a proximity invariant \cite{NaimpallyWarrack:1970}. 

\begin{example}
	Let $([1,2],\delta)$ be a proximity space. Define the map \[s : [1,2] \times [1,2] \rightarrow P[1,2]\] as follows. For any near subsets $E_{1} \times E_{2}$ and $F_{1} \times F_{2}$ in $[1,2] \times [1,2]$, $s(E_{1},E_{2})$ and $s(F_{1},F_{2})$ are near. It follows that for a point $(E_{1},E_{1})$, $s(E_{1},E_{2})$ is a (proximally continuous) proximal path between $E_{1}$ and $E_{2}$. For a proximal fibration \[\pi : P[1,2] \rightarrow [1,2] \times [1,2]\] with $\pi(\alpha) = (\alpha(0),\alpha(1))$, we find that $\pi \circ s$ is the identity map. This shows that TC$([1,2],\delta) = 1$.
\end{example}

\begin{example}
	Let $\delta$ be a discrete proximity on $X$. Then we have that
	\begin{eqnarray*}
		E \delta F \ \ \Leftrightarrow \ \ E \cap F \neq \emptyset.
	\end{eqnarray*}
    We first show that if $(X,\delta)$ is a discrete proximity space, then it is not a connected proximity space. Suppose that $(X,\delta)$ is a connected proximity space. Then for any subset $E \in 2^{X}$, $E \delta (X-E)$ since $E \cup (X-E) = X$. On the other hand, $E \cap (X-E) = \emptyset$ implies that $E \underline{\delta} (X-E)$. Thus, we have a contradiction. As a result, TC$(X,\delta)$ cannot be computed when $\delta$ is a discrete proximity on $X$.
\end{example}

\begin{theorem}\label{teo1}
	Let $(X,\delta)$ be a proximity space. Then $X$ is proximally contractible if and only if there exists a pc-map (proximally continuous motion planning) \linebreak$s : X \times X \rightarrow PX$.
\end{theorem}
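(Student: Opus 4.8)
The plan is to adapt the classical fact that a space is contractible if and only if its path fibration admits a global section, replacing ordinary continuity by proximal continuity and invoking the Gluing Lemma for Proximity together with the proximal continuity of the evaluation map. Throughout, a ``proximally continuous motion planning'' is understood as a pc-section $s : X \times X \to PX$ of $\pi$, so that $s(a,b)$ is a proximal path with $s(a,b)(0) = a$ and $s(a,b)(1) = b$.

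For the forward implication, suppose $X$ is proximally contractible, so there is a proximal homotopy $G : (X \times I,\delta_{1}) \to (X,\delta)$ with $G(x,0) = x$ and $G(x,1) = x_{0}$ for a fixed $x_{0} \in X$. I would define a candidate section $s : X \times X \to PX$ by concatenating the contraction of the first coordinate to $x_{0}$ with the reversed contraction of the second coordinate, namely
\[
s(a,b)(t) = \begin{cases} G(a, 2t), & t \in [0, \tfrac12], \\ G(b, 2 - 2t), & t \in [\tfrac12, 1]. \end{cases}
\]
First I would check that each $s(a,b)$ is a genuine proximal path: the two branches are pc-maps, being composites of $G$ with the affine, hence proximally continuous, reparametrizations $t \mapsto 2t$ and $t \mapsto 2 - 2t$, and they agree at $t = \tfrac12$ since both equal $G(a,1) = G(b,1) = x_{0}$. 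The Gluing Lemma for Proximity then yields that $s(a,b) : I \to X$ is a pc-map with $s(a,b)(0) = a$ and $s(a,b)(1) = b$, so $s$ is a section of $\pi$.

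The substantive step is to verify that $s$ itself is a pc-map into the proximal mapping space $PX$. Here I would take near subsets $E_{1} \times E_{2}$ and $F_{1} \times F_{2}$ of $X \times X$ and show, using the mapping-space proximity, that whenever $A \delta^{'} B$ in $I$ the images $s(E_{1},E_{2})(A)$ and $s(F_{1},F_{2})(B)$ are near in $X$; branch by branch, this reduces through the defining formula to the proximal continuity of $G$ on the corresponding near product subsets of $X \times I$. This reconciliation of the product proximity on $X \times X$ with the mapping-space proximity on $PX$ is the step I expect to be the main obstacle, since it is where the proximal framework departs most sharply from the pointwise topological argument.

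For the converse, assume a pc-section $s : X \times X \to PX$ of $\pi$ exists. Fixing $x_{0} \in X$, I would define $G : X \times I \to X$ by $G(x,t) = s(x,x_{0})(t)$, exhibited as the composite of the pc-map $(x,t) \mapsto ((x,x_{0}),t)$, followed by $s \times 1_{I}$, and then the proximal evaluation map $e : PX \times I \to X$. Since each factor is a pc-map (the evaluation map by the facts recalled in Section \ref{subsec:subsec3}, and the first map because $x \mapsto (x,x_{0})$ is proximally continuous for the product proximity), $G$ is a pc-map. As $s$ is a section, $G(x,0) = s(x,x_{0})(0) = x$ and $G(x,1) = s(x,x_{0})(1) = x_{0}$, so $G$ is a proximal homotopy from $1_{X}$ to the constant map at $x_{0}$, establishing that $X$ is proximally contractible.
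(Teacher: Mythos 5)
Your proposal follows essentially the same route as the paper: for the forward direction you concatenate the two contraction paths through the base point (the paper phrases this as composing $\beta$ with $\alpha^{-1}$), and for the converse you define the contraction by evaluating the section, $G(x,t) = s(x,x_0)(t)$, just as the paper takes $H(x,t) = s(a_0,x)(t)$. If anything you are more careful than the paper, which disposes of the proximal continuity of $s$ with a one-line appeal to compositions of pc-maps, whereas you explicitly invoke the Gluing Lemma for the concatenation and isolate the reconciliation of the product proximity on $X \times X$ with the mapping-space proximity on $PX$ as the step still requiring verification.
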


\begin{proof}
	Let $H : X \times I \rightarrow X$ be a proximal homotopy with $H(x,0) = a_{0}$ and $H(x,1) = x$ for any $x \in X$, where $a_{0}$ is a fixed point in $X$. Assume that $a$ and $b$ are any points in $X$. Then we can construct a proximal path from $a$ to $b$ as follows. Consider the proximal paths $\alpha : [0,1] \rightarrow X$ with $\alpha(t) = H(a,t)$ (from $a_{0}$ to $a$) and $\beta : [0,1] \rightarrow X$ with $\beta(t) = H(b,t)$ (from $a_{0}$ to $b$). The composition of $\beta$ and $\alpha^{-1}$ gives a proximal path from $a$ to $b$. Since the composition of two pc-maps is continuous, it follows that for any points $a$ and $b$ in $X$, there is a pc-map from $a$ to $b$ by using the proximally contraction homotopy.
	
	Conversely, let $s : X \times X \rightarrow PX$ be a pc-map. Define the proximal map
	\begin{eqnarray*}
		&&H : X \times I \longrightarrow X \\
		&&\hspace*{0.7cm} (x,t) \longmapsto H(x,t) = s(a_{0},x)(t)
	\end{eqnarray*}
for $x \in X$ and $t \in I$. Since $s$ is a pc-map, $H$ is a pc-map. We also find that
\begin{eqnarray*}
	H(x,0) = s(a_{0},x)(0) = a_{0} \ \ : \ \ \text{constant map}
\end{eqnarray*}
and
\begin{eqnarray*}
	H(x,1) = s(a_{0},x)(1) = x = 1_{X}(x) \ \ : \ \ \text{identity map}.
\end{eqnarray*}
Thus, $H$ is a proximal homotopy between the constant map and the identity map. As a result, $X$ is proximally contractible.
\end{proof}

\quad We immediately have the following result from Theorem \ref{teo1}.
\begin{corollary}\label{cor1}
	Let $(X,\delta)$ be a proximity space. Then TC$(X,\delta) = 1$ if and only if $X$ is proximally contractible.
\end{corollary}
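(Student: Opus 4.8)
The plan is to derive the corollary directly from Theorem \ref{teo1} by unwinding the definition of the proximal Schwarz genus in the extremal case $m = 1$. First I would recall that, by definition, TC$(X,\delta)$ is the proximal Schwarz genus genus$_{\delta,\delta^{'}}(\pi)$ of the proximal path fibration $\pi : PX \rightarrow X \times X$, $\pi(\alpha) = (\alpha(0),\alpha(1))$. According to the definition of the genus, this number equals the least positive integer $m$ for which $X \times X$ admits a cover $\{U_{1},\cdots,U_{m}\}$ together with pc-maps $s_{i} : U_{i} \rightarrow PX$ satisfying $\pi \circ s_{i} = 1_{U_{i}}$ for each $i$.

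The key observation is then purely a matter of reading off the case $m = 1$. Having genus$_{\delta,\delta^{'}}(\pi) = 1$ means exactly that $X \times X$ is covered by the single set $U_{1} = X \times X$ and that there is a pc-map $s : X \times X \rightarrow PX$ with $\pi \circ s = 1_{X \times X}$. Equivalently, TC$(X,\delta) = 1$ holds precisely when $\pi$ admits a global pc-section, which is nothing but a proximally continuous motion planning $s : X \times X \rightarrow PX$ in the sense of Theorem \ref{teo1}.

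With this identification in hand, both implications are immediate from Theorem \ref{teo1}. If $X$ is proximally contractible, the ``if'' part of Theorem \ref{teo1} produces the global pc-map $s : X \times X \rightarrow PX$, so the one-element cover $\{X \times X\}$ witnesses genus$_{\delta,\delta^{'}}(\pi) = 1$, i.e. TC$(X,\delta) = 1$. Conversely, if TC$(X,\delta) = 1$, then such a global section $s$ exists, and the ``only if'' part of Theorem \ref{teo1} yields that $X$ is proximally contractible.

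The only point that requires care — and which I expect to be the main, albeit minor, obstacle — is verifying that the pc-map $s$ appearing in Theorem \ref{teo1} is genuinely a section of $\pi$, that is, $\pi \circ s = 1_{X \times X}$, so that it matches the condition $p \circ s_{i} = 1_{U_{i}}$ in the definition of the Schwarz genus. This is exactly where the motion-planning reading of $s$ is used: $s(a,b)$ must be a proximal path with $s(a,b)(0) = a$ and $s(a,b)(1) = b$, which is precisely the statement $\pi(s(a,b)) = (a,b)$. I would also remark that the minimum in the genus is attained at $1$ rather than $0$ because the genus is required to be a positive integer, so no separate argument bounding the genus below by $1$ is needed, and the equivalence is complete.
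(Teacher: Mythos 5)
Your argument is correct and is exactly the route the paper takes: the paper derives this corollary immediately from Theorem \ref{teo1} by reading TC$(X,\delta)=1$ as the existence of a global pc-section $s : X \times X \rightarrow PX$ of the proximal path fibration $\pi$. Your added remark that the map $s$ in Theorem \ref{teo1} must genuinely satisfy $\pi \circ s = 1_{X \times X}$ (i.e.\ be a motion planner, not just any pc-map into $PX$) is a fair reading of the parenthetical ``proximally continuous motion planning'' in that theorem's statement and fills in the only detail the paper leaves implicit.
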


\begin{example}
	Let $\delta^{'}$ be an indiscrete proximity on $X^{'}$. Then we have that
	\begin{eqnarray*}
		E \delta^{'} F \ \ \Leftrightarrow \ \ E \neq \emptyset \ \ \text{and} \ \ F \neq \emptyset.
	\end{eqnarray*}
	We first show that every map whose codomain has an indiscrete proximity is a pc-map. Let $(X,\delta)$ be any proximity space. Given a function $f : (X,\delta) \rightarrow (X^{'},\delta^{'})$ for an indiscrete proximity $\delta^{'}$, $E \delta F$ implies that $f(E) \delta^{'} f(F)$ for any nonempty subsets $E$, $F$ in $2^{X}$. Indeed, $E \neq \emptyset$ and $F \neq \emptyset$ imply that $f(E) \neq \emptyset$ and $f(F) \neq \emptyset$, and this means that $f(E) \delta^{'} f(F)$. Therefore, $f$ is a pc-map. We now claim that a nonempty indiscrete proximity space is proximally contractible. Let $(X^{'},\delta^{'})$ be an indiscrete proximity space. Consider the pc-map $H : X^{'} \times I \rightarrow X^{'}$ defined by $H(x,0) = x_{0}$ and $H(x,t) = x$ for a fixed point $x_{0} \in X$ and $t \in (0,1]$. Then $H$ is a proximal homotopy between the constant map and the identity map on $X^{'}$. This leads to the fact that $(X^{'},\delta^{'})$ is proximally contractible. Finally, by Corollary \ref{cor1}, TC$(X^{'},\delta^{'})$ equals $1$ when $\delta^{'}$ is an indiscrete proximity on $X^{'}$.
\end{example}

\begin{figure}[h]
	\centering
	\includegraphics[width=1.00\textwidth]{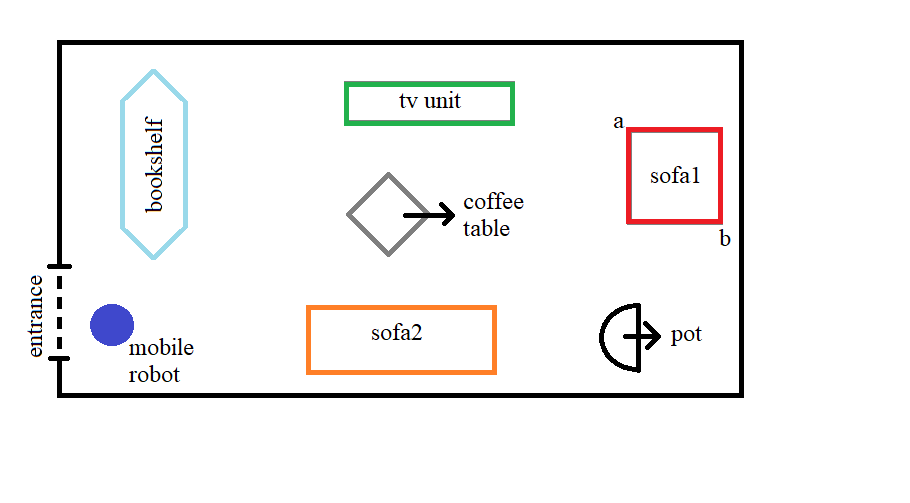}
	\caption{The projection of the lounge of a house on the floor.}
	\label{fig:1}
\end{figure}
\begin{example}\label{ex1}
	Consider the projection of the lounge of a house on the floor as in Figure \ref{fig:1}. First, the task is to compute the level of complexity of the motion required for the mobile robot to clean only around sofa1 in proximity spaces (see Figure \ref{fig:2} a)). The proximal complexity is not $1$ because the sofa1 is not proximal contractible. Indeed, when we consider two proximal paths $\alpha_{1}$ and $\alpha_{2}$ from $a$ to $b$ in sofa1 as the blue and green route, respectively, it is clear that $\alpha_{1}$ is far from (not near) $\alpha_{2}$. Therefore, the proximal motion planning algorithm from $\text{sofa1} \times \text{sofa1}$ to $P\text{sofa1}$ cannot be proximally continuous. This shows that the complexity must be bigger than $1$. Set sofa1 $= W_{1} \cup W_{2}$, where $W_{1}$ and $W_{2}$ are parts of sofa1 from $a$ to $b$ via $d$ and $c$, respectively. Then there exists a pc-map $s_{i} : W_{i} \rightarrow P\text{sofa1}$ for $i = 1,2$ that satisfies $\pi \circ s_{i} = 1_{W_{i}}$. This means that the proximal TC of sofa1, is $2$. Moreover, the result is the same for the bookshelf, the coffee table, the sofa2, the pot, or the tv unit.  
	\begin{figure}[h]
		\centering
		\includegraphics[width=0.40\textwidth]{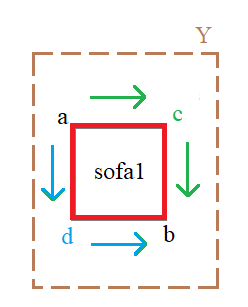}
		\caption{The motion planning for cleaning around the sofa1.}
		\label{fig:2}
	\end{figure}
\end{example}

\begin{proposition}\label{prop1}
	Let $\delta_{1}$ and $\delta_{2}$ be two proximities on $X$ such that $\delta_{1} > \delta_{2}$, i.e., $\delta_{1}$ is finer than $\delta_{2}$. Then for any two proximities $\delta_{1}^{'}$ and $\delta_{2}^{'}$ on $X \times X$ that is produced by $\delta_{1}$ and $\delta_{2}$, respectively, we have that $\delta_{1}^{'} > \delta_{2}^{'}$. 
\end{proposition}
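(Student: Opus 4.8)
The plan is to unwind the two relevant definitions---``finer'' and ``product proximity''---and then verify the required implication directly on product subsets of $X \times X$. Recall that $\delta_{1} > \delta_{2}$ means precisely that $E \delta_{1} F$ implies $E \delta_{2} F$ for all $E, F \in 2^{X}$, and that the product proximity $\delta_{i}^{'}$ on $X \times X$ declares $(E_{1} \times E_{2}) \, \delta_{i}^{'} \, (F_{1} \times F_{2})$ exactly when $E_{1} \delta_{i} F_{1}$ and $E_{2} \delta_{i} F_{2}$.

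First I would take arbitrary subsets $E_{1} \times E_{2}$ and $F_{1} \times F_{2}$ of $X \times X$ with $(E_{1} \times E_{2}) \, \delta_{1}^{'} \, (F_{1} \times F_{2})$. By the definition of the product proximity this says $E_{1} \delta_{1} F_{1}$ and $E_{2} \delta_{1} F_{2}$. Applying the hypothesis $\delta_{1} > \delta_{2}$ in each coordinate gives $E_{1} \delta_{2} F_{1}$ and $E_{2} \delta_{2} F_{2}$, and invoking the definition of $\delta_{2}^{'}$ in the other direction yields $(E_{1} \times E_{2}) \, \delta_{2}^{'} \, (F_{1} \times F_{2})$. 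Since the pair was arbitrary, this establishes $\delta_{1}^{'} > \delta_{2}^{'}$. The engine of the argument is thus nothing more than coordinatewise monotonicity of the near relation.

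The one step that needs care---and is the only real obstacle---is the scope of the product-proximity definition. The excerpt specifies near-ness on $X \times X$ only for product (rectangular) subsets, whereas a general subset of $X \times X$ need not be a rectangle. For the comparison of two product proximities this is harmless, since the defining clause for each $\delta_{i}^{'}$ is stated on exactly these product subsets and the short implication above already compares $\delta_{1}^{'}$ and $\delta_{2}^{'}$ on their common domain of definition. If instead one wants the statement for arbitrary subsets, I would first fix the convention under which near-ness extends---for instance reading $\delta_{i}^{'}$ as the coarsest proximity making both projections pc-maps---and then rerun the same coordinatewise argument using $\delta_{1} > \delta_{2}$ in each factor. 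In every reading the substantive content remains the two-line implication, and the remaining work is only the bookkeeping needed to confirm that the rectangular comparison propagates to the chosen extension.
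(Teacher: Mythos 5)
Your proof is correct and follows exactly the same route as the paper's: unpack $\delta_{1}^{'}$-nearness of two rectangles into coordinatewise $\delta_{1}$-nearness, apply $\delta_{1} > \delta_{2}$ in each factor, and repack into $\delta_{2}^{'}$-nearness. Your additional remark about the definition being stated only on rectangular subsets is a reasonable caveat the paper does not address, but it does not change the substance of the argument.
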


\begin{proof}
	Let $(E_{1} \times E_{2}) \delta_{1}^{'} (F_{1} \times F_{2})$ on $X \times X$. Then we have that \[E_{1} \delta_{1} F_{1} \ \ \ \ \text{and} \ \ \ \ E_{2} \delta_{1} F_{2}.\] Since $\delta_{1}$ is finer than $\delta_{2}$, we get \[E_{1} \delta_{2} F_{1} \ \ \ \ \text{and} \ \ \ \ E_{2} \delta_{2} F_{2}.\] It follows that $(E_{1} \times E_{2}) \delta_{2}^{'} (F_{1} \times F_{2})$ on $X \times X$. Thus, we conclude that $\delta_{1}^{'} > \delta_{2}^{'}$. 
\end{proof}

\begin{theorem}\label{teo2}
	Let $\delta_{1}$ and $\delta_{2}$ be two proximities on $X$. Assume that $\delta_{1} > \delta_{2}$. Then TC$(X,\delta_{1}) \leq$ TC$(X,\delta_{2})$.
\end{theorem}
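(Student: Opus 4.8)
The plan is to reuse a single optimal cover across both proximities: I would show that any proximal Schwarz-genus cover witnessing $\mathrm{TC}(X,\delta_2)$ can be transported verbatim into a cover witnessing $\mathrm{TC}(X,\delta_1)$, so that the minimising integer cannot increase when the proximity is refined. Set $n=\mathrm{TC}(X,\delta_2)$. By the definition of proximal complexity as the proximal Schwarz genus of the path fibration $\pi:PX\to X\times X$, there is a cover $\{U_1,\dots,U_n\}$ of $X\times X$ and pc-maps $s_i:(U_i,\delta'_{2,U_i})\to PX$ with $\pi\circ s_i=1_{U_i}$ for every $i$. The aim is to verify that the same sets $U_i$ and the same maps $s_i$ remain admissible data for the proximal Schwarz genus of $\pi$ computed with respect to $\delta_1$, which would show that this genus is at most $n$ and hence give the inequality.

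First I would pass the hypothesis to the product level. Proposition \ref{prop1} turns $\delta_1>\delta_2$ on $X$ into $\delta'_1>\delta'_2$ on $X\times X$, and restricting to a cover member gives $\delta'_{1,U_i}>\delta'_{2,U_i}$; concretely, $E\,\delta'_{1,U_i}\,F$ always forces $E\,\delta'_{2,U_i}\,F$. The section identity $\pi\circ s_i=1_{U_i}$ is purely set-theoretic and so is unaffected by the change of proximity, so the only genuine verification is that each $s_i$ stays proximally continuous once its domain is refined to $\delta'_{1,U_i}$.

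For that continuity step I would argue straight from the definition of a pc-map into a mapping space: starting from $E\,\delta'_{1,U_i}\,F$, the monotonicity above yields $E\,\delta'_{2,U_i}\,F$, whereupon proximal continuity of $s_i$ for the $\delta_2$-data gives that $s_i(E)$ is near $s_i(F)$ in $PX$. Provided this last nearness is read in the path-space proximity appropriate to the $\delta_1$-fibration, each $U_i$ still carries a proximally continuous section, the cover of size $n$ is legitimate for $\delta_1$, and minimising over all such covers yields $\mathrm{TC}(X,\delta_1)\le n=\mathrm{TC}(X,\delta_2)$.

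The delicate part, and the one I expect to require real care, is exactly the codomain of the sections. The domain comparison is handed to us cleanly by Proposition \ref{prop1}, but the target of each $s_i$ is $PX$ equipped with the proximity that $\delta$ induces on the mapping space, and this induced path-space proximity itself changes with $\delta$. I would therefore first isolate a monotonicity lemma for $PX$ — an analogue of Proposition \ref{prop1} stating that refining $\delta$ on $X$ refines the induced nearness on $PX$ in the direction compatible with the argument above — and only then run the continuity verification. Reconciling the two monotonicities, one for $X\times X$ and one for $PX$, so that a single reused cover is simultaneously valid for both proximities, is the crux on which the whole proof turns.
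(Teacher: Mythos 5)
Your overall strategy --- transport the optimal $\delta_2$-cover and its sections to the $\delta_1$-setting via the product-level monotonicity of Proposition \ref{prop1} --- is the same mechanism the paper uses; the paper merely dresses it as a proof by contradiction (assuming $\mathrm{TC}(X,\delta_1)=k>l=\mathrm{TC}(X,\delta_2)$ and comparing sections $s_m$ and $t_m$ for $l<m<k$), and your direct formulation is cleaner. The domain half of your verification is exactly the paper's: $E\,\delta_1'\,F$ forces $E\,\delta_2'\,F$, so the $\delta_2$-proximal continuity of each section can be invoked.

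The codomain issue you isolate at the end is, however, a genuine gap, and the fix you sketch would not close it. With the mapping-space proximity recalled in Section \ref{subsec:subsec3}, refining $\delta$ on $X$ refines the induced nearness on $PX$: if $\delta_1>\delta_2$, then families of paths that are near for the $\delta_1$-induced relation are near for the $\delta_2$-induced one, not conversely. Hence from $E\,\delta_1'\,F\Rightarrow E\,\delta_2'\,F\Rightarrow s_i(E)\,\delta_2''\,s_i(F)$ you cannot conclude $s_i(E)\,\delta_1''\,s_i(F)$, which is what $\delta_1$-proximal continuity of $s_i$ demands; the monotonicity lemma for $PX$ that you defer to points in the unhelpful direction. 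A second facet of the same problem is that a $\delta_2$-proximal path need not be a $\delta_1$-proximal path (the inclusion of path spaces goes the other way), so the reused sections need not even take values in the correct path space. To be fair, the paper's own proof silently performs the same switch from $\delta_1''$ to $\delta_2''$ without comment, so your proposal reproduces the paper's argument together with its weakness --- but you have located the point where the argument is incomplete rather than repaired it.
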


\begin{proof}
	Suppose that TC$(X,\delta_{1}) >$ TC$(X,\delta_{2})$. In other words, we have that TC$(X,\delta_{1}) = k$, TC$(X,\delta_{2}) = l$, and $k > l$. If TC$(X,\delta_{1}) = k$, then there exists a pc-map $s_{i} : U_{i} \subset X \times X \rightarrow PX$ for each $i \in \{1,\cdots,k\}$ with the property that $\pi_{1} \circ s_{i}$ is the identity map, where $\pi_{1} : (PX,\delta_{1}^{''}) \rightarrow (X \times X,\delta_{1}^{'})$ is the proximal path fibration. Similarly, if TC$(X,\delta_{2}) = l$, then there exists a pc-map $t_{j} : V_{j} \subset X \times X \rightarrow PX$ for each $j \in \{1,\cdots,l\}$ with the property that $\pi_{2} \circ t_{j}$ is the identity map, where $\pi_{2} : (PX,\delta_{2}^{''}) \rightarrow (X \times X,\delta_{2}^{'})$ is the proximal path fibration. For $l < m < k$, $s_{m}$ is a pc-map and satisfies that $(E_{1} \times E_{2}) \delta_{1}^{'} (F_{1} \times F_{2})$ implies $s_{m}(E_{1},E_{2}) \delta_{1}^{''} s_{m}(F_{1},F_{2})$. On the other hand, by Proposition \ref{prop1}, $(E_{1} \times E_{2}) \delta_{1}^{'} (F_{1} \times F_{2})$ implies that $(E_{1} \times E_{2}) \delta_{2}^{'} (F_{1} \times F_{2})$, which follows that $t_{m}(E_{1},E_{2}) \delta_{2}^{''} t_{m}(F_{1},F_{2})$. This is a contradiction because TC$(X,\delta_{2})$ cannot exceed $l$. As a consequence, TC$(X,\delta_{1}) \leq$ TC$(X,\delta_{2})$.
\end{proof}

\quad Let $(X,\tau)$ be a completely regular space. Then the subsets $E$ and $F$ of $X$ are said to be functionally distinguishable if and only if there exists a continuous function $f : X \rightarrow [0,1]$ with $f(E) = 0$ and $f(F) = 1$ \cite{NaimpallyWarrack:1970}. A proximity $\delta$ on a set $X$ can be defined by
\begin{eqnarray}\label{eq1}
	E \underline{\delta} F \ \ \text{if and only if} \ \ E \ \text{and} \ F \ \text{are functionally distinguishable}.
\end{eqnarray}

\begin{corollary}
	Let $(X,\tau)$ and $(X,\tau^{'})$ be two completely regular spaces such that $\tau \subset \tau^{'}$. Given any proximities $\delta$ and $\delta^{'}$ on $X$ with respect to $\tau$ and $\tau^{'}$, respectively, by considering (\ref{eq1}), we have that TC$(X,\delta) >$ TC$(X,\delta^{'})$.  
\end{corollary}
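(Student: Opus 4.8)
The plan is to read off the result directly from Theorem \ref{teo2} once the correct fineness relation between $\delta$ and $\delta^{'}$ has been established. The only genuine content is translating the topological inclusion $\tau \subset \tau^{'}$ into a comparison of the two proximities defined by (\ref{eq1}); after that the complexity inequality is automatic.

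First I would unwind the definition (\ref{eq1}): the relation $E \underline{\delta} F$ holds exactly when $E$ and $F$ are functionally distinguishable with respect to $\tau$, and likewise $E \underline{\delta^{'}} F$ with respect to $\tau^{'}$. The key observation is that, since $\tau \subset \tau^{'}$, every $\tau$-continuous function $f : X \rightarrow [0,1]$ is also $\tau^{'}$-continuous, because preimages of opens lie in $\tau$ and hence in $\tau^{'}$. Consequently, if $E$ and $F$ are functionally distinguishable with respect to $\tau$, the very same separating function witnesses functional distinguishability with respect to $\tau^{'}$, which gives the implication $E \underline{\delta} F \Rightarrow E \underline{\delta^{'}} F$. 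Taking contrapositives yields $E \delta^{'} F \Rightarrow E \delta F$, and by the definition of fineness recorded in Section \ref{sec:preliminaries} this is precisely the statement $\delta^{'} > \delta$, i.e.\ $\delta^{'}$ is finer than $\delta$.

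With $\delta^{'} > \delta$ in hand, I would apply Theorem \ref{teo2} with $\delta_{1} = \delta^{'}$ and $\delta_{2} = \delta$ to obtain TC$(X,\delta^{'}) \leq$ TC$(X,\delta)$, that is, TC$(X,\delta) \geq$ TC$(X,\delta^{'})$.

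The main obstacle here is not any hard estimate but simply keeping the directions of the comparisons aligned: the finer topology produces the finer proximity (more functions $\Rightarrow$ more far pairs $\Rightarrow$ fewer near pairs), and by Theorem \ref{teo2} the finer proximity carries the \emph{smaller} complexity, so the coarser data $(X,\delta)$ ends up with the larger TC. I would also flag that the argument yields the non-strict inequality TC$(X,\delta) \geq$ TC$(X,\delta^{'})$; the strict inequality as displayed would require an extra input, for instance a pair separated by some $\tau^{'}$-function but by no $\tau$-function that genuinely forces an additional cover element in the Schwarz genus, and absent such a construction I would read the stated $>$ as $\geq$.
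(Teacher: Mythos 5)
Your argument is essentially the paper's own proof: the paper simply cites Theorem 2.17b of Naimpally--Warrack for the implication $\tau \subset \tau^{'} \Rightarrow \delta^{'} > \delta$, which you instead verify directly (correctly, via the contrapositive of functional distinguishability), and then both you and the paper conclude by Theorem \ref{teo2}. Your closing caveat is also well taken: the argument only yields TC$(X,\delta) \geq$ TC$(X,\delta^{'})$, and the paper's one-line proof suffers from the same gap with respect to the strict inequality as stated.
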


\begin{proof}
	The result is obtained by Theorem 2.17b of \cite{NaimpallyWarrack:1970} and Theorem \ref{teo2}.
\end{proof}

\begin{definition}\label{def1}
	Let $(X,\delta)$ be a proximity space and $Y \subset X \times X$. The relative proximal topological complexity TC$_{X}(Y,\delta_{Y})$ (or simply relative proximal complexity) is defined as the proximal Schwarz genus of the proximal fibration $\pi^{'} : PX^{Y} \rightarrow Y$, where $PX^{Y}$ is a subset of $PX$ that consists of all proximal paths $\alpha : [0,1] \rightarrow X$ with the property $(\alpha(0),\alpha(1)) \in Y$.
\end{definition}

\quad Note that, in Definition \ref{def1}, both $PX^{Y}$ and $Y$ have the respective induced (subspace) proximities. Then $\pi^{'}$ is a pc-map, which concludes that it is another example of a proximal fibration. The natural result of Definition \ref{def1} says that the equality TC$(X,\delta) =$ TC$_{X}(X \times X,\delta^{'})$ holds for a proximity $\delta^{'}$ defined on $X \times X$ if one considers $Y = X \times X$. Moreover, the relative proximal TC number is a lower bound for the proximal TC number, i.e.,
\begin{eqnarray*}
	\text{TC}_{X}(Y,\delta_{Y}) \leq \text{TC}(X,\delta).
\end{eqnarray*}

\begin{definition}\label{def2}
	Let $f : (X,\delta_{1}) \rightarrow (Y,\delta_{2})$ be a proximal fibration. Given a proximal fibration $\pi^{f} : (PX,\delta^{'}) \rightarrow (X \times Y,\delta^{''})$ with $\pi^{f} = (1 \times f) \circ \pi$, i.e., for any proximal path $\alpha$ on $X$, $\pi^{f}(\alpha) = (\alpha(0),f \circ \alpha(1))$, the proximal topological complexity of $f$, denoted by TC$_{\delta_{1},\delta_{2}}(f)$, is defined by genus$_{\delta^{'},\delta^{''}}(\pi^{f})$.
\end{definition}

\begin{example}
	\textbf{i)} Let $f = 1_{X} : (X,\delta) \rightarrow (X,\delta)$. Then $\pi^{1_{X}}$ corresponds to the proximal path fibration $\pi : PX \rightarrow X \times X$, that is,
	\begin{eqnarray*}
		\text{TC}_{\delta,\delta}(1_{X}) = \text{TC}(X,\delta).
	\end{eqnarray*}

    \textbf{ii)} Let $f : (X,\delta_{1}) \rightarrow (\{y_{0}\},\delta_{2})$ be the consant map. Then $f$ is a proximal fibration. For the proximal fibration $\pi^{f} : (PX,\delta^{'}) \rightarrow (X \times \{y_{0}\},\delta^{''})$, $\pi^{f}(\alpha) = (\alpha(0),y_{0})$, define a map
    \begin{eqnarray*}
    	&&s : (X \times \{y_{0}\},\delta^{''}) \longrightarrow (PX,\delta^{'}) \\
    	&&\hspace*{1.0cm} (x,y_{0}) \longmapsto s(x,y_{0}) = \beta_{x},
    \end{eqnarray*}
    where $\beta_{x}$ is the constant map at $x$. Given $E \times \{y_{0}\}$, $F \times \{y_{0}\} \in 2^{X \times \{y_{0}\}}$, the fact $(E \times \{y_{0}\}) \delta^{''} (F \times \{y_{0}\})$ implies that $E \delta_{1} F$. Then the constant proximal path at any point of $E$ is near the constant proximal path at any point of $F$, which follows that $s$ is a pc-map. Moreover, we have that $\pi^{f} \circ s$ is the identity on $X \times \{x_{0}\}$. Finally, TC$_{\delta_{1},\delta_{2}}(f) = 1$.
    
    \textbf{iii)} Consider any two proximity spaces $(X,\delta_{1})$ and $(Y,\delta_{2})$ such that the map $p_{2}$ from $(X \times Y,\delta)$ to $(Y,\delta_{2})$ is the second projection. For the proximal fibration $\pi^{p_{2}} : (P(X \times Y),\delta^{'}) \rightarrow ((X \times Y) \times Y,\delta^{''})$ with $\pi^{p_{2}}(\alpha) = (\alpha(0),p_{2}(\alpha(1)))$, define a map
    \begin{eqnarray*}
    	s : ((X \times Y) \times Y,\delta^{''}) \rightarrow (P(X \times Y),\delta^{'})
    \end{eqnarray*}
    given by $s((x_{1},y_{1}),y_{2})$ is a proximal path in $X \times Y$ from $(x_{1},y_{1})$ to $(x_{0},y_{2})$, where $x_{0}$ is a fixed point in $X$. Assume that $(E_{1} \times E_{2}) \times E_{3}$ is near $(F_{1} \times F_{2}) \times F_{3}$ for $(E_{1} \times E_{2}) \times E_{3}$, $(F_{1} \times F_{2}) \times F_{3} \in 2^{(X \times Y) \times Y}$. Then $E_{1} \times E_{2}$ is near $F_{1} \times F_{2}$ in $X \times Y$ and $E_{3}$ is near $F_{3}$ in $Y$. The second statement also implies that $\{x_{0}\} \times E_{3}$ is near $\{x_{0}\} \times F_{3}$ in $X \times Y$. Thus, the path from any point of $E_{1} \times E_{2}$ to any point of $\{x_{0}\} \times E_{3}$ is near the path from any point of $F_{1} \times F_{2}$ to any point of $\{x_{0}\} \times F_{3}$. This means that $s$ is a pc-map. Since $\pi_{1}^{p_{2}} \circ s$ is the identity map on $(X \times Y) \times Y$, we conclude that TC$_{\delta_{1},\delta_{2}}(p_{2}) = 1$.
\end{example} 

\begin{proposition}
	Let $(X,\delta)$, $(X,\delta_{1})$, $(X,\delta_{1}^{'})$ and $(Y,\delta^{'})$, $(Y,\delta_{2})$, $(Y,\delta_{2}^{'})$ be proximity spaces. Then the following hold:
	
	\textbf{i)} If $f : (X,\delta_{1}) \rightarrow (Y,\delta^{'})$ is a pc-map and $\delta_{1} > \delta_{1}^{'}$ on $X$, then TC$_{\delta_{1},\delta^{'}}(f)$ is greater than or equal to TC$_{\delta_{1}^{'},\delta^{'}}(f)$.
	
	\textbf{ii)} If $f : (X,\delta) \rightarrow (Y,\delta_{2}^{'})$ is a pc-map and $\delta_{2}^{'} > \delta_{2}$ on $Y$, then TC$_{\delta,\delta_{2}}(f)$ is less than or equal to TC$_{\delta,\delta_{2}^{'}}(f)$.
\end{proposition}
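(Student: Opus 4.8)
The plan is to prove both inequalities by the minimal-cover comparison underlying the proof of Theorem~\ref{teo2}. In each part I would fix a trivializing cover of $X \times Y$ that realizes one of the two genera and show that its local sections into $PX$ remain proximally continuous once the proximity is refined, so that the same cover witnesses the other genus and the stated inequality between the two Schwarz genera of the fibration $\pi^{f}$ from Definition~\ref{def2} follows. Beyond the definition of a pc-map, the only external input is Proposition~\ref{prop1}, which records how the product proximity on $X \times Y$ reacts when one factor is refined.

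I would dispose of \textbf{(ii)} first, as it is the cleaner case. Passing from $\delta_{2}$ to the finer $\delta_{2}^{'}$ on $Y$ leaves the path space $PX$ and its proximity $\delta^{'}$ untouched, since every proximal path still lands in $(X,\delta)$; only the product proximity $\delta^{''}$ on $X \times Y$ is altered. By Proposition~\ref{prop1}, $\delta_{2}^{'} > \delta_{2}$ forces the two resulting product proximities to be comparable, so nearness in one implies nearness in the other. Given a cover $\{W_{1},\dots,W_{n}\}$ of $X \times Y$ with pc-sections $\sigma_{i} : W_{i} \rightarrow PX$ satisfying $\pi^{f} \circ \sigma_{i} = 1_{W_{i}}$, I would feed a near pair in $X \times Y$ through this comparison and then apply the pc-property of $\sigma_{i}$, concluding that each $\sigma_{i}$ is again a pc-section for the other product proximity; since the identity $\pi^{f}\circ\sigma_{i}=1_{W_{i}}$ is unaffected, the same cover trivializes $\pi^{f}$ and the genus inequality drops out. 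The step requiring attention is to confirm that this transport of the pc-property runs in precisely the direction demanded by the stated inequality, since refining the domain proximity relaxes a section's continuity constraint.

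For \textbf{(i)} the same scheme is the natural route, but refining $\delta_{1}$ to $\delta_{1}^{'}$ on $X$ now perturbs \emph{two} proximities simultaneously: the product proximity $\delta^{''}$ on $X \times Y$, again governed by Proposition~\ref{prop1}, and the mapping-space proximity $\delta^{'}$ on $PX$, because $PX$ is built from paths into $X$. Reading off the definition of the proximal mapping-space proximity, $\delta_{1} > \delta_{1}^{'}$ makes $\delta^{'}$ finer as well, so for every section $\sigma_{i}$ both its domain and its codomain are refined at once.

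This simultaneous refinement is where I expect the genuine difficulty. Refining the domain proximity of a section relaxes its continuity constraint, whereas refining the codomain proximity tightens it, so the two effects oppose one another and the pc-property can no longer be transported by a single monotonicity step as in part (ii). To force the argument through I would adopt the contradiction format of Theorem~\ref{teo2}: assume the claimed inequality fails, use the sections that realize the strictly smaller genus together with the product-proximity implication of Proposition~\ref{prop1} to build sections of the type ruled out by minimality, and extract a contradiction. The crux, and the step I would spend the most care on, is to show that for these particular sections into $PX$ nearness of the image paths is decided by nearness of their values in $(X,\delta_{1})$, so that the refinement detected by Proposition~\ref{prop1} is exactly what controls nearness in the mapping-space proximity $\delta^{'}$; establishing this reduction is what lets the product-proximity behaviour dominate and yields the asserted direction of the inequality.
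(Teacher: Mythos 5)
Your proposal correctly locates where each proximity enters the data of a local section $s_{i} : V_{i} \subset X \times Y \rightarrow PX$ of $\pi^{f}$ --- indeed more carefully than the paper does --- but it stops short of an argument in exactly the places where one is needed, so there is a genuine gap. For part \textbf{(ii)} you observe, correctly, that $\delta_{2}$ enters only through the product proximity on the domain $X \times Y$, since the codomain $PX$ carries a proximity built from $\delta$ on $X$ alone. But then the transport you propose runs against the stated inequality: the sections realizing $\text{TC}_{\delta,\delta_{2}^{'}}(f)$ are proximally continuous out of the \emph{finer} product proximity (fewer near pairs to preserve), and passing to the coarser $\delta_{2}$-product enlarges the collection of near pairs whose images must be near, so the pc property of $s_{i}$ does not follow from a single monotonicity step; you flag this yourself as ``the step requiring attention'' but never discharge it. The paper's own proof avoids the issue (at the cost of precision) by reading the change of $\delta_{2}$ as a change in the \emph{codomain} data of the sections: it asserts $s_{i}(E_{1},E_{2})(t)\,\delta_{2}^{'}\,s_{i}(F_{1},F_{2})(t)$ whenever the inputs are near and then coarsens $\delta_{2}^{'}$ to $\delta_{2}$, a one-line monotonicity step that does go in the required direction, while keeping a single fixed nearness relation $\tau^{'}$ on $X \times Y$ throughout.

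For part \textbf{(i)} you again diagnose the real difficulty --- refining $\delta_{1}$ simultaneously refines the mapping-space proximity on $PX$ (tightening the sections' continuity constraint) and, via Proposition \ref{prop1}, the product proximity on $X \times Y$ (relaxing it), and the two effects oppose --- but the resolution is only announced, not executed: ``adopt the contradiction format of Theorem \ref{teo2}'' and ``show that nearness of the image paths is decided by nearness of their values in $(X,\delta_{1})$'' are statements of intent, and it is precisely this last reduction that would constitute the proof. The paper, by contrast, settles \textbf{(i)} with the same one-step codomain monotonicity: from $\text{TC}_{\delta_{1},\delta^{'}}(f)=m$ it takes the sections $s_{i}$, notes that $\delta_{1}>\delta_{1}^{'}$ turns $\delta_{1}$-nearness of the values $s_{i}(E_{1},E_{2})(t)$ into $\delta_{1}^{'}$-nearness, and concludes $\text{TC}_{\delta_{1}^{'},\delta^{'}}(f)\leq m$, silently ignoring the induced change of the domain proximity that you correctly worry about. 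So your proposal identifies a subtlety the paper glosses over, but as submitted it neither completes the argument for \textbf{(i)} nor carries the transport in the right direction for \textbf{(ii)}.
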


\begin{proof}
	\textbf{i)} Let TC$_{\delta_{1},\delta^{'}}(f) = m$. Then the proximal Schwarz genus of $\pi^{f}$ is equal to $m$. It follows that $X \times Y$ can be written as the union of $V_{1},\cdots,V_{m}$ and $\pi^{f}$ admits a pc-map $s_{i} : V_{i} \rightarrow PX$ such that $\pi^{f} \circ s_{i} = 1_{V_{i}}$ for each $i = 1,\cdots,m$. Assume that $\tau$ denotes a proximity on $X \times Y$. The proximal continuity of $s_{i}$ gives us that $s_{i}(E_{1},E_{2})(t) \delta_{1} s_{i}(F_{1},F_{2})(t)$ whenever $(E_{1} \times E_{2}) \tau (F_{1} \times F_{2})$ for $E_{1} \times E_{2}$, $F_{1} \times F_{2} \in 2^{V_{i}}$. Since $\delta_{1} > \delta_{1}^{'}$, we have that $s_{i}(E_{1},E_{2})(t) \delta_{1}^{'} s_{i}(F_{1},F_{2})(t)$ whenever $(E_{1} \times E_{2}) \tau (F_{1} \times F_{2})$. As a consequence, TC$_{\delta_{1}^{'},\delta^{'}}(f) \leq m$.
	
	\textbf{ii)} Let TC$_{\delta,\delta_{2}^{'}}(f) = m$. Then the proximal Schwarz genus of $\pi^{f}$ is equal to $m$. Therefore, we have that $X \times Y = V_{1} \cup \cdots \cup V_{m}$ and there exits a pc-map $s_{i} : V_{i} \rightarrow PX$ with the property $\pi^{f} \circ s_{i} = 1_{V_{i}}$ for each $i = 1,\cdots,m$. Assume that $X \times Y$ has a nearness relation $\tau^{'}$. The proximal continuity of $s_{i}$ says that $s_{i}(E_{1},E_{2})(t) \delta_{2}^{'} s_{i}(F_{1},F_{2})(t)$ whenever $(E_{1} \times E_{2}) \tau^{'} (F_{1} \times F_{2})$ for $E_{1} \times E_{2}$, $F_{1} \times F_{2} \in 2^{V_{i}}$. Since $\delta_{2}^{'} > \delta_{2}$, we have that $s_{i}(E_{1},E_{2})(t) \delta_{2} s_{i}(F_{1},F_{2})(t)$ whenever $(E_{1} \times E_{2}) \tau^{'} (F_{1} \times F_{2})$. As a consequence, TC$_{\delta,\delta_{2}}(f) \leq m$.
\end{proof}

\section{Proximal Homotopic Distances and Proximal Motion Planning Problem}
\label{sec:2}

\quad In this section, our main task is to show that the proximal TC is a special case of the introduced notion proximal homotopic distance. Also, we point out useful properties related to the homotopic distance.

\begin{definition}
	Let $f$ and $g$ be two pc-maps from $(X,\delta_{1})$ to $(Y,\delta_{2})$. The proximal homotopic distance (shortly called proximal distance) between $f$ and $g$, denoted by D$_{\delta_{1},\delta_{2}}(f,g)$, is the minimum integer $m>0$ if the following hold:
	\begin{itemize}
		\item $X$ has at least one cover $\{V_{1}, \cdots, V_{m}\}$.
		\item For all $j = 1, \cdots, m$, $f|_{V_{j}} \simeq_{\delta_{1},\delta_{2}} g|_{V_{j}}$.
	\end{itemize}
    If such a covering does not exist, then D$_{\delta_{1},\delta_{2}}(f,g) = \infty$.
\end{definition} 

\begin{proposition}\label{p4}
	Let $f$, $g : (X,\delta_{1}) \rightarrow (Y,\delta_{2})$ be pc-maps. Then
	
	\textbf{i)} D$_{\delta_{1},\delta_{2}}(f,g) =$ D$_{\delta_{1},\delta_{2}}(g,f)$.
	
	\textbf{ii)} D$_{\delta_{1},\delta_{2}}(f,g) = 1$ if and only if $f$ and $g$ are proximally homotopic.
	
	\textbf{iii)} If $X$ is finite and proximally connected, then D$_{\delta_{1},\delta_{2}}(f,g)$ is finite.
\end{proposition}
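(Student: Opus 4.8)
The plan is to read all three parts straight off the definition of the proximal distance, using the fact recorded earlier that proximal homotopy $\simeq_{\delta_1,\delta_2}$ is an equivalence relation. Throughout, a cover $\{V_1,\dots,V_m\}$ of $X$ will be called \emph{admissible} (for the ordered pair $(f,g)$) when $f|_{V_j}\simeq_{\delta_1,\delta_2} g|_{V_j}$ for every $j$, so that $\mathrm{D}_{\delta_1,\delta_2}(f,g)$ is the least cardinality of an admissible cover (and $\infty$ if none exists).

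For \textbf{(i)} I would note that, because $\simeq_{\delta_1,\delta_2}$ is symmetric, $f|_{V_j}\simeq_{\delta_1,\delta_2} g|_{V_j}$ holds if and only if $g|_{V_j}\simeq_{\delta_1,\delta_2} f|_{V_j}$. Hence a cover is admissible for $(f,g)$ precisely when it is admissible for $(g,f)$; the two notions of distance are minima taken over the same family of covers, so they agree. For \textbf{(ii)}, since $m>0$ the only possible one-element cover of $X$ is $\{X\}$ itself. If $\mathrm{D}_{\delta_1,\delta_2}(f,g)=1$, the witnessing admissible cover is $\{X\}$, and $f=f|_X\simeq_{\delta_1,\delta_2} g|_X=g$; conversely, if $f\simeq_{\delta_1,\delta_2}g$ then $\{X\}$ is admissible, so $\mathrm{D}_{\delta_1,\delta_2}(f,g)\le 1$, and as any admissible cover has at least one member this forces equality.

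For \textbf{(iii)} I would produce an explicit finite admissible cover. Writing $X=\{x_1,\dots,x_n\}$, take the singleton cover $V_i=\{x_i\}$, which has $n$ members; it then suffices to verify $f|_{V_i}\simeq_{\delta_1,\delta_2} g|_{V_i}$ for each $i$, giving $\mathrm{D}_{\delta_1,\delta_2}(f,g)\le n<\infty$. Restricting to a one-point domain, a proximal homotopy $G:\{x_i\}\times I\to Y$ with $G(x_i,0)=f(x_i)$ and $G(x_i,1)=g(x_i)$ is, after identifying $\{x_i\}\times I$ with $I$ under its induced proximity, nothing but a proximal path in the codomain from $f(x_i)$ to $g(x_i)$. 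By the earlier theorem that a proximity space is path-connected if and only if it is connected, proximal connectedness supplies exactly such a path, so each singleton piece is admissible.

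The main obstacle, and the only step carrying real content, is this per-point homotopy in \textbf{(iii)}: finiteness of $X$ merely bounds the \emph{number} of pieces, whereas it is connectedness that makes each piece admissible. I would need to be careful that the joining paths live in the \emph{target} of $f$ and $g$, so the connectedness genuinely used is that of the codomain (consistent with the paper's standing convention that the spaces in play are connected), and I would check that each such path, viewed on $\{x_i\}\times I$ with its subspace proximity, is honestly a pc-map and hence a legitimate proximal homotopy of the restrictions. This last verification is routine, since on a one-point domain the nearness relation is degenerate and cannot obstruct proximal continuity; the substantive input remains the existence of the connecting proximal paths.
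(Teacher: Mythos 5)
Parts \textbf{i)} and \textbf{ii)} of your argument coincide with the paper's proof: both read the symmetry off the symmetry of the proximal homotopy relation, and both identify a one-element cover with $\{X\}$ itself. The substance is in part \textbf{iii)}, where you and the paper use the same finite singleton cover $\{\{x\}: x\in X\}$ but build the per-point homotopy in genuinely different ways. The paper takes a proximal path $h$ in the \emph{domain} from $x$ to $f^{-1}(g(x))$ (invoking proximal connectedness of $X$) and pushes it forward, setting $G(x,t)=f(h(t))$, so that $G(x,1)=f(f^{-1}(g(x)))=g(x)$. You instead take a proximal path in the \emph{codomain} from $f(x)$ to $g(x)$ and read it directly as a homotopy of the restrictions to the one-point set. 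Your construction avoids the paper's most fragile step: $f^{-1}(g(x))$ need not exist unless $g(x)$ lies in the image of $f$ (and need not be single-valued unless $f$ is injective), so the paper's homotopy is not well defined for general pc-maps $f,g$.

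The cost of your route, which you do acknowledge, is that it consumes path-connectedness of $Y$ rather than the stated hypothesis that $X$ is proximally connected. This is not a cosmetic substitution: with $X$ a single point (finite and connected) and $Y$ a two-point discrete proximity space, $f$ and $g$ hitting the two different points admit no proximal homotopy on any cover, so $\mathrm{D}_{\delta_1,\delta_2}(f,g)=\infty$ while the stated hypotheses hold. In other words, the proposition as literally phrased fails, your proof establishes the corrected statement with connectedness imposed on the codomain (or on the images of $f$ and $g$ within one path-component of $Y$), and the paper's proof papers over the same issue by silently assuming $g(X)\subseteq f(X)$. Your appeal to the paper's ``standing convention'' is about spaces whose TC is being computed, which does not literally cover $Y$ here, so if you want your argument to stand on its own you should state the codomain connectivity assumption explicitly rather than inherit it by convention. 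The remaining verification you defer --- that a proximal path $I\to Y$, viewed on $\{x_i\}\times I$ with the product/subspace proximity, is a pc-map --- is indeed routine, since nearness of $\{x_i\}\times E$ and $\{x_i\}\times F$ reduces to nearness of $E$ and $F$ in $I$.
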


\begin{proof}
	\textbf{i)} It is clear from the fact that the statement $f|_{V_{j}} \simeq_{\delta_{1},\delta_{2}} g|_{V_{j}}$
    can be thought as $g|_{V_{j}} \simeq_{\delta_{1},\delta_{2}} f|_{V_{j}}$
    for all $j = 1, \cdots, m$, where $\{V_{1},\cdots,V_{m}\}$ is a covering of $X$.
    
    \textbf{ii)} Let D$_{\delta_{1},\delta_{2}}(f,g) = 1$. Then $X = V_{1}$ and we find that $f$ is proximally homotopic to $g$. Conversely, if $f$ and $g$ are proximally homotopic maps, then the restrictions $f|_{V_{1}}$ and $g|_{V_{1}}$ are proximally homotopic, where $X$ has a one-element covering $\{V_{1}\}$. Thus, we conclude that D$_{\delta_{1},\delta_{2}}(f,g) = 1$. 
    
    \textbf{iii)} Let $X$ be a finite proximity space. Then $\{\{x\} : x \in X\}$ is a finite covering of $X$. There exists a pc-map
    $h : [0,1] \rightarrow X$ with $h(0) = x$ and $h(1) = f^{-1}(g(x))$ because $X$ is proximally connected. Define a map
    \begin{eqnarray*}
    	&&G : U \times [0,1] \rightarrow Y \\
    	&&\hspace*{0.8cm} (x,t) \mapsto G(x,t) = f(h(t)).
    \end{eqnarray*}
    $G$ is a pc-map since $f$ and $h$ are pc-maps. Moreover, we have that
    \begin{eqnarray*}
    	&&G(x,0) = f(h(0)) = f(x), \\
    	&&G(x,1) = f(h(1)) = f(f^{-1}(g(x))) = g(x).
    \end{eqnarray*}
    It follows that $G$ is a proximal homotopy between $f|_{\{x\}}$ and $g|_{\{x\}}$. This shows that the proximal distance between $f$ and $g$ is finite.    
\end{proof}

\begin{proposition}\label{p2}
	Given pc-maps $f_{1}$, $f_{2}$, $g_{1}$, $g_{2} : (X,\delta_{1}) \rightarrow (Y,\delta_{2})$ such that $f_{1} \simeq_{\delta_{1},\delta_{2}} f_{2}$ and $g_{1} \simeq_{\delta_{1},\delta_{2}} g_{2}$, we have that D$_{\delta_{1},\delta_{2}}(f_{1},g_{1}) =$ D$_{\delta_{1},\delta_{2}}(f_{2},g_{2})$.
\end{proposition}

\begin{proof}
	We first show that D$_{\delta_{1},\delta_{2}}(f_{2},g_{2}) \leq$ D$_{\delta_{1},\delta_{2}}(f_{1},g_{1})$. Let D$_{\delta_{1},\delta_{2}}(f_{1},g_{1}) = m$. Then $X$ has a covering $\{V_{1},\cdots,V_{m}\}$ and for all $j = 1,\cdots,m$, we have that \[f_{1}|_{V_{j}} \simeq_{\delta_{1},\delta_{2}} g_{1}|_{V_{j}}.\] For all $j$, $f_{1}|_{V_{j}} \simeq_{\delta_{1},\delta_{2}} f_{2}|_{V_{j}}$ since $f_{1} \simeq_{\delta_{1},\delta_{2}} f_{2}$. Similarly, $g_{1} \simeq_{\delta_{1},\delta_{2}} g_{2}$ implies that $g_{1}|_{V_{j}} \simeq_{\delta_{1},\delta_{2}} g_{2}|_{V_{j}}$ for all $j$. By Theorem 11 of \cite{PetersTane2:2021}, we get
	\begin{eqnarray*}
		f_{2}|_{V_{j}} \simeq_{\delta_{1},\delta_{2}} g_{2}|_{V_{j}}
	\end{eqnarray*}
    for all $j$. This means that D$_{\delta_{1},\delta_{2}}(f_{2},g_{2}) \leq m$. A similar way can be used to show that D$_{\delta_{1},\delta_{2}}(f_{1},g_{1}) \leq$ D$_{\delta_{1},\delta_{2}}(f_{2},g_{2})$.
\end{proof}

\begin{lemma}\label{l1}
	Let $(X,\delta)$, $(X,\delta_{1})$, $(X,\delta_{1}^{'})$ and $(Y,\delta^{'})$, $(Y,\delta_{2})$, $(Y,\delta_{2}^{'})$ be proximity spaces. Then the following hold:
	
	\textbf{i)} If $f : (X,\delta) \rightarrow (Y,\delta_{2}^{'})$ is a pc-map and $\delta_{2}^{'} > \delta_{2}$ on $Y$, then $f : (X,\delta) \rightarrow (Y,\delta_{2})$ is a pc-map.
	
	\textbf{ii)} If $f : (X,\delta_{1}) \rightarrow (Y,\delta^{'})$ is a pc-map and $\delta_{1}^{'} > \delta_{1}$ on $X$, then $f : (X,\delta_{1}^{'}) \rightarrow (Y,\delta^{'})$ is a pc-map.
\end{lemma}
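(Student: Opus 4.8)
The plan is to observe that both statements are immediate consequences of unwinding two definitions and chaining implications in the correct order; no homotopy-theoretic or topological machinery is required. Recall that $f$ being a pc-map from $(A,\alpha)$ to $(B,\beta)$ means precisely that $E\,\alpha\,F \Rightarrow f(E)\,\beta\,f(F)$ for all $E,F\in 2^{A}$, and that $\gamma > \gamma'$ (i.e.\ $\gamma$ finer than $\gamma'$) means precisely that $E\,\gamma\,F \Rightarrow E\,\gamma'\,F$.

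For part \textbf{i)}, I would start from an arbitrary pair $E,F\in 2^{X}$ with $E\,\delta\,F$. The hypothesis that $f:(X,\delta)\rightarrow(Y,\delta_{2}^{'})$ is a pc-map gives $f(E)\,\delta_{2}^{'}\,f(F)$. Since $\delta_{2}^{'} > \delta_{2}$ on $Y$, applying the fineness relation to the subsets $f(E),f(F)\in 2^{Y}$ yields $f(E)\,\delta_{2}\,f(F)$. Composing these two implications gives $E\,\delta\,F \Rightarrow f(E)\,\delta_{2}\,f(F)$, which is exactly the assertion that $f:(X,\delta)\rightarrow(Y,\delta_{2})$ is a pc-map. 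Here the fineness is consumed on the codomain side, after the image under $f$ has been taken.

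For part \textbf{ii)}, the order of application is reversed: the fineness now lives on the domain and must be used before $f$ is applied. I would start from $E,F\in 2^{X}$ with $E\,\delta_{1}^{'}\,F$. Since $\delta_{1}^{'} > \delta_{1}$ on $X$, this gives $E\,\delta_{1}\,F$; then the hypothesis that $f:(X,\delta_{1})\rightarrow(Y,\delta^{'})$ is a pc-map gives $f(E)\,\delta^{'}\,f(F)$. Chaining yields $E\,\delta_{1}^{'}\,F \Rightarrow f(E)\,\delta^{'}\,f(F)$, establishing that $f:(X,\delta_{1}^{'})\rightarrow(Y,\delta^{'})$ is a pc-map.

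The only point requiring care—hardly a genuine obstacle—is tracking the direction of the fineness implication and applying it on the correct side of $f$: in \textbf{i)} the coarsening happens in the target after passing through $f$, whereas in \textbf{ii)} the refining hypothesis on the source must be invoked first to reduce $\delta_{1}^{'}$-nearness to $\delta_{1}$-nearness. Once the definitions are lined up this way, each part is a one-line composition of implications.
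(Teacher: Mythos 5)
Your proposal is correct and follows exactly the same route as the paper's own proof: in part \textbf{i)} apply the pc-map hypothesis first and then coarsen in the codomain via $\delta_{2}^{'} > \delta_{2}$, and in part \textbf{ii)} use $\delta_{1}^{'} > \delta_{1}$ on the domain first and then apply proximal continuity. Nothing is missing.
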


\begin{proof}
	\textbf{i)} We shall show that $E \delta F$ implies that $f(E) \delta_{2} f(F)$ for $E$, $F \in 2^{X}$. Let $E \delta F$. Then $f(E) \delta_{2}^{'} f(F)$ because $f : (X,\delta) \rightarrow (Y,\delta_{2}^{'})$ is a pc-map. By the assumption $\delta_{2}^{'} > \delta_{2}$, we get $f(E) \delta_{2} f(F)$. This gives the desired result.
	
	\textbf{ii)} The method is similar to the first part. Assume that $E \delta_{1}^{'} F$ for $E$, $F \in 2^{X}$. Since $\delta_{1}^{'} > \delta_{1}$, we have that $E \delta_{1} F$. By the proximal continuity of $f : (X,\delta_{1}) \rightarrow (Y,\delta^{'})$, we find that $f(E) \delta^{'} f(F)$, which completes the proof.
\end{proof}

\begin{theorem}\label{t1}
	Let $f_{1}$, $f_{2} : (X,\delta_{1}) \rightarrow (Y,\delta_{2})$ and $g_{1}$, $g_{2} : (X,\delta_{1}) \rightarrow (Y,\delta_{2}^{'})$ be pc-maps such that $f_{1} \simeq_{\delta_{1},\delta_{2}} f_{2}$ and $g_{1} \simeq_{\delta_{1},\delta_{2}^{'}} g_{2}$. If $\delta_{2}^{'} > \delta_{2}$, then D$_{\delta_{1},\delta_{2}}(f_{1},g_{1})$ is less than or equal to D$_{\delta_{1},\delta_{2}^{'}}(f_{2},g_{2})$.
\end{theorem}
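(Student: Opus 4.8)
The plan is to reduce everything to the single coarser proximity $\delta_2$ on the codomain and then invoke the invariance of the proximal distance under homotopies (Proposition \ref{p2}). First I would set $D_{\delta_1,\delta_2'}(f_2,g_2) = m$, so that $X$ admits a cover $\{V_1,\dots,V_m\}$ with $f_2|_{V_j} \simeq_{\delta_1,\delta_2'} g_2|_{V_j}$ for every $j$; if this distance is infinite there is nothing to prove.

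The key observation is that the hypothesis $\delta_2' > \delta_2$ lets a $\delta_2'$-homotopy descend to a $\delta_2$-homotopy. Concretely, each witnessing homotopy $G_j : V_j \times I \to (Y,\delta_2')$ is a pc-map, so by Lemma \ref{l1}(i) it is also a pc-map into $(Y,\delta_2)$; its endpoints are still $f_2|_{V_j}$ and $g_2|_{V_j}$ (the map $g_2$ lands in $(Y,\delta_2)$ as a pc-map by the same lemma, while $f_2$ does so by hypothesis). Hence $f_2|_{V_j} \simeq_{\delta_1,\delta_2} g_2|_{V_j}$ on the very same cover, which yields $D_{\delta_1,\delta_2}(f_2,g_2) \le m$.

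It remains to replace the pair $(f_2,g_2)$ by $(f_1,g_1)$ without changing the $\delta_2$-distance. Here I would again use Lemma \ref{l1}(i) to turn the given homotopy $g_1 \simeq_{\delta_1,\delta_2'} g_2$ into $g_1 \simeq_{\delta_1,\delta_2} g_2$, while $f_1 \simeq_{\delta_1,\delta_2} f_2$ is given outright; at this stage all four maps $f_1,f_2,g_1,g_2$ are genuine pc-maps into $(Y,\delta_2)$. Proposition \ref{p2} then gives $D_{\delta_1,\delta_2}(f_1,g_1) = D_{\delta_1,\delta_2}(f_2,g_2)$, and chaining with the previous inequality produces $D_{\delta_1,\delta_2}(f_1,g_1) \le m = D_{\delta_1,\delta_2'}(f_2,g_2)$, as required.

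I expect the only real subtlety — the step to be stated carefully rather than computed — to be the descent of homotopies: one must verify that the covering realizing $D_{\delta_1,\delta_2'}(f_2,g_2)$ can be reused verbatim for $\delta_2$, and that all four maps are legitimately pc-maps into the coarser $(Y,\delta_2)$ so that Proposition \ref{p2} applies. Everything else is a direct invocation of Lemma \ref{l1}(i) together with the homotopy-invariance of the proximal distance.
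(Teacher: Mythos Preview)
Your proposal is correct and follows essentially the same route as the paper. The only cosmetic difference is packaging: the paper works directly on each $V_j$ and chains the three $\delta_2$-homotopies $f_1|_{V_j}\simeq f_2|_{V_j}\simeq g_2|_{V_j}\simeq g_1|_{V_j}$ via transitivity (Theorem~11 of \cite{PetersTane2:2021}), whereas you first record $\text{D}_{\delta_1,\delta_2}(f_2,g_2)\le m$ and then invoke Proposition~\ref{p2} to pass to $(f_1,g_1)$; the uses of Lemma~\ref{l1}(i) are identical.
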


\begin{proof}
	Let D$_{\delta_{1},\delta_{2}^{'}}(f_{2},g_{2}) = m$. Then $X$ has a covering $\{V_{1},\cdots,V_{m}\}$ and \[f_{2}|_{V_{j}} \simeq_{\delta_{1},\delta_{2}^{'}} g_{2}|_{V_{j}}\] for all $j = 1,\cdots,n$. By Lemma \ref{l1} i), we have that 
	\[f_{2}|_{V_{j}} \simeq_{\delta_{1},\delta_{2}} g_{2}|_{V_{j}}.\] 
	$f_{1} \simeq_{\delta_{1},\delta_{2}} f_{2}$ and $g_{1} \simeq_{\delta_{1},\delta_{2}^{'}} g_{2}$ imply that \[f_{1}|_{V_{j}} \simeq_{\delta_{1},\delta_{2}} f_{2}|_{V_{j}} \ \ \text{and} \ \ g_{1}|_{V_{j}} \simeq_{\delta_{1},\delta_{2}^{'}} g_{2}|_{V_{j}},\] respectively. Using Lemma \ref{l1} i) again, we find that
    \[g_{1}|_{V_{j}} \simeq_{\delta_{1},\delta_{2}} g_{2}|_{V_{j}}.\] 
	Therefore, by Theorem 11 of \cite{PetersTane2:2021}, we get
	\[f_{1}|_{V_{j}} \simeq_{\delta_{1},\delta_{2}} g_{1}|_{V_{j}},\]
	which means that D$_{\delta_{1},\delta_{2}}(f_{1},g_{1}) \leq m$.
\end{proof}

\begin{theorem}\label{t3}
	Let $f_{1}$, $f_{2} : (X,\delta_{1}) \rightarrow (Y,\delta^{'})$ and $g_{1}$, $g_{2} : (X,\delta_{1}^{'}) \rightarrow (Y,\delta^{'})$ be pc-maps such that $f_{1} \simeq_{\delta_{1},\delta^{'}} f_{2}$ and $g_{1} \simeq_{\delta_{1}^{'},\delta^{'}} g_{2}$. If $\delta_{1}^{'} > \delta_{1}$, then D$_{\delta_{1}^{'},\delta^{'}}(f_{2},g_{2})$ is less than or equal to D$_{\delta_{1},\delta^{'}}(f_{1},g_{1})$.
\end{theorem}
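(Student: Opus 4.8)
The plan is to run the argument of Theorem \ref{t1} with the roles of domain and codomain interchanged, invoking the domain-refinement part Lemma \ref{l1} ii) in place of the codomain-refinement part Lemma \ref{l1} i). I would begin from the right-hand side: set $\mathrm{D}_{\delta_1,\delta'}(f_1,g_1) = m$, so that $X$ admits a covering $\{V_1,\dots,V_m\}$ with $f_1|_{V_j} \simeq_{\delta_1,\delta'} g_1|_{V_j}$ for every $j$. The target is to exhibit, over this same covering, proximal homotopies $f_2|_{V_j} \simeq_{\delta_1',\delta'} g_2|_{V_j}$, which immediately yields $\mathrm{D}_{\delta_1',\delta'}(f_2,g_2) \le m$ and hence the claim.

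First I would work entirely in the coarse domain proximity $\delta_1$. Restricting the homotopy $f_1 \simeq_{\delta_1,\delta'} f_2$ to $V_j$ gives $f_2|_{V_j} \simeq_{\delta_1,\delta'} f_1|_{V_j}$, and chaining this with $f_1|_{V_j} \simeq_{\delta_1,\delta'} g_1|_{V_j}$ through the transitivity of proximal homotopy (Theorem 11 of \cite{PetersTane2:2021}) produces $f_2|_{V_j} \simeq_{\delta_1,\delta'} g_1|_{V_j}$ for all $j$.

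Next I would promote this homotopy to the finer domain proximity $\delta_1'$. A $\simeq_{\delta_1,\delta'}$ homotopy is a pc-map $G : (V_j \times I, \tau) \to (Y,\delta')$, where $\tau$ is the product proximity built from $\delta_1$ restricted to $V_j$ together with the standard proximity on $I$. Since $\delta_1' > \delta_1$, Proposition \ref{prop1} shows that the corresponding product proximity $\tau'$ built from $\delta_1'$ is finer than $\tau$; Lemma \ref{l1} ii) then applies to $G$ and keeps it proximally continuous, so $f_2|_{V_j} \simeq_{\delta_1',\delta'} g_1|_{V_j}$. Combining this with the restriction $g_1|_{V_j} \simeq_{\delta_1',\delta'} g_2|_{V_j}$ of the hypothesis $g_1 \simeq_{\delta_1',\delta'} g_2$, by transitivity again, yields $f_2|_{V_j} \simeq_{\delta_1',\delta'} g_2|_{V_j}$, completing the count.

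The step I expect to be the main obstacle is this promotion of a homotopy from $\delta_1$ to $\delta_1'$: Lemma \ref{l1} ii) is phrased for ordinary pc-maps, so one must legitimately view the homotopy $G$ as such a map and verify that refining $\delta_1$ to $\delta_1'$ genuinely refines the product proximity on $V_j \times I$ (this is exactly where Proposition \ref{prop1} is needed, together with the fact that the proximity on the $I$-factor is left unchanged). A secondary point to be careful about is the well-definedness of the right-hand side $\mathrm{D}_{\delta_1,\delta'}(f_1,g_1)$, since $g_1$ is given only as a pc-map out of $(X,\delta_1')$; I would treat the existence of this distance as part of the standing hypothesis, namely that $g_1$ is also proximally continuous from the coarser $(X,\delta_1)$, consistent with the notation used in the statement.
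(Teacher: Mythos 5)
Your proposal is correct and follows essentially the same route as the paper's proof: both fix the covering realizing $\mathrm{D}_{\delta_{1},\delta^{'}}(f_{1},g_{1})=m$, use Lemma \ref{l1} ii) to promote homotopies from the coarser domain proximity $\delta_{1}$ to the finer $\delta_{1}^{'}$, and chain via the transitivity result (Theorem 11 of \cite{PetersTane2:2021}); the only difference is that you chain in $\delta_{1}$ first and promote once, while the paper promotes each homotopy separately and then chains in $\delta_{1}^{'}$. Your explicit justification of the promotion step (viewing the homotopy as a pc-map on $V_{j}\times I$ and refining the product proximity via Proposition \ref{prop1}) and your remark on the well-definedness of $\mathrm{D}_{\delta_{1},\delta^{'}}(f_{1},g_{1})$ are both points the paper glosses over, and are welcome additions.
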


\begin{proof}
	The proof uses a similar method to Theorem \ref{t1}. Let D$_{\delta_{1},\delta^{'}}(f_{1},g_{1}) = m$. Then $X$ has a covering $\{V_{1},\cdots,V_{m}\}$ and \[f_{1}|_{V_{j}} \simeq_{\delta_{1},\delta^{'}} g_{1}|_{V_{j}}\] for all $j = 1,\cdots,n$. By Lemma \ref{l1} ii), we have that 
	\[f_{1}|_{V_{j}} \simeq_{\delta_{1}^{'},\delta^{'}} g_{1}|_{V_{j}}.\] 
	$f_{1} \simeq_{\delta_{1},\delta^{'}} f_{2}$ and $g_{1} \simeq_{\delta_{1}^{'},\delta^{'}} g_{2}$ imply that \[f_{1}|_{V_{j}} \simeq_{\delta_{1},\delta^{'}} f_{2}|_{V_{j}} \ \ \text{and} \ \ g_{1}|_{V_{j}} \simeq_{\delta_{1}^{'},\delta^{'}} g_{2}|_{V_{j}},\] respectively. Using by Lemma \ref{l1} ii) again, we find that
	\[f_{1}|_{V_{j}} \simeq_{\delta_{1}^{'},\delta^{'}} f_{2}|_{V_{j}}.\] 
	Therefore, by Theorem 11 of \cite{PetersTane2:2021}, we get
	\[f_{2}|_{V_{j}} \simeq_{\delta_{1}^{'},\delta^{'}} g_{2}|_{V_{j}},\]
	which means that D$_{\delta_{1}^{'},\delta^{'}}(f_{2},g_{2}) \leq m$.
\end{proof}

\quad Recall that TC of any proximity space is stated by the proximal Schwarz genus of a proximal fibration. The fact that there is a close relation between genus and D(f,g) in proximity spaces means that the proximal topological complexity can also be expressed over proximal homotopic distance. The following theorem clearly reveals the suggested relation.

\begin{theorem}\label{t2}
	Given two pc-maps $f$, $g : (X,\delta_{1}) \rightarrow (Y,\delta_{2})$ such that the diagram 
	\begin{displaymath}
		\xymatrix{
			E \ar[r]^{\pi_{2}} \ar[d]_{\pi_{1}} &
			PY \ar[d]^{\pi} \\
			X \ar[r]_{(f,g)} & Y \times Y}
	\end{displaymath}
	commutes, where $\pi_{1} : (E,\delta_{3}) \rightarrow (X,\delta_{1})$ is the pullback of the proximal path fibration on $Y$ by $(f,g) : (X,\delta_{1}) \rightarrow (Y \times Y,\delta^{'}_{2})$. Then 
	\begin{eqnarray*}
		\text{D}_{\delta_{1},\delta_{2}}(f,g) = \text{genus}_{\delta_{3},\delta_{1}}(\pi_{1}).
	\end{eqnarray*}
\end{theorem}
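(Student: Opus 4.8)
The plan is to show both inequalities $\text{D}_{\delta_{1},\delta_{2}}(f,g) \leq \text{genus}_{\delta_{3},\delta_{1}}(\pi_{1})$ and $\text{genus}_{\delta_{3},\delta_{1}}(\pi_{1}) \leq \text{D}_{\delta_{1},\delta_{2}}(f,g)$ by translating local sections of the pullback fibration into local proximal homotopies between $f$ and $g$, and vice versa. The fundamental bridge is the standard observation (adapted to the proximal setting) that a local section $s_{i} : U_{i} \to E$ of $\pi_{1}$ over an open set $U_{i} \subset X$ corresponds, via the projection $\pi_{2} : E \to PY$ and the pullback description $E = \{(x,\gamma) \in X \times PY : \gamma(0) = f(x),\ \gamma(1) = g(x)\}$, exactly to a proximal homotopy on $U_{i}$ between $f|_{U_{i}}$ and $g|_{U_{i}}$. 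I would first make this dictionary precise, recording explicitly what the pullback proximity space $E$ and its proximity $\delta_{3}$ are.

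First I would prove $\text{genus}_{\delta_{3},\delta_{1}}(\pi_{1}) \leq \text{D}_{\delta_{1},\delta_{2}}(f,g)$. Suppose $\text{D}_{\delta_{1},\delta_{2}}(f,g) = m$, so $X$ has a cover $\{V_{1},\dots,V_{m}\}$ with proximal homotopies $H_{j} : V_{j} \times I \to Y$ from $f|_{V_{j}}$ to $g|_{V_{j}}$ for each $j$. For each $x \in V_{j}$ the assignment $t \mapsto H_{j}(x,t)$ is a proximal path in $Y$ from $f(x)$ to $g(x)$, so I can define $s_{j} : V_{j} \to E$ by $s_{j}(x) = (x,\, t \mapsto H_{j}(x,t))$. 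The point lands in $E$ precisely because the endpoints are $f(x)$ and $g(x)$, matching the pullback constraint. The content is to check that $s_{j}$ is a pc-map: this follows from the proximal continuity of $H_{j}$ together with the definition of the proximal mapping-space proximity on $PY$ and the product proximity $\delta_{3}$ on $E$. Since $\pi_{1}(s_{j}(x)) = x$ by construction, each $s_{j}$ is a local section, giving a cover of $X$ by $m$ sets admitting sections of $\pi_{1}$, whence $\text{genus}_{\delta_{3},\delta_{1}}(\pi_{1}) \leq m$.

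Next I would prove the reverse inequality $\text{D}_{\delta_{1},\delta_{2}}(f,g) \leq \text{genus}_{\delta_{3},\delta_{1}}(\pi_{1})$ by running the correspondence backwards. Assume $\text{genus}_{\delta_{3},\delta_{1}}(\pi_{1}) = m$, so $X$ has a cover $\{U_{1},\dots,U_{m}\}$ with pc-maps $s_{i} : U_{i} \to E$ satisfying $\pi_{1} \circ s_{i} = 1_{U_{i}}$. Writing $s_{i}(x) = (x, \gamma_{x}^{i})$ with $\gamma_{x}^{i} \in PY$, I set $H_{i}(x,t) = (\pi_{2}\circ s_{i})(x)(t) = \gamma_{x}^{i}(t)$, which by the pullback condition satisfies $H_{i}(x,0) = f(x)$ and $H_{i}(x,1) = g(x)$. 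Proximal continuity of $H_{i}$ comes from composing the pc-map $s_{i}$ with $\pi_{2}$ (a projection, hence a pc-map) and then applying the proximal evaluation map, which the excerpt records as a pc-map; so $H_{i}$ is a proximal homotopy from $f|_{U_{i}}$ to $g|_{U_{i}}$, giving $\text{D}_{\delta_{1},\delta_{2}}(f,g) \leq m$. Combining the two inequalities yields the equality.

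The main obstacle I anticipate is not the bijective dictionary between sections and homotopies, which is essentially formal, but verifying proximal continuity in both directions with the correct proximities. Specifically, one must be careful that the proximity $\delta_{3}$ on the pullback $E$ is the one induced as a subspace of the product $X \times PY$ (with $PY$ carrying the proximal mapping-space proximity of Subsection \ref{subsec:subsec3}), and that the adjunction-type identification $Y^{X\times I} \cong (Y^{I})^{X}$ — guaranteed by the exponential-law isomorphism already established in this excerpt — is what legitimately converts a pc-map $H_{i} : U_{i}\times I \to Y$ into a pc-map $U_{i} \to PY$ and back. Pinning down these proximity structures so that the evaluation map and the exponential correspondence are genuinely proximally continuous is where the real care lies; once that is in place, both inequalities follow immediately from the constructions above.
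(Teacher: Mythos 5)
Your proposal is correct and follows essentially the same route as the paper's proof: both directions are obtained from the standard dictionary between local sections of the pullback $E=\{(x,\alpha): \alpha(0)=f(x),\ \alpha(1)=g(x)\}$ and local proximal homotopies between $f$ and $g$. In fact you are more careful than the paper, which simply asserts the proximal continuity of the constructed sections and homotopies, whereas you correctly flag that these verifications rest on the mapping-space proximity on $PY$, the evaluation map, and the exponential-law isomorphism.
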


\begin{proof}
	Let $E = \{(x,\alpha) \subset X \times PY : \alpha(0) = f(x), \ \alpha(1) = g(x)\}$ and assume that genus$_{\delta_{3},\delta_{1}}(\pi_{1}) = m$. Then $X = U_{1} \cup \cdots \cup U_{m}$ and for each $i = 1,\cdots,m$, $s_{i} : U_{i} \rightarrow E$, $s_{i}(x) = (x,\alpha)$ is a pc-map such that $\pi_{1} \circ s = 1_{U_{i}}$. Define a proximal homotopy $G : U_{i} \times I \rightarrow Y \times Y$ with $G(x,t) = \alpha(t)$. Thus, we have that $f|_{U_{i}} \simeq_{\delta_{1},\delta_{2}} g|_{U_{i}}$ for each $i$. This means that D$_{\delta_{1},\delta_{2}}(f,g) \leq m$. On the other hand, if D$_{\delta_{1},\delta_{2}}(f,g) = m$, then $X$ has a cover $\{U_{1},\cdots,U_{m}\}$, and for all $i$, there is a proximal homotopy $F : U_{i} \times I \rightarrow Y$ such that $F(x,0) = f|_{U_{i}}$ and $F(x,1) = g|_{U_{i}}$. Define a pc-map $s_{i} : U_{i} \rightarrow E$ by $s_{i}(x) = (x,F_{x}(-))$, for each $i$. Therefore, we get \[\pi_{1} \circ s_{i}(x) = \pi_{1}(x,F_{x}(-)) = x = 1_{U_{i}}(x).\] This shows that genus$_{\delta_{3},\delta_{1}}(\pi_{1}) \leq m$.
\end{proof}

\begin{corollary}\label{c2}
	The proximal topological complexity of $(X,\delta)$ is D$_{\delta^{'},\delta}(p_{1},p_{2})$ for the projection maps $p_{1}, p_{2} : (X \times X,\delta^{'}) \rightarrow (X,\delta)$.
\end{corollary}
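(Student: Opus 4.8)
The plan is to derive Corollary~\ref{c2} as a direct specialization of Theorem~\ref{t2}, choosing the pc-maps $f$ and $g$ to be the two projection maps and the target space appropriately. Specifically, I would set $Y = X$, take $\delta_{2} = \delta$, and let the domain be $X \times X$ with its product proximity $\delta^{'}$. The two pc-maps in Theorem~\ref{t2} become $f = p_{1}$ and $g = p_{2}$, where $p_{1}, p_{2} : (X \times X, \delta^{'}) \rightarrow (X, \delta)$ are the first and second projections. With these identifications, Theorem~\ref{t2} yields the equation $\text{D}_{\delta^{'},\delta}(p_{1}, p_{2}) = \text{genus}_{\delta_{3},\delta^{'}}(\pi_{1})$, where $\pi_{1}$ is the pullback of the proximal path fibration $\pi : PX \rightarrow X \times X$ along the map $(p_{1}, p_{2}) : (X \times X, \delta^{'}) \rightarrow (X \times X, \delta^{'}_{2})$.

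The key observation that closes the argument is that $(p_{1}, p_{2})$ is precisely the identity map on $X \times X$. Indeed, for any point $(x_{0}, x_{1}) \in X \times X$ we have $(p_{1}, p_{2})(x_{0}, x_{1}) = (p_{1}(x_{0}, x_{1}), p_{2}(x_{0}, x_{1})) = (x_{0}, x_{1})$. Therefore the map $(f, g)$ in the commuting square of Theorem~\ref{t2} is the identity on $X \times X$, and the pullback of $\pi$ along the identity is (proximally isomorphic to) $\pi$ itself. Consequently the pullback fibration $\pi_{1} : (E, \delta_{3}) \rightarrow (X \times X, \delta^{'})$ coincides with the proximal path fibration $\pi : PX \rightarrow X \times X$, and its proximal Schwarz genus is by definition exactly $\text{TC}(X, \delta)$.

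Stringing these two facts together gives $\text{TC}(X,\delta) = \text{genus}_{\delta_{3},\delta^{'}}(\pi_{1}) = \text{D}_{\delta^{'},\delta}(p_{1}, p_{2})$, which is the claimed identity. I would also invoke the definition of $\text{TC}(X,\delta)$ as the proximal Schwarz genus of $\pi$ and Theorem~\ref{t2} as the main engine, so the corollary truly is immediate.

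The only subtle point, and the step I would treat most carefully, is justifying that the pullback of the proximal path fibration along the identity map is proximally isomorphic to the original fibration, so that their Schwarz genera agree. This is a standard naturality property of pullbacks, and the excerpt already records that pullbacks of proximal fibrations are again proximal fibrations; one needs only to note that pulling back along $1_{X \times X}$ returns a space proximally isomorphic to $PX$ under which $\pi_{1}$ corresponds to $\pi$, so that the genus is preserved. Since proximal Schwarz genus is defined via the existence of local pc-sections and is thus a proximal isomorphism invariant, this identification is exactly what is needed. I do not anticipate any heavy computation; the entire content is the recognition that $(p_{1}, p_{2}) = 1_{X \times X}$ together with a clean appeal to Theorem~\ref{t2}.
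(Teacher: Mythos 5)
Your proposal is correct and follows exactly the paper's own argument: specialize Theorem~\ref{t2} to $f = p_{1}$, $g = p_{2}$, note that $(p_{1},p_{2}) = 1_{X \times X}$, and identify the pullback of the proximal path fibration along the identity with the fibration itself, whose proximal Schwarz genus is $\text{TC}(X,\delta)$. The extra care you take in justifying that pulling back along the identity preserves the Schwarz genus is a detail the paper leaves implicit, but the route is the same.
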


\begin{proof}
	Let $f = p_{1}$ and $g = p_{2}$ in Theorem \ref{t2}. This means that $(f,g) = 1_{X \times X}$. Thus, we conclude that TC$(X,\delta) =$ D$_{\delta^{'},\delta}(p_{1},p_{2})$.
\end{proof}

\begin{corollary}\label{c1}
	For two pc-maps $f, g : (X,\delta_{1}) \rightarrow (Y,\delta_{2})$, we have that D$_{\delta_{1},\delta_{2}}(f,g)$ is less than or equal to TC$(Y,\delta_{2})$.
\end{corollary}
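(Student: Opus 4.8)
The plan is to derive the bound from Theorem \ref{t2} together with the elementary principle that the proximal Schwarz genus cannot increase when a proximal fibration is pulled back. Recall that TC$(Y,\delta_{2})$ is, by definition, the proximal Schwarz genus of the proximal path fibration $\pi : PY \rightarrow Y \times Y$, $\pi(\alpha) = (\alpha(0),\alpha(1))$. On the other hand, Theorem \ref{t2} identifies D$_{\delta_{1},\delta_{2}}(f,g)$ with genus$_{\delta_{3},\delta_{1}}(\pi_{1})$, where $\pi_{1} : (E,\delta_{3}) \rightarrow (X,\delta_{1})$ is precisely the pullback of $\pi$ along the pc-map $(f,g) : (X,\delta_{1}) \rightarrow (Y \times Y, \delta_{2}^{'})$. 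Thus it is enough to show that genus$_{\delta_{3},\delta_{1}}(\pi_{1})$ is at most the proximal Schwarz genus of $\pi$, which is TC$(Y,\delta_{2})$.

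To prove this inequality I would fix an optimal cover witnessing TC$(Y,\delta_{2}) = n$, namely $Y \times Y = U_{1} \cup \cdots \cup U_{n}$ together with pc-maps $s_{i} : U_{i} \rightarrow PY$ satisfying $\pi \circ s_{i} = 1_{U_{i}}$ for each $i$. Setting $V_{i} = (f,g)^{-1}(U_{i})$ produces a cover $\{V_{1},\ldots,V_{n}\}$ of $X$, since the $U_{i}$ cover $Y \times Y$ and $(f,g)$ is a pc-map; each $V_{i}$ carries its induced proximity as a subspace of $(X,\delta_{1})$. Over each $V_{i}$ I would define the candidate section $\widetilde{s}_{i} : V_{i} \rightarrow E$ by $\widetilde{s}_{i}(x) = (x, s_{i}(f(x),g(x)))$. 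Because $\pi \circ s_{i} = 1_{U_{i}}$ and $\pi(\alpha) = (\alpha(0),\alpha(1))$, the path $s_{i}(f(x),g(x))$ runs from $f(x)$ to $g(x)$, so $\widetilde{s}_{i}(x)$ genuinely lands in $E = \{(x,\alpha) : \alpha(0) = f(x),\ \alpha(1) = g(x)\}$, and the relation $\pi_{1} \circ \widetilde{s}_{i} = 1_{V_{i}}$ is then immediate.

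The step I expect to require the most care, though it is essentially routine, is verifying that each $\widetilde{s}_{i}$ is a pc-map. This should follow by composition: the pairing $(f,g)$ is a pc-map, $s_{i}$ is a pc-map by hypothesis, so $x \mapsto s_{i}(f(x),g(x))$ is a pc-map into $PY$; pairing it with the identity on $X$ yields a pc-map whose image lies in the subspace $E$, taken with respect to the pullback proximity $\delta_{3}$. Concretely, one unwinds the product proximity on $X \times PY$ restricted to $E$ and checks that nearness of subsets in $V_{i}$ is transported to nearness of their images, using proximal continuity of each factor. Once every $\widetilde{s}_{i}$ is established as a pc-section over $V_{i}$, the cover $\{V_{1},\ldots,V_{n}\}$ witnesses genus$_{\delta_{3},\delta_{1}}(\pi_{1}) \leq n =$ TC$(Y,\delta_{2})$, and combining with Theorem \ref{t2} gives D$_{\delta_{1},\delta_{2}}(f,g) =$ genus$_{\delta_{3},\delta_{1}}(\pi_{1}) \leq$ TC$(Y,\delta_{2})$, as required. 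Alternatively, one could bypass the pullback identification of Theorem \ref{t2} and argue directly, defining on each $V_{i}$ the proximal homotopy $H_{i}(x,t) = s_{i}(f(x),g(x))(t)$ from $f|_{V_{i}}$ to $g|_{V_{i}}$; this produces the same cover of $X$ and hence the same bound on D$_{\delta_{1},\delta_{2}}(f,g)$.
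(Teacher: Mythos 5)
Your proposal is correct and follows essentially the same route as the paper: identify D$_{\delta_{1},\delta_{2}}(f,g)$ with the proximal Schwarz genus of the pullback $\pi_{1}$ via Theorem \ref{t2}, and then use the fact that the genus of a pullback is bounded by the genus of the original fibration $\pi$, whose genus is TC$(Y,\delta_{2})$. The only difference is that you spell out the pullback-of-covers-and-sections argument explicitly, whereas the paper simply asserts that $\pi_{1}$ is the pullback of $\pi$ and invokes the inequality without proof.
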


\begin{proof}
	The proximal Schwarz genus of $\pi : PY \rightarrow Y \times Y$ is TC$(Y,\delta_{2})$ and by Theorem \ref{t2} the proximal Schwarz genus of $\pi_{1} : E \rightarrow X$ is D$_{\delta_{1},\delta_{2}}(f,g)$. Moreover, $\pi_{1}$ is the pullback of $\pi$. This shows that D$_{\delta_{1},\delta_{2}}(f,g) \leq$ TC$(Y,\delta_{2})$.
\end{proof}

\begin{proposition}\label{p1}
	Let $f, g : (X,\delta_{1}) \rightarrow (Y,\delta_{2})$ be pc-maps.
	
	\textbf{i)} If $h : (Y,\delta_{2}) \rightarrow (Z,\delta_{3})$ is a pc-map, then
	\begin{eqnarray*}
		\text{D}_{\delta_{1},\delta_{3}}(h \circ f,h \circ g) \leq \text{D}_{\delta_{1},\delta_{2}}(f,g). 
	\end{eqnarray*}
	\textbf{ii)} If $k : (Z,\delta_{3}) \rightarrow (X,\delta_{1})$ is a pc-map, then
	\begin{eqnarray*}
		\text{D}_{\delta_{3},\delta_{2}}(f \circ k,g \circ k) \leq \text{D}_{\delta_{1},\delta_{2}}(f,g). 
	\end{eqnarray*}
\end{proposition}

\begin{proof}
	\textbf{i)} Let D$_{\delta_{1},\delta_{2}}(f,g) = m$. Then $X$ can be covered by $m$ subsets $V_{1},\cdots V_{m}$, and for all $j = 1,\cdots,n$, we have that $f|_{V_{j}} \simeq_{\delta_{1},\delta_{2}} g|_{V_{j}}$. Since the composition of two pc-maps is a pc-map, we get
	\begin{eqnarray*}
		(h \circ f)|_{V_{j}} \simeq_{\delta_{1},\delta_{3}} h \circ f|_{V_{j}} \simeq_{\delta_{1},\delta_{3}} h \circ g|_{V_{j}} \simeq_{\delta_{1},\delta_{3}} (h \circ g)|_{V_{j}}.
	\end{eqnarray*}
    This proves that D$_{\delta_{1},\delta_{3}}(h \circ f,h \circ g) \leq m$.
    
    \textbf{ii)} Let D$_{\delta_{1},\delta_{2}}(f,g) = m$. Then $X$ can be written as the union of $m$ subsets $V_{1},\cdots V_{m}$ for which $f|_{V_{j}} \simeq_{\delta_{1},\delta_{2}} g|_{V_{j}}$ for all $j = 1,\cdots,n$. Therefore, $Z$ can be written as the union of $\{k^{-1}(V_{1}),\cdots,k^{-1}(V_{m})\}$ and the restriction $k_{j} : k^{-1}(V_{j}) \rightarrow Z$ is the composition $i \circ k$, where $i : V_{j} \rightarrow X$ is the inclusion map. Since the composition of two pc-maps is a pc-map, we obtain 
    \begin{eqnarray*}
    	(f \circ k)|_{k^{-1}(V_{j})} &\simeq_{\delta_{3},\delta_{2}}& f|_{k^{-1}(V_{j})} \circ k_{j} \simeq_{\delta_{3},\delta_{2}} g|_{k^{-1}(V_{j})} \circ k_{j} \\ &\simeq_{\delta_{3},\delta_{2}}& g|_{k^{-1}(V_{j})} \circ (i \circ k)|_{k^{-1}(V_{j})} \simeq_{\delta_{3},\delta_{2}} (g \circ (i \circ k))|_{k^{-1}(V_{j})} \\
    	&\simeq_{\delta_{3},\delta_{2}}& (g \circ k)|_{k^{-1}(V_{j})}.
    \end{eqnarray*}
    This proves that D$_{\delta_{3},\delta_{2}}(f \circ k,g \circ k) \leq m$.
\end{proof}

\begin{proposition}\label{p3}
	Let $f, g : (X,\delta_{1}) \rightarrow (Y,\delta_{2})$ be two pc-maps.
	
	\textbf{i)} If $h : (Y,\delta_{2}) \rightarrow (Z,\delta_{3})$ admits a left proximal homotopy inverse, then \[\text{D}_{\delta_{1},\delta_{3}}(h \circ f,h \circ g) = \text{D}_{\delta_{1},\delta_{2}}(f,g).\]
	
	\textbf{ii)} If $k : (Z,\delta_{3}) \rightarrow (X,\delta_{1})$ admits a right proximal homotopy inverse, then \[\text{D}_{\delta_{3},\delta_{2}}(f \circ k,g \circ k) = \text{D}_{\delta_{1},\delta_{2}}(f,g).\]
\end{proposition}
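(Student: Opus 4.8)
The plan is to prove each equality as a pair of opposing inequalities: the ``easy'' direction comes straight from Proposition \ref{p1}, while the reverse direction is extracted by feeding the homotopy inverse back through Proposition \ref{p1} and then cancelling the resulting composite with Proposition \ref{p2}.

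For part \textbf{i)}, since $h$ is itself a pc-map, Proposition \ref{p1} i) gives at once $\text{D}_{\delta_{1},\delta_{3}}(h \circ f, h \circ g) \leq \text{D}_{\delta_{1},\delta_{2}}(f,g)$. For the reverse inequality I would take a left proximal homotopy inverse $h' : (Z,\delta_{3}) \rightarrow (Y,\delta_{2})$ of $h$, so that $h' \circ h \simeq_{\delta_{2},\delta_{2}} 1_{Y}$. Applying Proposition \ref{p1} i) to the pc-map $h'$ and the pair $h \circ f$, $h \circ g$ yields $\text{D}_{\delta_{1},\delta_{2}}(h' \circ h \circ f, h' \circ h \circ g) \leq \text{D}_{\delta_{1},\delta_{3}}(h \circ f, h \circ g)$. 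Composing $h' \circ h \simeq 1_{Y}$ on the right with $f$ and with $g$ gives $(h' \circ h) \circ f \simeq_{\delta_{1},\delta_{2}} f$ and $(h' \circ h) \circ g \simeq_{\delta_{1},\delta_{2}} g$, so Proposition \ref{p2} identifies $\text{D}_{\delta_{1},\delta_{2}}(h' \circ h \circ f, h' \circ h \circ g)$ with $\text{D}_{\delta_{1},\delta_{2}}(f,g)$. Chaining the three relations produces $\text{D}_{\delta_{1},\delta_{2}}(f,g) \leq \text{D}_{\delta_{1},\delta_{3}}(h \circ f, h \circ g)$, and combined with the first inequality this gives the equality.

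For part \textbf{ii)}, the argument is the mirror image, with precomposition in place of postcomposition. Proposition \ref{p1} ii) applied to the pc-map $k$ gives $\text{D}_{\delta_{3},\delta_{2}}(f \circ k, g \circ k) \leq \text{D}_{\delta_{1},\delta_{2}}(f,g)$. Taking a right proximal homotopy inverse $k' : (X,\delta_{1}) \rightarrow (Z,\delta_{3})$ of $k$, so that $k \circ k' \simeq_{\delta_{1},\delta_{1}} 1_{X}$, and applying Proposition \ref{p1} ii) to $k'$ with the pair $f \circ k$, $g \circ k$ yields $\text{D}_{\delta_{1},\delta_{2}}(f \circ k \circ k', g \circ k \circ k') \leq \text{D}_{\delta_{3},\delta_{2}}(f \circ k, g \circ k)$. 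Since $k \circ k' \simeq 1_{X}$ gives $f \circ (k \circ k') \simeq_{\delta_{1},\delta_{2}} f$ and $g \circ (k \circ k') \simeq_{\delta_{1},\delta_{2}} g$ after composing on the left, Proposition \ref{p2} again collapses the left-hand side to $\text{D}_{\delta_{1},\delta_{2}}(f,g)$, delivering the reverse inequality and hence the equality.

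The only point requiring care is the implicit fact that proximal homotopy is stable under composition on either side, i.e.\ that $u \simeq v$ forces $w \circ u \simeq w \circ v$ and $u \circ w \simeq v \circ w$ for pc-maps $w$; this is precisely the mechanism already used in the proof of Proposition \ref{p1}, where one observes that the composite of a proximal homotopy with a pc-map is again a proximal homotopy (via the Gluing Lemma and the closure of pc-maps under composition). Thus no new machinery is needed, and the remainder is simply bookkeeping to confirm that the proximity labels $\delta_{1},\delta_{2},\delta_{3}$ attached to each homotopic distance match under the substitutions prescribed by the two parts of Proposition \ref{p1}.
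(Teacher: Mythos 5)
Your proposal is correct and follows essentially the same route as the paper: the easy inequality from Proposition \ref{p1}, then applying Proposition \ref{p1} again to the homotopy inverse and collapsing the composite $h'\circ h$ (resp.\ $k\circ k'$) with Proposition \ref{p2} to close the chain of inequalities. Your explicit remark that proximal homotopy must be stable under pre- and post-composition with a pc-map is a point the paper leaves implicit, but it does not change the argument.
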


\begin{proof}
	\textbf{i)} Let $h^{'} : (Z,\delta_{3}) \rightarrow (Y,\delta_{2})$ be a pc-map with the property $h^{'} \circ h \simeq_{\delta_{2},\delta_{2}} 1_{Y}$. Then we have
	\begin{eqnarray*}
		\text{D}_{\delta_{1},\delta_{2}}(f,g) \geq \text{D}_{\delta_{1},\delta_{3}}(h \circ f,h \circ g) \geq \text{D}_{\delta_{1},\delta_{2}}(h^{'} \circ h \circ f,h^{'} \circ h \circ g) = \text{D}_{\delta_{1},\delta_{2}}(f,g)
	\end{eqnarray*}
	by Proposition \ref{p1} i) and Proposition \ref{p2}.
	
	\textbf{ii)} Let $k^{'} : (X,\delta_{1}) \rightarrow (Z,\delta_{3})$ be a pc-map with the property $k \circ k^{'} \simeq_{\delta_{1},\delta_{1}} 1_{X}$. Then we get
	\begin{eqnarray*}
		\text{D}_{\delta_{1},\delta_{2}}(f,g) \geq \text{D}_{\delta_{3},\delta_{2}}(f \circ k,g \circ k) \geq \text{D}_{\delta_{1},\delta_{2}}(f \circ k \circ k^{'},g \circ k \circ k^{'}) = \text{D}_{\delta_{1},\delta_{2}}(f,g)
	\end{eqnarray*}
	by Proposition \ref{p1} i) and Proposition \ref{p2}.
\end{proof}

\begin{theorem}\label{t4}
	Let $f, g : (X,\delta_{1}) \rightarrow (Y,\delta_{2})$ and $f^{'}, g^{'} : (X^{'},\delta_{1}^{'}) \rightarrow (Y^{'},\delta_{2}^{'})$ be any pc-maps. If $h : (X^{'},\delta_{1}^{'}) \rightarrow (X,\delta_{1})$ and $k : (Y,\delta_{2}) \rightarrow (Y^{'},\delta_{2}^{'})$ are proximal homotopy equivalences such that the diagram
	\begin{displaymath}
		\xymatrix{
			X \ar@<1ex>[r]_{g} \ar@<2ex>[r]^{f} &
			Y \ar[d]^{k} \\
			X^{'} \ar@<1ex>[r]_{g^{'}} \ar@<2ex>[r]^{f^{'}} \ar[u]_{h} & Y^{'}}
	\end{displaymath}
	commutes, then D$_{\delta_{1},\delta_{2}}(f,g) =$ D$_{\delta_{1}^{'},\delta_{2}^{'}}(f^{'},g^{'})$.
\end{theorem}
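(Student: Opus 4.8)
The plan is to reduce this statement to a double application of Proposition \ref{p3}. First I would read off the commutativity of the square. Since the left vertical arrow $h$ points upward from $X^{'}$ to $X$ while the right vertical arrow $k$ points downward from $Y$ to $Y^{'}$, commutativity records the two identities $f^{'} = k \circ f \circ h$ and $g^{'} = k \circ g \circ h$; note that all three composites are pc-maps, since a composite of pc-maps is a pc-map. This lets me rewrite $\mathrm{D}_{\delta_{1}^{'},\delta_{2}^{'}}(f^{'},g^{'})$ as $\mathrm{D}_{\delta_{1}^{'},\delta_{2}^{'}}(k \circ f \circ h,\, k \circ g \circ h)$, after which the two homotopy equivalences can be stripped off one at a time.

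Next I would peel off the outer map $k$. Because $k : (Y,\delta_{2}) \to (Y^{'},\delta_{2}^{'})$ is a proximal homotopy equivalence it possesses, in particular, a left proximal homotopy inverse, so Proposition \ref{p3} i) applies to the post-composition $k \circ (-)$ and gives
\[
\mathrm{D}_{\delta_{1}^{'},\delta_{2}^{'}}(k \circ f \circ h,\, k \circ g \circ h) = \mathrm{D}_{\delta_{1}^{'},\delta_{2}}(f \circ h,\, g \circ h).
\]
Then I would peel off the inner map $h$. Because $h : (X^{'},\delta_{1}^{'}) \to (X,\delta_{1})$ is a proximal homotopy equivalence it possesses, in particular, a right proximal homotopy inverse, so Proposition \ref{p3} ii) applies to the pre-composition $(-) \circ h$ and gives
\[
\mathrm{D}_{\delta_{1}^{'},\delta_{2}}(f \circ h,\, g \circ h) = \mathrm{D}_{\delta_{1},\delta_{2}}(f,g).
\]
Chaining these two equalities through the commutativity relations of the first step yields $\mathrm{D}_{\delta_{1}^{'},\delta_{2}^{'}}(f^{'},g^{'}) = \mathrm{D}_{\delta_{1},\delta_{2}}(f,g)$, which is the claim.

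The only real care needed — and what I would flag as the main (minor) obstacle — is bookkeeping: matching the correct one-sided homotopy inverse to each reduction (a left inverse for the post-composed $k$ in part i), a right inverse for the pre-composed $h$ in part ii)), and tracking which proximity index changes at each step. Since $h$ and $k$ are full proximal homotopy equivalences, both one-sided inverses exist automatically, so each invocation of Proposition \ref{p3} is legitimate and there is no genuine difficulty beyond this alignment. If one preferred not to cite Proposition \ref{p3}, the same two reductions could be obtained directly from Proposition \ref{p1} together with the homotopy-invariance of the distance (Proposition \ref{p2}), exactly as in the proof of Proposition \ref{p3}.
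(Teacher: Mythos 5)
Your proposal is correct and follows essentially the same route as the paper: both read off $f^{'} = k \circ f \circ h$ and $g^{'} = k \circ g \circ h$ from the commuting square and then strip $k$ and $h$ in turn via Proposition \ref{p3} (the paper additionally cites Proposition \ref{p2} for the first rewriting, which your version handles by noting the strict commutativity). Your bookkeeping of which one-sided inverse is needed at each step matches the paper's use of the two parts of Proposition \ref{p3}.
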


\begin{proof}
	Let $h$ and $k$ be proximal homotopy equivalences, i.e., they admit both left and right proximal homotopy inverses. From Proposition \ref{p2} and Proposition \ref{p3}, respectively, we obtain
	\begin{eqnarray*}
		\text{D}_{\delta_{1}^{'},\delta_{2}^{'}}(f^{'},g^{'}) = \text{D}_{\delta_{1}^{'},\delta_{2}^{'}}(k \circ f \circ h,k \circ g \circ h) = \text{D}_{\delta_{1}^{'},\delta_{2}}(f \circ h,g \circ h) = \text{D}_{\delta_{1},\delta_{2}}(f,g).
	\end{eqnarray*}
\end{proof}

\quad For ease of computation, it is important that TC$(X,\delta)$ is a proximal homotopy invariant. It is quite simple to derive this result from the previous theorem, which highly shortens the way of TC computations:

\begin{corollary}
	The proximal topological complexity is an invariant of proximal homotopy.
\end{corollary}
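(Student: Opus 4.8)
The plan is to reduce the statement to two tools already available: the identification of proximal TC as a proximal homotopic distance (Corollary \ref{c2}) and the invariance of that distance under pre/post-composition with proximal homotopy equivalences (Proposition \ref{p3}). So suppose $\phi : (X,\delta) \rightarrow (X',\delta')$ is a proximal homotopy equivalence with proximal homotopy inverse $\psi : (X',\delta') \rightarrow (X,\delta)$, so that $\psi \circ \phi \simeq_{\delta,\delta} 1_{X}$ and $\phi \circ \psi \simeq_{\delta',\delta'} 1_{X'}$. Writing $\hat{\delta}$ for the product proximity on $X \times X$ induced by $\delta$ and $\hat{\delta}'$ for the one on $X' \times X'$ induced by $\delta'$, Corollary \ref{c2} gives TC$(X,\delta) = \text{D}_{\hat{\delta},\delta}(p_{1},p_{2})$ and TC$(X',\delta') = \text{D}_{\hat{\delta}',\delta'}(p_{1}',p_{2}')$ for the respective projection pairs $p_{1},p_{2} : X \times X \rightarrow X$ and $p_{1}',p_{2}' : X' \times X' \rightarrow X'$. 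Thus it suffices to prove $\text{D}_{\hat{\delta}',\delta'}(p_{1}',p_{2}') = \text{D}_{\hat{\delta},\delta}(p_{1},p_{2})$.

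The engine of the argument is the \emph{strict} naturality of projections under $\phi \times \phi$: for $i = 1,2$ one has $p_{i}' \circ (\phi \times \phi) = \phi \circ p_{i}$ on the nose, since $p_{i}'(\phi(a),\phi(b)) = \phi(p_{i}(a,b))$. First I would record that $\phi \times \phi : (X \times X,\hat{\delta}) \rightarrow (X' \times X',\hat{\delta}')$ is a pc-map admitting the right proximal homotopy inverse $\psi \times \psi$, because $(\phi \times \phi)\circ(\psi \times \psi) = (\phi\circ\psi)\times(\phi\circ\psi) \simeq_{\hat{\delta}',\hat{\delta}'} 1_{X' \times X'}$. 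Granting this, Proposition \ref{p3} ii) (precomposition by a map with a right homotopy inverse) and then the naturality identity yield
\[
\text{D}_{\hat{\delta}',\delta'}(p_{1}',p_{2}') = \text{D}_{\hat{\delta},\delta'}\big(p_{1}'\circ(\phi\times\phi),\,p_{2}'\circ(\phi\times\phi)\big) = \text{D}_{\hat{\delta},\delta'}(\phi\circ p_{1},\,\phi\circ p_{2}).
\]
Since $\phi : (X,\delta) \rightarrow (X',\delta')$ has the left proximal homotopy inverse $\psi$, Proposition \ref{p3} i) (postcomposition by a map with a left homotopy inverse) then gives
\[
\text{D}_{\hat{\delta},\delta'}(\phi\circ p_{1},\,\phi\circ p_{2}) = \text{D}_{\hat{\delta},\delta}(p_{1},p_{2}).
\]
Chaining these equalities with Corollary \ref{c2} produces TC$(X',\delta') = $ TC$(X,\delta)$, as desired. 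This is precisely the two-step computation appearing in the proof of Theorem \ref{t4}, now carried out for the concrete projection maps rather than for a general commuting square.

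The one point that is not a pure diagram chase, and which I expect to be the main obstacle, is justifying that $\psi \times \psi$ is genuinely a right proximal homotopy inverse of $\phi \times \phi$ \emph{in the proximity category}: the candidate contracting homotopy $(x',y',t) \mapsto (H(x',t),H(y',t))$, built from a proximal homotopy $H$ witnessing $\phi\circ\psi \simeq 1_{X'}$, must be shown to be proximally continuous on $(X' \times X') \times I$ with respect to the product proximity. This reduces to checking that near subsets of $(X' \times X') \times I$ are carried to near subsets, which follows from the definition of the product proximity together with the proximal continuity of $H$ (and, if one prefers a gluing-style verification, from the Gluing Lemma for Proximity). Establishing this product-homotopy fact is the only place where the proximity structure must be handled explicitly; once it is in hand, the remainder is a mechanical application of Corollary \ref{c2} and the two parts of Proposition \ref{p3}.
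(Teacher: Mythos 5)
Your argument is correct and is essentially the paper's own route: the paper derives this corollary from Corollary \ref{c2} together with Theorem \ref{t4}, and you have simply unfolded Theorem \ref{t4} into its two constituent Proposition \ref{p3} steps applied to the concrete projection maps. If anything your version is slightly more careful, since a literal application of Theorem \ref{t4} would require the square formed by $\psi \times \psi$ and $\phi$ to commute strictly (it only commutes up to proximal homotopy), whereas your use of the strict identity $p_{i}' \circ (\phi \times \phi) = \phi \circ p_{i}$ avoids that issue.
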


\begin{definition}
	Let $f, g : (X,\delta_{1}) \rightarrow (Y,\delta_{2})$ be two pc-maps with $A \subset X$. Then the relative proximal homotopic distance (shortly called relative proximal distance) between $f$ and $g$ on $A$, denoted by D$^{X}_{\delta_{1},\delta_{2}}(A;f,g)$, is D$_{\delta_{1},\delta_{2}}(f|_{A},g|_{A})$.
\end{definition}

\quad We observe that the relative proximal distance coincides with the proximal distance if $A = X$. For the inclusion $j : (A,\delta_{1}) \rightarrow (X,\delta_{1})$, the relative proximal distance is expressed as D$_{\delta_{1},\delta_{2}}(f \circ j, g \circ j)$. We now rewrite Definition \ref{def1} by using the relative proximal distance as follows.

\begin{proposition}
	For the inclusion map $j : (A,\delta_{1}) \rightarrow (X \times X,\delta^{'})$ and the projection map $p_{i} : (X \times X,\delta^{'}) \rightarrow (X,\delta_{1})$ with $i \in \{1,2\}$, the relative proximal complexity is given by
	\begin{eqnarray*}
		\text{TC}_{X}(A,\delta) = \text{D}_{\delta_{1},\delta_{1}}(p_{1} \circ j,p_{2} \circ j).
	\end{eqnarray*}
\end{proposition}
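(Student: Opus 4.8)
The plan is to recognize the right-hand side as a direct instance of Theorem \ref{t2} and then to identify the pullback fibration it produces with the fibration $\pi'$ used to define the relative proximal complexity in Definition \ref{def1}; this simply extends the argument of Corollary \ref{c2}, which is the special case $A = X \times X$.

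First I would unwind the right-hand side. By definition the relative proximal distance between $p_1$ and $p_2$ on $A$ is $\text{D}_{\delta_1,\delta_1}(p_1 \circ j, p_2 \circ j)$, where the maps $p_1 \circ j, p_2 \circ j : (A,\delta_1) \to (X,\delta_1)$ are the restrictions of the two projections. The key observation is that the combined map $(p_1 \circ j, p_2 \circ j) : A \to X \times X$ is nothing but the inclusion $j$: for $a = (a_1,a_2) \in A \subset X \times X$ one has $(p_1 \circ j, p_2 \circ j)(a) = (a_1,a_2) = a$. This is exactly what makes Theorem \ref{t2} applicable.

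Next I would apply Theorem \ref{t2} with $f = p_1 \circ j$ and $g = p_2 \circ j$. Forming the pullback of the proximal path fibration $\pi : PX \to X \times X$ along $(f,g) = j$ produces a proximal fibration $\pi_1 : (E,\delta_3) \to (A,\delta_1)$ with $E = \{(a,\alpha) \in A \times PX : \alpha(0) = p_1(a),\ \alpha(1) = p_2(a)\}$ and $\pi_1(a,\alpha) = a$, and the theorem yields $\text{D}_{\delta_1,\delta_1}(p_1 \circ j, p_2 \circ j) = \text{genus}_{\delta_3,\delta_1}(\pi_1)$.

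The final step, which I expect to be the only genuine obstacle, is to check that this pullback fibration $\pi_1$ agrees with $\pi' : PX^A \to A$ from Definition \ref{def1} up to a proximity isomorphism over $A$, so that their proximal Schwarz genera coincide. Since every $(a,\alpha) \in E$ forces $a = (\alpha(0),\alpha(1))$, the first coordinate is redundant, and the assignment $(a,\alpha) \leftrightarrow \alpha$ is a bijection between $E$ and $PX^A = \{\alpha \in PX : (\alpha(0),\alpha(1)) \in A\}$ that commutes with the projections to $A$, because $\pi_1(a,\alpha) = a = (\alpha(0),\alpha(1)) = \pi'(\alpha)$. The delicate point is verifying that this bijection transports the subspace proximity that $E$ inherits from $X \times PX$ to the subspace proximity that $PX^A$ inherits from $PX$, so that it is a proximity isomorphism; granting this, a cover of $A$ admits proximal sections of $\pi_1$ precisely when it admits proximal sections of $\pi'$, whence $\text{genus}_{\delta_3,\delta_1}(\pi_1) = \text{genus}(\pi') = \text{TC}_X(A,\delta)$ by Definition \ref{def1}. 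Combining this with the previous display gives the claimed equality $\text{TC}_X(A,\delta) = \text{D}_{\delta_1,\delta_1}(p_1 \circ j, p_2 \circ j)$.
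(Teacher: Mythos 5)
Your proof is correct and follows the route the paper intends: the paper in fact prints no proof of this proposition, presenting it as an immediate rewriting of Definition \ref{def1} via Theorem \ref{t2}, in exact parallel with Corollary \ref{c2} where $(f,g)$ is the identity of $X \times X$. Your observation that $(p_{1}\circ j, p_{2}\circ j)$ coincides with the inclusion $j$, and your identification of the resulting pullback of the proximal path fibration with $\pi' : PX^{A} \rightarrow A$ (including the proximity-isomorphism check over $A$), supply precisely the details the paper leaves implicit.
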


\quad It is a reasonable idea to extend the problem of finding the proximal TC number of a space to the problem of finding the TC number of fibrations that admits that space as a domain. This positively affects the course of the motion planning algorithm problem in proximity spaces. In other words, the computation of proximal TC$(f)$ provides the advantage of looking at the computation of proximal TC$(X)$ from a broad perspective.

\begin{definition}
	Given a proximal fibration $f : (X,\delta_{1}) \rightarrow (Y,\delta_{2})$, the projection maps $\pi_{1} : (X \times Y,\delta) \rightarrow (X,\delta_{1})$ and $\pi_{2} : (X \times Y,\delta) \rightarrow (Y,\delta_{2})$, the proximal topological complexity of $f$ (or simply proximal complexity of $f$) is defined as TC$_{\delta_{1},\delta_{2}}(f) =$ D$_{\delta,\delta_{2}}(f \circ \pi_{1},\pi_{2})$.
\end{definition}

\begin{lemma}\label{l2}
	Let $f : (X,\delta_{1}) \rightarrow (Y,\delta_{2})$ and $g : (Y,\delta_{2}) \rightarrow (Z,\delta_{3})$ be two proximal fibrations.
	
	\textbf{i)} If there exists a pc-map $f^{'} : (Y,\delta_{2}) \rightarrow (X,\delta_{1})$ such that $f \circ f^{'} \simeq_{\delta_{2},\delta_{2}} 1_{Y}$, then TC$_{\delta_{2},\delta_{3}}(g) \leq$ TC$_{\delta_{1},\delta_{3}}(g \circ f)$.
	
	\textbf{ii)} If there exists a pc-map $f^{'} : (Y,\delta_{2}) \rightarrow (X,\delta_{1})$ such that $f^{'} \circ f \simeq_{\delta_{1},\delta_{1}} 1_{X}$ and $f \circ f^{'} = 1_{Y}$, then TC$_{\delta_{1},\delta_{3}}(g \circ f) \leq$ TC$_{\delta_{2},\delta_{3}}(g)$.
\end{lemma}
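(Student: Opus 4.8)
The plan is to unwind both sides through the defining equation $\text{TC}_{\delta_1,\delta_2}(f)=\text{D}_{\delta,\delta_2}(f\circ\pi_1,\pi_2)$ and then to move a defining cover from one product space to the other along a product map of the shape $f\times 1_Z$ (respectively $f'\times 1_Z$), using only the monotonicity of proximal distance under precomposition (Proposition \ref{p1} ii)) together with its homotopy invariance (Proposition \ref{p2}). Write $p_1,p_2$ for the projections of $Y\times Z$ and $q_1,q_2$ for those of $X\times Z$, so that $\text{TC}_{\delta_2,\delta_3}(g)=\text{D}_{\delta',\delta_3}(g\circ p_1,p_2)$ and $\text{TC}_{\delta_1,\delta_3}(g\circ f)=\text{D}_{\delta'',\delta_3}(g\circ f\circ q_1,q_2)$.

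For part i) I would introduce the product map $\Psi=f'\times 1_Z:(Y\times Z,\delta')\to(X\times Z,\delta'')$, $\Psi(y,z)=(f'(y),z)$, first checking that it is a pc-map; this is immediate from $f'$ and $1_Z$ being pc-maps and the definition of the product proximity. One computes $q_2\circ\Psi=p_2$ and $(g\circ f\circ q_1)\circ\Psi=g\circ f\circ f'\circ p_1$ as honest equalities of maps. Postcomposing the homotopy $f\circ f'\simeq_{\delta_2,\delta_2}1_Y$ with $g$ and precomposing with $p_1$ (Proposition \ref{p1}) gives $g\circ f\circ f'\circ p_1\simeq_{\delta',\delta_3}g\circ p_1$, hence $(g\circ f\circ q_1)\circ\Psi\simeq_{\delta',\delta_3}g\circ p_1$. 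Proposition \ref{p2} then yields $\text{D}_{\delta',\delta_3}((g\circ f\circ q_1)\circ\Psi,q_2\circ\Psi)=\text{D}_{\delta',\delta_3}(g\circ p_1,p_2)=\text{TC}_{\delta_2,\delta_3}(g)$, while Proposition \ref{p1} ii) applied with the precomposition $\Psi$ gives $\text{D}_{\delta',\delta_3}((g\circ f\circ q_1)\circ\Psi,q_2\circ\Psi)\le\text{D}_{\delta'',\delta_3}(g\circ f\circ q_1,q_2)=\text{TC}_{\delta_1,\delta_3}(g\circ f)$. Chaining the equality and the inequality gives $\text{TC}_{\delta_2,\delta_3}(g)\le\text{TC}_{\delta_1,\delta_3}(g\circ f)$.

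For part ii) I would use the product map in the other direction, $\Phi=f\times 1_Z:(X\times Z,\delta'')\to(Y\times Z,\delta')$, $\Phi(x,z)=(f(x),z)$, again a pc-map. Here the bookkeeping is cleaner, since $(g\circ p_1)\circ\Phi=g\circ f\circ q_1$ and $p_2\circ\Phi=q_2$ hold as strict equalities, so no homotopy correction is needed. Proposition \ref{p1} ii) applied to $\Phi$ then gives at once $\text{TC}_{\delta_1,\delta_3}(g\circ f)=\text{D}_{\delta'',\delta_3}((g\circ p_1)\circ\Phi,p_2\circ\Phi)\le\text{D}_{\delta',\delta_3}(g\circ p_1,p_2)=\text{TC}_{\delta_2,\delta_3}(g)$. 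The role of the hypotheses $f'\circ f\simeq_{\delta_1,\delta_1}1_X$ and $f\circ f'=1_Y$ is to make $f$ a proximal homotopy equivalence with inverse $f'$, so that i) and ii) combine into the equality $\text{TC}_{\delta_2,\delta_3}(g)=\text{TC}_{\delta_1,\delta_3}(g\circ f)$; for the inequality in ii) alone only proximal continuity of $f$ is used.

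The step I expect to be the main obstacle is the homotopy identification in part i), namely $(g\circ f\circ q_1)\circ\Psi\simeq_{\delta',\delta_3}g\circ p_1$: one must transport the proximal homotopy witnessing $f\circ f'\simeq_{\delta_2,\delta_2}1_Y$ through $g$ and through the $Z$-factor of the product, verifying at each stage that the resulting homotopy is proximally continuous for the correct product proximities $\delta',\delta'',\delta_3$. This is precisely where the hypothesis of part i) is consumed, in contrast with part ii). The remaining verifications, that $f\times 1_Z$ and $f'\times 1_Z$ are pc-maps and that the composite identities above hold, are routine once the product proximities are spelled out, but I would record them explicitly to keep the argument self-contained.
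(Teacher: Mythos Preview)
Your argument is correct and follows the same strategy as the paper: introduce the product maps $f'\times 1_Z$ and $f\times 1_Z$, identify the relevant projections up to proximal homotopy, and then apply Proposition~\ref{p1} ii) together with Proposition~\ref{p2}. Part i) is essentially identical to the paper's proof.

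For part ii) your route is actually cleaner than the paper's. The paper first replaces $\pi_1'$ by $f'\circ\pi_1\circ(f\times 1_Z)$ via the homotopy $f'\circ f\simeq 1_X$, and only then cancels $f\circ f'=1_Y$ to arrive at $g\circ\pi_1\circ(f\times 1_Z)$; both hypotheses on $f'$ are consumed in this detour. You observe instead that $(g\circ p_1)\circ(f\times 1_Z)=g\circ f\circ q_1$ and $p_2\circ(f\times 1_Z)=q_2$ hold on the nose, so Proposition~\ref{p1} ii) gives the inequality immediately, using only that $f$ is a pc-map. Your remark that the hypotheses in ii) serve only to combine with i) into an equality is a sharper reading of the lemma than the paper itself provides.
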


\begin{proof}
	Consider the projection maps $\pi_{1}$, $\pi_{2}$, $\pi_{1}^{'}$, and $\pi_{2}^{'}$ from $(Y \times Z,\delta)$ to $(Y,\delta_{2})$, from $(Y \times Z,\delta)$ to $(Z,\delta_{3})$, from $(X \times Z,\delta^{'})$ to $(X,\delta_{1})$, and from $(X \times Z,\delta^{'})$ to $(Z,\delta_{3})$, respectively. 
	
	\textbf{i)} $f \circ f^{'} \simeq_{\delta_{2},\delta_{2}} 1_{Y}$ implies that $\pi_{1} \simeq_{\delta,\delta_{2}} f \circ \pi_{1}^{'} \circ (f^{'} \times 1_{Z})$ and $\pi_{2} \simeq_{\delta,\delta_{3}} \pi_{2}^{'} \circ (f^{'} \times 1_{Z})$. Since TC$_{\delta_{1},\delta_{3}}(g \circ f) =$ D$_{\delta^{'},\delta_{3}}(g \circ f \circ \pi_{1}^{'},\pi_{2}^{'})$ and TC$_{\delta_{2},\delta_{3}}(g) =$ D$_{\delta,\delta_{3}}(g \circ \pi_{1},\pi_{2})$, by Proposition \ref{p1}, we have that
	\begin{eqnarray*}
		\text{D}_{\delta,\delta_{3}}(g \circ \pi_{1},\pi_{2}) &=& \text{D}_{\delta,\delta_{3}}(g \circ f \circ \pi_{1}^{'} \circ (f^{'} \times 1_{Z}), \pi_{2}^{'} \circ (f^{'} \times 1_{Z}))\\
		&\leq& \text{D}_{\delta^{'},\delta_{3}}(g \circ f \circ \pi_{1}^{'},\pi_{2}^{'}).
	\end{eqnarray*}
    This proves that TC$_{\delta_{2},\delta_{3}}(g) \leq$ TC$_{\delta_{1},\delta_{3}}(g \circ f)$.
    
    \textbf{ii)} $f^{'} \circ f \simeq_{\delta_{1},\delta_{1}} 1_{X}$ implies that $\pi_{1}^{'}$ and $\pi_{2}^{'}$ homotopically as $f^{'} \circ \pi_{1} \circ (f \times 1_{Z})$ and $\pi_{2} \circ (f \times 1_{Z})$, respectively. Since TC$_{\delta_{1},\delta_{3}}(g \circ f) =$ D$_{\delta^{'},\delta_{3}}(g \circ f \circ \pi_{1}^{'},\pi_{2}^{'})$ and TC$_{\delta_{2},\delta_{3}}(g) =$ D$_{\delta,\delta_{3}}(g \circ \pi_{1},\pi_{2})$, by Proposition \ref{p2} and Proposition \ref{p1}, we have that
    \begin{eqnarray*}
    	\text{D}_{\delta^{'},\delta_{3}}(g \circ f \circ \pi_{1}^{'},\pi_{2}^{'}) &=&  \text{D}_{\delta^{'},\delta_{3}}(g \circ f \circ f^{'} \circ \pi_{1} \circ (f \times 1_{Z}),\pi_{2} \circ (f \times 1_{Z})) \\
    	&=& \text{D}_{\delta^{'},\delta_{3}}(g \circ \pi_{1} \circ (f \times 1_{Z}),\pi_{2} \circ (f \times 1_{Z})) \\
    	&\leq&\text{D}_{\delta,\delta_{3}}(g \circ \pi_{1},\pi_{2}).
    \end{eqnarray*}
    Consequently, we conclude that TC$_{\delta_{1},\delta_{2}}(g \circ f) \leq$ TC$_{\delta_{2},\delta_{3}}(g)$.
\end{proof}

\quad In proximity spaces, the fiber homotopy equivalence is defined as follows:

\begin{definition}
	Let $f : (X,\delta_{1}) \rightarrow (Y,\delta_{2})$ and $f^{'} : (X^{'},\delta_{1}^{'}) \rightarrow (Y,\delta_{2})$ be two proximal fibrations. If the following diagram is commutative with the property that $h : (X,\delta_{1}) \rightarrow (X^{'},\delta_{1}^{'})$ and $k : (X^{'},\delta_{1}^{'}) \rightarrow (X,\delta_{1})$ are proximal homotopy inverses for each other, namely that $h \circ k \simeq_{\delta_{1}^{'},\delta_{1}^{'}} 1_{X^{'}}$ and $k \circ h \simeq_{\delta_{1},\delta_{1}} 1_{X}$, then $f$ and $f^{'}$ are called proximal fiber homotopy equivalent fibrations.
	$$\xymatrix{
		X \ar[dr]_{f} \ar@<1ex>[rr]^h
		& & X^{'} \ar@<1ex>[ll]^k \ar[dl]^{f^{'}} \\
		& Y & }$$
\end{definition}

\begin{theorem}
	The proximal complexity of proximal fibrations is a proximal fiber homotopy equivalence invariant.
\end{theorem}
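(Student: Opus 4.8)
The plan is to deduce the statement directly from Theorem \ref{t4}, the proximal-homotopy-equivalence invariance of the homotopic distance, by recasting both proximal complexities as homotopic distances over product spaces. Write $\pi_{1},\pi_{2}$ for the projections of $(X \times Y,\delta)$ onto $(X,\delta_{1})$ and $(Y,\delta_{2})$, and $\pi_{1}^{'},\pi_{2}^{'}$ for the projections of $(X^{'} \times Y,\delta^{'})$ onto $(X^{'},\delta_{1}^{'})$ and $(Y,\delta_{2})$. By the definition of the proximal complexity of a fibration, TC$_{\delta_{1},\delta_{2}}(f) =$ D$_{\delta,\delta_{2}}(f \circ \pi_{1},\pi_{2})$ and TC$_{\delta_{1}^{'},\delta_{2}}(f^{'}) =$ D$_{\delta^{'},\delta_{2}}(f^{'} \circ \pi_{1}^{'},\pi_{2}^{'})$, so it suffices to show that these two distances coincide.

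First I would set up the comparison maps required by Theorem \ref{t4}. Take $\psi = 1_{Y} : (Y,\delta_{2}) \rightarrow (Y,\delta_{2})$ on the common codomain, and $\phi = k \times 1_{Y} : (X^{'} \times Y,\delta^{'}) \rightarrow (X \times Y,\delta)$ on the domains, where $k : (X^{'},\delta_{1}^{'}) \rightarrow (X,\delta_{1})$ is the pc-map from the fiber homotopy equivalence; $\phi$ is a pc-map as a product of pc-maps. Using the strict commutativity $f \circ k = f^{'}$ read off from the defining triangle, one checks the two relations demanded by Theorem \ref{t4}: for $(x^{'},y) \in X^{'} \times Y$,
\[
(f \circ \pi_{1}) \circ \phi \, (x^{'},y) = f(k(x^{'})) = f^{'}(x^{'}) = (f^{'} \circ \pi_{1}^{'})(x^{'},y),
\]
and $\pi_{2} \circ \phi(x^{'},y) = y = \pi_{2}^{'}(x^{'},y)$. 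Hence $\psi \circ (f \circ \pi_{1}) \circ \phi = f^{'} \circ \pi_{1}^{'}$ and $\psi \circ \pi_{2} \circ \phi = \pi_{2}^{'}$, so the commuting square of Theorem \ref{t4} is realized with the unprimed pair $(f \circ \pi_{1},\pi_{2})$ and the primed pair $(f^{'} \circ \pi_{1}^{'},\pi_{2}^{'})$.

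It remains to verify that $\phi$ is a proximal homotopy equivalence, and this is the step I expect to be the real work. The natural candidate for its inverse is $h \times 1_{Y}$, where $h : (X,\delta_{1}) \rightarrow (X^{'},\delta_{1}^{'})$ is the proximal homotopy inverse of $k$; then $(k \times 1_{Y}) \circ (h \times 1_{Y}) = (k \circ h) \times 1_{Y}$ and $(h \times 1_{Y}) \circ (k \times 1_{Y}) = (h \circ k) \times 1_{Y}$. To finish I would establish the auxiliary fact that taking a product with an identity preserves proximal homotopy, namely that $k \circ h \simeq_{\delta_{1},\delta_{1}} 1_{X}$ forces $(k \circ h) \times 1_{Y} \simeq_{\delta,\delta} 1_{X \times Y}$, and symmetrically on the primed side. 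This follows by promoting a proximal homotopy $G : X \times I \rightarrow X$ to $G \times 1_{Y} : (X \times Y) \times I \rightarrow X \times Y$, $((x,y),t) \mapsto (G(x,t),y)$, and checking its proximal continuity with respect to the product proximities; the genuine obstacle is precisely this compatibility check, since the proximity on $(X \times Y) \times I$ must be reconciled with that on $(X \times I) \times Y$. Granting it, $\phi$ is a proximal homotopy equivalence and $\psi = 1_{Y}$ is trivially one, so Theorem \ref{t4} yields D$_{\delta,\delta_{2}}(f \circ \pi_{1},\pi_{2}) =$ D$_{\delta^{'},\delta_{2}}(f^{'} \circ \pi_{1}^{'},\pi_{2}^{'})$, that is, TC$_{\delta_{1},\delta_{2}}(f) =$ TC$_{\delta_{1}^{'},\delta_{2}}(f^{'})$, as desired.
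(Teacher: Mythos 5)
Your argument is correct in outline, but it reaches the conclusion by a different route than the paper. The paper does not invoke Theorem \ref{t4} at all: it applies Lemma \ref{l2}(i) twice, once with the strict identity $f' = f \circ k$ and once with $f = f' \circ h$, to get the sandwich $\mathrm{TC}_{\delta_{1},\delta_{2}}(f) \leq \mathrm{TC}_{\delta_{1}^{'},\delta_{2}}(f \circ k) = \mathrm{TC}_{\delta_{1}^{'},\delta_{2}}(f^{'}) \leq \mathrm{TC}_{\delta_{1},\delta_{2}}(f^{'} \circ h) = \mathrm{TC}_{\delta_{1},\delta_{2}}(f)$; each application needs only a one-sided homotopy inverse. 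You instead build a single commuting square with $\phi = k \times 1_{Y}$ and $\psi = 1_{Y}$ and apply the two-sided invariance of the homotopic distance once, which is arguably more direct but requires you to certify that $k \times 1_{Y}$ is a full proximal homotopy equivalence. The auxiliary fact you flag as the genuine obstacle --- that a proximal homotopy $G$ witnessing $k \circ h \simeq_{\delta_{1},\delta_{1}} 1_{X}$ promotes to $G \times 1_{Y}$ witnessing $(k \circ h) \times 1_{Y} \simeq_{\delta,\delta} 1_{X \times Y}$ --- is exactly the step the paper's own proof of Lemma \ref{l2}(i) uses silently when it asserts $\pi_{1} \simeq_{\delta,\delta_{2}} f \circ \pi_{1}^{'} \circ (f^{'} \times 1_{Z})$, so on this point your write-up is no less rigorous than the paper's; it is in fact more honest about where the product-proximity compatibility check lives. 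Net effect: both proofs rest on the same homotopic-distance machinery (Propositions \ref{p1} and \ref{p2}), the paper trading one invocation of a stronger theorem for two invocations of a weaker lemma.
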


\begin{proof}
	Let $f : (X,\delta_{1}) \rightarrow (Y,\delta_{2})$ and $f^{'} : (X^{'},\delta_{1}^{'}) \rightarrow (Y,\delta_{2})$ be proximal fiber homotopy equivalent fibrations. Then we have that $f = f^{'} \circ h$ and $f^{'} = f \circ k$ for proximal homotopy inverses $h$ and $k$. By Lemma \ref{l2} i), we find that
	\begin{eqnarray*}
		\text{TC}_{\delta_{1},\delta_{2}}(f) \leq \text{TC}_{\delta_{1}^{'},\delta_{2}}(f \circ k) = \text{TC}_{\delta_{1}^{'},\delta_{2}}(f^{'}) \leq \text{TC}_{\delta_{1},\delta_{2}}(f^{'} \circ h) = \text{TC}_{\delta_{1},\delta_{2}}(f).
	\end{eqnarray*}
    Finally, we get TC$_{\delta_{1},\delta_{2}}(f) =$  TC$_{\delta_{1}^{'},\delta_{2}}(f^{'})$.
\end{proof}

\section{Higher Versions}
\label{sec:3}

\quad Studying generalized versions of concepts such as proximal TC and proximal D$(f,g)$ provides a different perspective on the proximal navigation problem. 

\begin{definition}
	Let $f_{1}, f_{2}, \cdots, f_{n}$ be pc-maps from $(X,\delta_{1})$ to $(Y,\delta_{2})$. The proximal $n$th (or higher) homotopic distance between $f_{1}, f_{2}, \cdots, f_{n}$, denoted by D$_{\delta_{1},\delta_{2}}(f_{1},f_{2},\cdots,f_{n})$, is the minimum integer $m>0$ if the following hold:
	\begin{itemize}
		\item $X$ has at least one cover $\{V_{1}, \cdots, V_{m}\}$.
		\item For all $j = 1, \cdots, m$, $f_{1}|_{V_{j}} \simeq_{\delta_{1},\delta_{2}} f_{2}|_{V_{j}} \simeq_{\delta_{1},\delta_{2}} \cdots \simeq_{\delta_{1},\delta_{2}} f_{n}|_{V_{j}}$.
	\end{itemize}
	If such a covering does not exist, then D$_{\delta_{1},\delta_{2}}(f_{1},f_{2},\cdots,f_{n})$ is $\infty$.
\end{definition} 

\begin{theorem}\label{t6}
	Let $f_{1},f_{2},\cdots,f_{n} : (X,\delta_{1}) \rightarrow (Y,\delta_{2})$ be any pc-maps. Then the following hold:
	
	\textbf{i)} Assume that $\sigma$ is any permutation of $\{1,2,\cdots,n\}$. Then
	\begin{eqnarray*}
		\text{D}_{\delta_{1},\delta_{2}}(f_{1},f_{2},\cdots,f_{n}) = \text{D}_{\delta_{1},\delta_{2}}(f_{\sigma(1)},f_{\sigma(2)},\cdots,f_{\sigma(n)}).
	\end{eqnarray*} 
	
	\textbf{ii)} D$_{\delta_{1},\delta_{2}}(f_{1},f_{2},\cdots,f_{n}) = 1$ if and only if $f_{1},f_{2},\cdots,f_{n}$ are proximally homotopic distance.
	
	\textbf{iii)} If $X$ is finite and proximally connected, then D$_{\delta_{1},\delta_{2}}(f_{1},f_{2},\cdots,f_{n})$ is finite.
	
	\textbf{iv)} Assume that $1 < r < n$. Then
	\begin{eqnarray*}
		\text{D}_{\delta_{1},\delta_{2}}(f_{1},f_{2},\cdots,f_{r}) \leq  \text{D}_{\delta_{1},\delta_{2}}(f_{1},f_{2},\cdots,f_{n}).
	\end{eqnarray*}
\end{theorem}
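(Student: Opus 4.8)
The plan is to treat all four parts as mild generalizations of Proposition \ref{p4}, since the higher distance is defined by the identical covering scheme but now demands that \emph{all} $n$ restrictions lie in a single proximal homotopy class on each piece of the cover. The one structural fact I will use repeatedly is that $\simeq_{\delta_1,\delta_2}$ is an equivalence relation on pc-maps, so the displayed chain $f_1|_{V_j} \simeq \cdots \simeq f_n|_{V_j}$ is exactly the statement that $f_1|_{V_j},\dots,f_n|_{V_j}$ all represent one and the same class.

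Parts i), ii), and iv) are then immediate. For i), the admissibility condition on a cover $\{V_1,\dots,V_m\}$ — that the restrictions over each $V_j$ lie in a common proximal homotopy class — does not depend on the order in which the maps are listed, so a cover is admissible for $(f_1,\dots,f_n)$ precisely when it is admissible for $(f_{\sigma(1)},\dots,f_{\sigma(n)})$, and the two minima agree. For ii), $\text{D}_{\delta_1,\delta_2}(f_1,\dots,f_n)=1$ means the one-element cover $\{X\}$ is admissible, which reads exactly as $f_1\simeq_{\delta_1,\delta_2}\cdots\simeq_{\delta_1,\delta_2}f_n$ on all of $X$; conversely mutual proximal homotopy makes $\{X\}$ admissible and forces the minimum down to $1$. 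For iv), I would start from an admissible cover realizing $\text{D}_{\delta_1,\delta_2}(f_1,\dots,f_n)=m$ and simply discard the maps $f_{r+1},\dots,f_n$: on each $V_j$ the full chain contains the subchain $f_1|_{V_j}\simeq\cdots\simeq f_r|_{V_j}$, so the same cover is admissible for $(f_1,\dots,f_r)$ and gives $\text{D}_{\delta_1,\delta_2}(f_1,\dots,f_r)\le m$.

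The substantive part is iii), and this is where I expect the main obstacle. Here I would use the finite singleton cover $\{\{x\}:x\in X\}$ and reduce the claim, via transitivity of $\simeq_{\delta_1,\delta_2}$, to connecting consecutive maps: it suffices to build, for each $x$ and each $k$, a proximal homotopy $f_k|_{\{x\}}\simeq_{\delta_1,\delta_2}f_{k+1}|_{\{x\}}$. Each such homotopy is produced exactly as in Proposition \ref{p4} iii), using proximal connectedness of $X$ (equivalently its path-connectedness) to obtain a proximal path in $X$ whose image under the relevant map joins $f_k(x)$ to $f_{k+1}(x)$. Chaining the resulting $n-1$ homotopies by transitivity yields $f_1|_{\{x\}}\simeq\cdots\simeq f_n|_{\{x\}}$ on every singleton, so the singleton cover is admissible and $\text{D}_{\delta_1,\delta_2}(f_1,\dots,f_n)\le|X|<\infty$. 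The delicate point is exactly the one already hidden in the $n=2$ case: manufacturing a homotopy on a one-point domain requires extracting the needed target-side path from connectivity of $X$, and I would reuse that construction verbatim for each consecutive pair before letting transitivity carry the conclusion across the whole chain at no additional cost.
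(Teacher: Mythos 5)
Your proposal is correct and follows essentially the same route as the paper, which simply refers parts i)--iii) back to the method of Proposition \ref{p4} and notes that iv) is immediate from the definition. You merely make explicit what the paper leaves implicit --- in particular the transitivity chaining of the $n-1$ consecutive homotopies over the singleton cover in part iii) --- while reusing the same path construction from Proposition \ref{p4} iii).
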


\begin{proof}
	See the method in the proof of Proposition \ref{p4} except for the last part. The last condition \textbf{iv)} is clearly obtained from the definition of the proximal higher homotopic distance.
\end{proof}
	
\begin{theorem}\label{t7}
		\textbf{i)} Consider any pc-maps $f_{1},f_{2},\cdots,f_{n}$ and $g_{1},g_{2},\cdots,g_{n}$ from $(X,\delta_{1})$ to $(Y,\delta_{2})$. Then  \[\text{D}_{\delta_{1},\delta_{2}}(f_{1},f_{2},\cdots,f_{n}) =  \text{D}_{\delta_{1},\delta_{2}}(g_{1},g_{2},\cdots,g_{n})\] if $f_{i} \simeq_{\delta_{1},\delta_{2}} g_{i}$ for each $i \in \{1,\cdots,n\}$.
		
		\textbf{ii)} Let $f_{1}$,$f_{2},\cdots,f_{n} : (X,\delta_{1}) \rightarrow (Y,\delta_{2})$ and $g_{1}$,$g_{2},\cdots,g_{n} : (X,\delta_{1}) \rightarrow (Y,\delta_{2}^{'})$ be pc-maps such that $f_{i} \simeq_{\delta_{1},\delta_{2}} f_{i+1}$ and $g_{i} \simeq_{\delta_{1},\delta_{2}^{'}} g_{i+1}$ for each $i \in \{1,\cdots,n\}$. If $\delta_{2}^{'} > \delta_{2}$, then \[\text{D}_{\delta_{1},\delta_{2}}(f_{1},f_{2},\cdots,f_{n}) \leq \text{D}_{\delta_{1},\delta_{2}^{'}}(g_{1},g_{2},\cdots,g_{n}).\]
		
		\textbf{iii)} Let $f_{1}$,$f_{2},\cdots,f_{n} : (X,\delta_{1}) \rightarrow (Y,\delta^{'})$ and $g_{1}$,$g_{2},\cdots,g_{n} : (X,\delta_{1}^{'}) \rightarrow (Y,\delta^{'})$ be pc-maps such that $f_{i} \simeq_{\delta_{1},\delta^{'}} f_{i+1}$ and $g_{i} \simeq_{\delta_{1}^{'},\delta^{'}} g_{i+1}$ for each $i \in \{1,\cdots,n\}$. If $\delta_{1}^{'} > \delta_{1}$, then \[\text{D}_{\delta_{1}^{'},\delta^{'}}(f_{1},f_{2},\cdots,f_{n}) \leq \text{D}_{\delta_{1},\delta^{'}}(g_{1},g_{2},\cdots,g_{n}).\]
\end{theorem}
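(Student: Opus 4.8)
The plan is to treat parts (i), (ii), and (iii) as the $n$-ary analogues of Proposition \ref{p2}, Theorem \ref{t1}, and Theorem \ref{t3} respectively, and to transfer those proof mechanisms verbatim. In every case the skeleton is the same: take a cover of $X$ that realizes the distance appearing on the right-hand side, and on each member of that cover reconstruct the full homotopy chain demanded by the left-hand side. The two structural facts I would lean on are that proximal homotopy is an equivalence relation (so in particular transitive, Theorem 11 of \cite{PetersTane2:2021}), which lets me splice consecutive links, and Lemma \ref{l1}, which lets me transport a link between a finer and a coarser proximity. Comparing the cardinalities of the two covers then yields the stated (in)equality.

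For (i) I would first establish $\text{D}_{\delta_1,\delta_2}(g_1,\dots,g_n) \le \text{D}_{\delta_1,\delta_2}(f_1,\dots,f_n)$. Writing the right-hand side as $m$, I fix a cover $\{V_1,\dots,V_m\}$ on each piece of which $f_1|_{V_j} \simeq_{\delta_1,\delta_2} \cdots \simeq_{\delta_1,\delta_2} f_n|_{V_j}$. For each consecutive pair I form the three-term chain $g_i|_{V_j} \simeq f_i|_{V_j} \simeq f_{i+1}|_{V_j} \simeq g_{i+1}|_{V_j}$, whose outer links come from restricting the hypotheses $f_i \simeq_{\delta_1,\delta_2} g_i$ and whose middle link comes from the cover; transitivity collapses it to $g_i|_{V_j} \simeq_{\delta_1,\delta_2} g_{i+1}|_{V_j}$. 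Letting $i$ range over the chain rebuilds the full $g$-chain on each $V_j$, so the same cover witnesses $\text{D}_{\delta_1,\delta_2}(g_1,\dots,g_n) \le m$; interchanging the roles of the $f_i$ and $g_i$ gives the reverse inequality and hence equality.

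For (ii) and (iii) the skeleton is identical, with one proximity conversion inserted. Starting from a cover that realizes the right-hand distance, I apply Lemma \ref{l1} to each link of the chain to move it into the proximity in which the left-hand distance is measured: Lemma \ref{l1}(i) lowers the target from $\delta_2^{'}$ to $\delta_2$ in (ii), legitimate since $\delta_2^{'} > \delta_2$, while Lemma \ref{l1}(ii) raises the source from $\delta_1$ to $\delta_1^{'}$ in (iii), legitimate since $\delta_1^{'} > \delta_1$. Splicing the transported links with the homotopies supplied by the hypotheses, again via transitivity, reconstructs on each cover member the chain required by the left-hand side, so the same cover is admissible and the asserted inequality on cover sizes follows exactly as in the binary cases.

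The main obstacle, and the only genuine difference from the binary proofs, is the bookkeeping along a chain of $n$ maps rather than a single pair: transitivity must be iterated, and before any two links are concatenated they must sit in one and the same proximity. Hence the Lemma \ref{l1} conversions must be applied uniformly to every link of the chain, not merely to a single distinguished link as in the $n=2$ setting, so that the whole chain lives in the common target (resp. source) proximity; once this uniformity is arranged, Theorem 11 of \cite{PetersTane2:2021} closes each part just as when $n=2$. I would also record in passing that restricting a proximal homotopy to a cover member is again a proximal homotopy for the induced subspace proximities, which is what permits the entire argument to be carried out piecewise over the cover.
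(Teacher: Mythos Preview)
Your proposal is correct and follows exactly the approach the paper takes: the paper's own proof simply refers parts (i), (ii), (iii) back to the methods of Proposition~\ref{p2}, Theorem~\ref{t1}, and Theorem~\ref{t3} respectively, which is precisely what you do, only with more detail about how transitivity and Lemma~\ref{l1} are iterated along the $n$-term chain.
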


\begin{proof}
	For the first part, see the method in the proof of Proposition \ref{p2}. Other parts follow the way that is similar to the proof of Theorem \ref{t1} and Theorem \ref{t3}, respectively.
\end{proof}
	
\begin{theorem}\label{t5}
	Let $f_{1},f_{2},\cdots,f_{n} : (X,\delta_{1}) \rightarrow (Y,\delta_{2})$ be pc-maps.
	
		\textbf{i)} If $h : (Y,\delta_{2}) \rightarrow (Z,\delta_{3})$ is a pc-map, then
		\begin{eqnarray*}
			\text{D}_{\delta_{1},\delta_{3}}(h \circ f_{1},h \circ f_{2},\cdots,h \circ f_{n}) \leq \text{D}_{\delta_{1},\delta_{2}}(f_{1},f_{2},\cdots f_{n}). 
		\end{eqnarray*}
		
		\textbf{ii)} If $k : (Z,\delta_{3}) \rightarrow (X,\delta_{1})$ is a pc-map, then
		\begin{eqnarray*}
			\text{D}_{\delta_{3},\delta_{2}}(f_{1} \circ k,f_{2} \circ k,\cdots,f_{n} \circ k) \leq \text{D}_{\delta_{1},\delta_{2}}(f_{1},f_{2},\cdots,f_{n}). 
		\end{eqnarray*}
		
		\textbf{iii)} If $h : (Y,\delta_{2}) \rightarrow (Z,\delta_{3})$ admits a left proximal homotopy inverse, then \[\text{D}_{\delta_{1},\delta_{3}}(h \circ f_{1},h \circ f_{2},\cdots,h \circ f_{n}) = \text{D}_{\delta_{1},\delta_{2}}(f_{1},f_{2},\cdots,f_{n}).\]
		
		\textbf{iv)} If $k : (Z,\delta_{3}) \rightarrow (X,\delta_{1})$ admits a right proximal homotopy inverse, then \[\text{D}_{\delta_{3},\delta_{2}}(f_{1} \circ k,f_{2} \circ k,\cdots,f_{n} \circ k) = \text{D}_{\delta_{1},\delta_{2}}(f_{1},f_{2},\cdots,f_{n}).\]
\end{theorem}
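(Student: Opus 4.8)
The plan is to read Theorem \ref{t5} as the $n$-fold generalization of Proposition \ref{p1} (for \textbf{i)} and \textbf{ii)}) and of Proposition \ref{p3} (for \textbf{iii)} and \textbf{iv)}), carrying over those arguments with Proposition \ref{p2} replaced by its higher-dimensional analogue Theorem \ref{t7} \textbf{i)}. The structural feature that makes this work is that the higher distance is witnessed by a \emph{single} common cover of $X$ on which all consecutive restrictions are simultaneously proximally homotopic; pre- or post-composing the entire chain by one fixed pc-map preserves this simultaneity on the same cover (or its pullback), so the distance can only drop.

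For \textbf{i)} I would set $\mathrm{D}_{\delta_{1},\delta_{2}}(f_{1},\dots,f_{n})=m$ and take a witnessing cover $\{V_{1},\dots,V_{m}\}$ of $X$ with $f_{1}|_{V_{j}}\simeq_{\delta_{1},\delta_{2}}\cdots\simeq_{\delta_{1},\delta_{2}}f_{n}|_{V_{j}}$ for each $j$. Since the composite of two pc-maps is a pc-map and composing a proximal homotopy $G$ with $h$ yields the proximal homotopy $h\circ G$, each link $f_{i}|_{V_{j}}\simeq f_{i+1}|_{V_{j}}$ pushes forward to $(h\circ f_{i})|_{V_{j}}\simeq_{\delta_{1},\delta_{3}}(h\circ f_{i+1})|_{V_{j}}$; chaining over $i$ gives the full chain $(h\circ f_{1})|_{V_{j}}\simeq_{\delta_{1},\delta_{3}}\cdots\simeq_{\delta_{1},\delta_{3}}(h\circ f_{n})|_{V_{j}}$ on the same cover, whence $\mathrm{D}_{\delta_{1},\delta_{3}}(h\circ f_{1},\dots,h\circ f_{n})\le m$. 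For \textbf{ii)} I would instead pass to the cover $\{k^{-1}(V_{1}),\dots,k^{-1}(V_{m})\}$ of $Z$ (which covers $Z$ because $k$ maps into $X=\bigcup_{j}V_{j}$) and precompose each chain homotopy with $k$, exactly as in Proposition \ref{p1} \textbf{ii)}, obtaining $(f_{1}\circ k)|_{k^{-1}(V_{j})}\simeq_{\delta_{3},\delta_{2}}\cdots\simeq_{\delta_{3},\delta_{2}}(f_{n}\circ k)|_{k^{-1}(V_{j})}$ and hence the claimed bound.

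For the equalities \textbf{iii)} and \textbf{iv)} I would run the two-sided sandwich of Proposition \ref{p3}. In \textbf{iii)}, pick a left proximal homotopy inverse $h'$ with $h'\circ h\simeq_{\delta_{2},\delta_{2}}1_{Y}$; then $h'\circ h\circ f_{i}\simeq_{\delta_{1},\delta_{2}}f_{i}$ for every $i$, so Theorem \ref{t7} \textbf{i)} gives $\mathrm{D}_{\delta_{1},\delta_{2}}(h'\circ h\circ f_{1},\dots,h'\circ h\circ f_{n})=\mathrm{D}_{\delta_{1},\delta_{2}}(f_{1},\dots,f_{n})$, and two applications of part \textbf{i)} (once with $h$, once with $h'$) produce
\begin{eqnarray*}
\mathrm{D}_{\delta_{1},\delta_{2}}(f_{1},\dots,f_{n}) &\ge& \mathrm{D}_{\delta_{1},\delta_{3}}(h\circ f_{1},\dots,h\circ f_{n}) \\
&\ge& \mathrm{D}_{\delta_{1},\delta_{2}}(h'\circ h\circ f_{1},\dots,h'\circ h\circ f_{n}) \\
&=& \mathrm{D}_{\delta_{1},\delta_{2}}(f_{1},\dots,f_{n}),
\end{eqnarray*}
forcing every inequality to be an equality. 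Part \textbf{iv)} is identical, using a right proximal homotopy inverse $k'$ with $k\circ k'\simeq_{\delta_{1},\delta_{1}}1_{X}$ and part \textbf{ii)} in place of part \textbf{i)}.

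The only place demanding genuine care — the main obstacle — is verifying that proximal homotopy remains transitive along the \emph{entire} chain after composition, i.e. that the pushed-forward or pulled-back links can be concatenated into one chain on a single cover. This rests on proximal homotopy being an equivalence relation together with the Gluing Lemma for Proximity, and it is precisely the same fact that licenses invoking Theorem \ref{t7} \textbf{i)} in the equality cases; once it is in hand, the remaining arguments are the routine $n$-fold transcriptions of Propositions \ref{p1} and \ref{p3}.
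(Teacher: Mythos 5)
Your proposal is correct and follows exactly the route the paper intends: the paper's own proof of Theorem \ref{t5} simply says to repeat the methods of Proposition \ref{p1} for parts \textbf{i)}--\textbf{ii)} and of Proposition \ref{p3} for parts \textbf{iii)}--\textbf{iv)}, with Theorem \ref{t7} \textbf{i)} standing in for Proposition \ref{p2}, which is precisely what you carry out. Your write-up is in fact more detailed than the paper's, since you explicitly address the chaining of the composed homotopies over a single cover, a point the paper leaves implicit.
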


\begin{proof}
	See the method in the proof of Proposition \ref{p1} for the first two parts. For the others, see the method in the proof of Proposition \ref{p3}.
\end{proof}

\begin{definition}\label{def3}
	\textbf{i)} Let $f_{1},f_{2},\cdots,f_{n} : (X,\delta_{1}) \rightarrow (Y,\delta_{2})$ be pc-maps with $A \subset X$. Then the relative proximal higher homotopic distance between $f_{1},f_{2},\cdots,f_{n}$ on $A$, denoted by D$^{X}_{\delta_{1},\delta_{2}}(A;f_{1},f_{2},\cdots,f_{n})$, is D$_{\delta_{1},\delta_{2}}(f_{1}|_{A},f_{2}|_{A},\cdots,f_{n}|_{A})$.
	
	\textbf{ii)} Let $n>1$, $f = (f_{1},f_{2},\cdots,f_{n}) : (X,\delta_{1}) \rightarrow (Y^{n},\delta^{'})$ be a proximal fibration, and $\pi_{i} : (X^{n},\delta) \rightarrow (X,\delta_{1})$ be the $i$th projection map for each $i \in \{1,\cdots,n\}$. Then the proximal higher topological complexity of $f$ is defined as \[\text{TC}_{n,\delta_{1},\delta^{'}}(f) = \text{D}_{\delta,\delta^{'}}(f \circ \pi_{1},f \circ \pi_{2},\cdots,f \circ \pi_{n}).\]
\end{definition}

\begin{corollary}
	The proximal higher topological complexity of $(X,\delta)$, denoted by TC$_{n}(X,\delta)$, is D$_{\delta^{'},\delta}(p_{1},p_{2},\cdots,p_{n})$ for the $i$th projection map $p_{i}$ from $(X \times X,\delta^{'})$ to $(X,\delta)$ for each $i \in \{1,\cdots,n\}$.
\end{corollary}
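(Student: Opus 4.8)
The plan is to mirror the proof of Corollary~\ref{c2}, now in the $n$-ary framework of Definition~\ref{def3}. Here $\delta'$ is the product proximity on the $n$-fold product $X^{n}$, so that the base of the projections is $X^{n}$ (the natural $n$-ary replacement of $X\times X$), and the proximal higher topological complexity of the space $(X,\delta)$ is taken to be that of the diagonal $\Delta:(X,\delta)\to(X^{n},\delta')$, $\Delta(x)=(x,\dots,x)$. This is the canonical specialization of Definition~\ref{def3}(ii) to a single space, just as $\text{TC}(X,\delta)=\text{TC}_{\delta,\delta}(1_{X})$ is in the binary case. Writing $p_{i}=\pi_{i}:(X^{n},\delta')\to(X,\delta)$ for the coordinate projections, Definition~\ref{def3}(ii) gives at once
\[
	\text{TC}_{n}(X,\delta)=\text{TC}_{n,\delta,\delta'}(\Delta)=\text{D}_{\delta',\delta'}\bigl(\Delta\circ p_{1},\,\Delta\circ p_{2},\,\dots,\,\Delta\circ p_{n}\bigr),
\]
so the task reduces to deleting the outer $\Delta$ from each argument.

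First I would record the two proximal-continuity facts that license this reduction: the diagonal $\Delta$ is a pc-map into the product proximity $\delta'$, and each coordinate projection $q:(X^{n},\delta')\to(X,\delta)$ is a pc-map with $q\circ\Delta=1_{X}$. The latter exhibits a strict, hence proximal-homotopy, left inverse for $\Delta$, which is exactly the hypothesis of Theorem~\ref{t5}(iii) with $h=\Delta$. Applying that theorem with $f_{i}=p_{i}$ yields
\[
	\text{D}_{\delta',\delta'}\bigl(\Delta\circ p_{1},\,\dots,\,\Delta\circ p_{n}\bigr)=\text{D}_{\delta',\delta}\bigl(p_{1},\,\dots,\,p_{n}\bigr),
\]
and combining with the display above gives the asserted identity $\text{TC}_{n}(X,\delta)=\text{D}_{\delta',\delta}(p_{1},\dots,p_{n})$.

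For completeness I would also indicate the route that most faithfully parallels Corollary~\ref{c2} via Theorem~\ref{t2}. Setting $f_{i}=p_{i}$ in the $n$-ary analogue of Theorem~\ref{t2} identifies $\text{D}_{\delta',\delta}(p_{1},\dots,p_{n})$ with the proximal Schwarz genus of the pullback of the higher proximal path fibration $\pi_{n}:P_{n}X\to X^{n}$ along $(p_{1},\dots,p_{n})$. Since $(p_{1},\dots,p_{n})=1_{X^{n}}$, this pullback is $\pi_{n}$ itself, whose genus is by definition $\text{TC}_{n}(X,\delta)$; this recovers the same conclusion and specializes to Corollary~\ref{c2} when $n=2$.

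The one genuinely delicate point is the bookkeeping of the product proximity $\delta'$: I expect the main obstacle to be verifying that nearness generated coordinatewise on $X^{n}$ simultaneously makes $\Delta$ and all projections proximally continuous, and that the proximities attached to the three corners of Theorem~\ref{t5}(iii) match those prescribed in Definition~\ref{def3}(ii). Once this compatibility is secured, the diagonal-stripping step is immediate, so the real content is conceptual—the identification of $\Delta$ as the canonical map computing $\text{TC}_{n}(X,\delta)$—rather than computational.
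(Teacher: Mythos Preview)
Your second route---setting $f_{i}=p_{i}$ so that $(p_{1},\dots,p_{n})=1_{X^{n}}$ and invoking the $n$-ary analogue of Theorem~\ref{t2}---is precisely the paper's argument: its entire proof is the single sentence ``It is a general version of Corollary~\ref{c2}.'' So that portion of your proposal matches, and in fact supplies more detail than the paper does.

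Your first route via the diagonal, however, has a gap. Definition~\ref{def3}\,\textbf{ii)} defines $\text{TC}_{n,\delta_{1},\delta'}(f)$ only for a \emph{proximal fibration} $f:(X,\delta_{1})\to(Y^{n},\delta')$, and you take $f=\Delta:(X,\delta)\to(X^{n},\delta')$ without checking this hypothesis. In ordinary topology the diagonal $X\to X\times X$ is essentially never a fibration (its homotopy fibre is $\Omega X$, nontrivial unless $X$ is contractible), and nothing in the paper suggests the proximal situation is better. So the identification $\text{TC}_{n}(X,\delta)=\text{TC}_{n,\delta,\delta'}(\Delta)$ is not licensed by Definition~\ref{def3}\,\textbf{ii)} as stated. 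You also remark that this specialization parallels $\text{TC}(X,\delta)=\text{TC}_{\delta,\delta}(1_{X})$ from the example after Definition~\ref{def2}, but note that there the map is the identity (trivially a fibration), not the diagonal. If you want to keep the diagonal argument you would need either to verify that $\Delta$ is a proximal fibration in this setting, or to observe that the right-hand side $\text{D}_{\delta,\delta'}(f\circ\pi_{1},\dots,f\circ\pi_{n})$ in Definition~\ref{def3}\,\textbf{ii)} makes sense for any pc-map $f$ and argue that the fibration hypothesis is inessential for the conclusion you draw. Neither step is in your proposal; the cleaner fix is simply to lead with your second route and drop the diagonal detour.
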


\begin{proof}
	It is a general version of Corollary \ref{c2}.
\end{proof}

\begin{theorem}\label{t8}
	\textbf{i)} Given pc-maps $f_{1},f_{2},\cdots,f_{n} : (X,\delta_{1}) \rightarrow (Y,\delta_{2})$, we have that D$_{\delta_{1},\delta_{2}}(f_{1},f_{2},\cdots,f_{n}) \leq$ TC$_{n}(Y,\delta_{2})$.
	
	\textbf{ii)} Let $f_{1},f_{2},\cdots,f_{n} : (X,\delta_{1}) \rightarrow (Y,\delta_{2})$ and $f_{1}^{'},f_{2}^{'},\cdots,f_{n}^{'} : (X^{'},\delta_{1}^{'}) \rightarrow (Y^{'},\delta_{2}^{'})$ be pc-maps. If $h : (X^{'},\delta_{1}^{'}) \rightarrow (X,\delta_{1})$ and $k : (Y,\delta_{2}) \rightarrow (Y^{'},\delta_{2}^{'})$ are proximal homotopy equivalences such that the diagram
	$$\xymatrix{
		X \ar[r]^{f_{1},\cdots,f_{n}} &
		Y \ar[d]^{k} \\
		X^{'} \ar[u]^{h} \ar[r]_{f_{1}^{'},\cdots,f_{n}^{'}} & Y^{'}}$$
	commutes, then D$_{\delta_{1},\delta_{2}}(f_{1},f_{2},\cdots,f_{n}) =$ D$_{\delta_{1}^{'},\delta_{2}^{'}}(f_{1}^{'},f_{2}^{'},\cdots,f_{n^{'}})$.
	
	\textbf{iii)} The proximal higher topological complexity is an invariant of proximal homotopy.
	
	\textbf{iv)} Let $j : (A,\delta_{1}) \rightarrow (X^{n},\delta^{'})$ and $p_{i} : (X^{n},\delta^{'}) \rightarrow (X,\delta_{1})$ be the inclusion map and $i$th projection map, respectively, with $i \in \{1,\cdots,n\}$, the relative proximal higher complexity is given by
		\begin{eqnarray*}
			\text{TC}_{n,X}(A,\delta) = \text{D}_{\delta_{1},\delta_{1}}(p_{1} \circ j,p_{2} \circ j,\cdots,p_{n} \circ j).
		\end{eqnarray*}
	
	\textbf{v)} The proximal higher topological complexity of proximal fibrations is a proximal fiber homotopy equivalence invariant. 
\end{theorem}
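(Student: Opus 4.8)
The plan is to recognize each of the five parts as the $n$-ary counterpart of a statement already proved for the two-map proximal distance, and to rerun each binary argument with the higher tools of Theorems \ref{t6}, \ref{t7}, and \ref{t5} replacing Propositions \ref{p4}, \ref{p2}, \ref{p1}, and \ref{p3}. For part (i), I would avoid the pullback used in Corollary \ref{c1} and argue in one line: writing $F = (f_1,\ldots,f_n) : (X,\delta_1) \to (Y^n,\delta')$, the projections $p_i : (Y^n,\delta') \to (Y,\delta_2)$ satisfy $p_i \circ F = f_i$, so $\text{D}_{\delta_1,\delta_2}(f_1,\ldots,f_n) = \text{D}_{\delta_1,\delta_2}(p_1 \circ F,\ldots,p_n \circ F)$, and Theorem \ref{t5} ii) applied with the pc-map $F$ bounds this by $\text{D}_{\delta',\delta_2}(p_1,\ldots,p_n) = \text{TC}_n(Y,\delta_2)$. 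Part (iv) is essentially a reading of definitions: by Definition \ref{def3} i) the relative higher complexity is $\text{D}_{\delta_1,\delta_1}(p_1|_A,\ldots,p_n|_A)$, and since $p_i|_A = p_i \circ j$ for the inclusion $j$, the asserted identity follows at once.

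For part (ii) I would copy the chain in the proof of Theorem \ref{t4}. Commutativity of the square gives $f_i' = k \circ f_i \circ h$ for each $i$, whence $\text{D}_{\delta_1',\delta_2'}(f_1',\ldots,f_n') = \text{D}_{\delta_1',\delta_2'}(k \circ f_1 \circ h,\ldots,k \circ f_n \circ h)$. Because $k$ is a proximal homotopy equivalence it possesses a left homotopy inverse, so Theorem \ref{t5} iii) strips off $k$; because $h$ is a proximal homotopy equivalence it possesses a right homotopy inverse, so Theorem \ref{t5} iv) strips off $h$, leaving $\text{D}_{\delta_1,\delta_2}(f_1,\ldots,f_n)$, with the invariance under homotopic replacement of maps handled by Theorem \ref{t7} i) exactly as Proposition \ref{p2} was used in the binary case. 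Part (iii) is then immediate: a proximal homotopy equivalence $X \simeq X'$ induces proximal homotopy equivalences $X^n \simeq (X')^n$ and fits the two projection families into the square of (ii), so (ii) applied to the projections yields $\text{TC}_n(X,\delta) = \text{TC}_n(X',\delta)$.

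The genuine work is part (v), and this is where I expect the obstacle. The binary fiber-homotopy-invariance theorem obtained its sandwich $\text{TC}(f) \leq \text{TC}(f \circ k) = \text{TC}(f') \leq \text{TC}(f' \circ h) = \text{TC}(f)$ from Lemma \ref{l2}, but that lemma is stated only for the two-map complexity of a fibration. So before the same sandwich can be run for $\text{TC}_n$ I must first prove the $n$-ary analogue of Lemma \ref{l2}: for proximal fibrations $f$ and $g$, a one-sided homotopy inverse $f'$ of $f$ should give $\text{TC}_n(g) \leq \text{TC}_n(g \circ f)$, with the reverse bound under the stronger hypothesis $f \circ f' = 1$. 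Establishing this requires unwinding Definition \ref{def3} ii), rewriting $\text{TC}_n(g \circ f)$ and $\text{TC}_n(g)$ as higher distances of the composites $g \circ f \circ \pi_i'$ and $g \circ \pi_i$, and pushing the homotopy $f \circ f' \simeq_{\delta_2,\delta_2} 1$ through the products $f' \times 1_Z$ precisely as in the proof of Lemma \ref{l2}, now invoking the higher composition and invariance statements of Theorem \ref{t5} and Theorem \ref{t7} in place of Propositions \ref{p1} and \ref{p2}. Once this higher lemma is available, the proximal fiber homotopy equivalence supplies $f = f' \circ h$ and $f' = f \circ k$, and the sandwich closes to give $\text{TC}_n(f) = \text{TC}_n(f')$.
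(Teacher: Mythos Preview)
Your proposal is correct, and for parts (i)--(iv) it aligns with the paper's terse references to Corollary~\ref{c1}, Theorem~\ref{t4}, and Definition~\ref{def3}, with the higher tools substituted in; your argument for (i) via Theorem~\ref{t5}~ii) is in fact more direct than the pullback route the paper gestures at.

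The one place you diverge meaningfully is part (v). You flag it as the genuine obstacle and plan to first establish an $n$-ary analogue of Lemma~\ref{l2} before running the sandwich $\text{TC}_n(f) \leq \text{TC}_n(f \circ k) = \text{TC}_n(f') \leq \text{TC}_n(f' \circ h) = \text{TC}_n(f)$. That is correct, but it is more than you need: the paper says (v) is a direct consequence of Definition~\ref{def3}, and there is a shortcut your analogy with the binary case obscures. The binary complexity $\text{TC}(f) = \text{D}(f \circ \pi_1, \pi_2)$ is asymmetric---only one leg carries $f$---and that asymmetry is exactly why Lemma~\ref{l2} was needed there. But Definition~\ref{def3}~ii) puts $f$ on \emph{every} leg: $\text{TC}_n(f) = \text{D}(f \circ \pi_1, \ldots, f \circ \pi_n)$ with each $\pi_i : X^n \to X$. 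Given a fiber homotopy equivalence with $f = f' \circ h$, one has $f \circ \pi_i^{X} = f' \circ h \circ \pi_i^{X} = f' \circ \pi_i^{X'} \circ h^{\times n}$, and since $h^{\times n} : X^n \to (X')^n$ inherits a right proximal homotopy inverse from $h$, a single application of Theorem~\ref{t5}~iv) gives $\text{TC}_n(f) = \text{TC}_n(f')$ directly---no sandwich, no higher Lemma~\ref{l2}. So the higher case is actually \emph{easier} than its binary predecessor, not harder; your route works but takes the long way around.
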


\begin{proof}
	See the method in the proof of Corollary \ref{c1} for the first result. For the second, see the method in the proof of Theorem \ref{t4}. By \textbf{ii)}, we conclude that the proximal higher topological complexity is a proximal homotopy invariant. The last two parts are direct consequences of Definition \ref{def3}.
\end{proof}

\quad For any proximity space $(X,\delta_{1})$, TC$_{n}(X,\delta_{1})$ is always equal to $1$ and  also TC$_{2}(X,\delta_{1}) =$ TC$(X,\delta_{1})$. By Theorem \ref{t5}, we have that TC$_{n}(X,\delta_{1}) \geq$ TC$_{n}(Y,\delta_{2})$ when $X$ dominates $Y$, which confirms Theorem \ref{t8} iii). Furthermore, the natural consequence of Theorem \ref{t6} \textbf{iv)} is the inequality TC$_{n}(X,\delta) \leq$ TC$_{n+1}(X,\delta)$.

\begin{theorem}
	\textbf{i)} Let $f$, $g : (X,\delta_{1}) \rightarrow (Y^{n},\delta^{'})$ be two proximal fibrations such that $f \simeq_{\delta_{1},\delta^{'}} g$, i.e., $f$ and $g$ are proximal homotopic to each other. Then \[\text{TC}_{n,\delta_{1},\delta^{'}}(f) = \text{TC}_{n,\delta_{1},\delta^{'}}(g).\]
	
	\textbf{ii)} Let $f : (X,\delta_{1}) \rightarrow (Y^{n},\delta^{'})$ be a proximal fibration. Then \[\text{TC}_{n,\delta_{1},\delta^{'}}(f) \leq \text{TC}_{n}(X,\delta_{1}).\]
\end{theorem}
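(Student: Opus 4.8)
The plan is to reduce both assertions to the higher-distance calculus already developed, since by Definition \ref{def3} ii) the number $\text{TC}_{n,\delta_{1},\delta'}(f)$ is by definition the higher proximal distance $\text{D}_{\delta,\delta'}(f\circ\pi_{1},\dots,f\circ\pi_{n})$, where $\pi_{i}:(X^{n},\delta)\to(X,\delta_{1})$ are the coordinate projections. Thus each part is a statement about the families $\{f\circ\pi_{i}\}$ and $\{g\circ\pi_{i}\}$ of pc-maps $(X^{n},\delta)\to(Y^{n},\delta')$, to which the invariance result (Theorem \ref{t7}) and the monotonicity result (Theorem \ref{t5}) apply verbatim.

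For part i), the first step is to upgrade the given homotopy $f\simeq_{\delta_{1},\delta'}g$ to homotopies of the composites. If $H\colon X\times I\to Y^{n}$ is a proximal homotopy with $H(-,0)=f$ and $H(-,1)=g$, then $H_{i}:=H\circ(\pi_{i}\times 1_{I})$ is a composite of pc-maps, hence a pc-map, and it restricts to $f\circ\pi_{i}$ at $t=0$ and to $g\circ\pi_{i}$ at $t=1$. Hence $f\circ\pi_{i}\simeq_{\delta,\delta'}g\circ\pi_{i}$ for each $i\in\{1,\dots,n\}$. Applying Theorem \ref{t7} i) to the two families then gives
\[
\text{D}_{\delta,\delta'}(f\circ\pi_{1},\dots,f\circ\pi_{n})=\text{D}_{\delta,\delta'}(g\circ\pi_{1},\dots,g\circ\pi_{n}),
\]
which is precisely $\text{TC}_{n,\delta_{1},\delta'}(f)=\text{TC}_{n,\delta_{1},\delta'}(g)$.

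For part ii), I would first rewrite the right-hand side using the corollary immediately following Definition \ref{def3}, which identifies $\text{TC}_{n}(X,\delta_{1})$ with the higher distance $\text{D}_{\delta,\delta_{1}}(\pi_{1},\dots,\pi_{n})$ of the same projections $\pi_{i}:(X^{n},\delta)\to(X,\delta_{1})$. Because $f$ is a proximal fibration, and in particular a pc-map, Theorem \ref{t5} i) applied with $h=f$ and $f_{i}=\pi_{i}$ yields
\[
\text{D}_{\delta,\delta'}(f\circ\pi_{1},\dots,f\circ\pi_{n})\le\text{D}_{\delta,\delta_{1}}(\pi_{1},\dots,\pi_{n}).
\]
The left side is $\text{TC}_{n,\delta_{1},\delta'}(f)$ and the right side is $\text{TC}_{n}(X,\delta_{1})$, giving the desired inequality.

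The individual steps are short, so the only point requiring care is the bookkeeping of the three proximities. One must keep the product proximity $\delta$ on $X^{n}$ fixed throughout, and notice that the two higher distances compared in part ii) differ only in their codomain data, namely $(Y^{n},\delta')$ versus $(X,\delta_{1})$; this discrepancy is exactly what postcomposition with the pc-map $f$ absorbs, which is why Theorem \ref{t5} i) is the right tool. The only structural input beyond the cited results is that $\pi_{i}\times 1_{I}$ is proximally continuous, which follows from the product construction of proximities recalled in Section \ref{sec:preliminaries}.
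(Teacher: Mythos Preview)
Your proof is correct and follows essentially the same route as the paper: both parts unwind $\text{TC}_{n,\delta_{1},\delta'}(f)$ via Definition \ref{def3} ii), then invoke Theorem \ref{t7} i) for part i) and Theorem \ref{t5} i) for part ii). Your explicit construction of the homotopy $H_{i}=H\circ(\pi_{i}\times 1_{I})$ and your remarks on the proximity bookkeeping are slightly more detailed than the paper's version, but the argument is the same.
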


\begin{proof}
    \textbf{i)} Let $\pi_{i} : (X^{n},\delta) \rightarrow (X,\delta_{1})$ be the $i$th projection map for all $i \in \{1,\cdots,n\}$. Since $f \simeq_{\delta_{1},\delta^{'}} g$, we have that $f \circ \pi_{i} \simeq_{\delta,\delta^{'}} g \circ \pi_{i}$ for each $i$. By Theorem \ref{t7} \textbf{i)}, we get
    \begin{eqnarray*}
    	\text{D}_{\delta,\delta^{'}}(f \circ \pi_{1},f \circ \pi_{2},\cdots,f \circ \pi_{n}) = \text{D}_{\delta,\delta^{'}}(g \circ \pi_{1},g \circ \pi_{2},\cdots,g \circ \pi_{n}).
    \end{eqnarray*} 
    Consequently, we find that TC$_{n,\delta_{1},\delta^{'}}(f) =$ TC$_{n,\delta_{1},\delta^{'}}(g)$.
    
    \textbf{ii)} Given the $i$th projection $\pi_{i} : (X^{n},\delta) \rightarrow (X,\delta_{1})$ with $i \in \{1,\cdots,n\}$, by Theorem \ref{t5} \textbf{i)}, we have that
    \begin{eqnarray*}
    	\text{D}_{\delta,\delta^{'}}(f \circ \pi_{1},f \circ \pi_{2}\cdots,f \circ \pi_{n}) \leq \text{D}_{\delta,\delta_{1}}(\pi_{1},\pi_{2},\cdots,\pi_{n}),
    \end{eqnarray*}
    which means that TC$_{n,\delta_{1},\delta^{'}}(f) \leq$ TC$_{n}(X,\delta_{1})$.
\end{proof}

\section{TC Numbers of Descriptive Proximity Spaces}
\label{sec:4}

\quad Let $\alpha_{1} : [0,1] \rightarrow X$ and $\alpha_{2} : [0,1] \rightarrow X$ be two descriptive proximal paths in $X$. $\alpha_{1}$ and $\alpha_{2}$ are descriptively near if for any $E$, $F \in 2^{I}$, $E \delta_{\Phi}^{'} F$ implies that $\alpha_{1}(E) \delta_{\Phi} \alpha_{2}(F)$, where $\delta_{\Phi}^{'}$ is a descriptive proximity on $I$ and $\delta_{\Phi}$ is a descriptive proximity on $X$. Hence, $\pi : PX \rightarrow X \times X$ with $\pi(\alpha) = (\alpha(0),\alpha(1))$ is said to be a descriptive proximal path fibration, where $PX$ is given by the set \[\{\alpha \ | \ \alpha : [0,1] \rightarrow X \ \ \text{is a descriptive proximal path}\}.\]

\begin{definition}
	Let $p : (X,\delta_{\Phi}) \rightarrow (X^{'},\delta_{\Phi}^{'})$ be a descriptive proximal fibration. Then a descriptive proximal Schwarz genus of $p$ is the possible minimum integer $m > 0$ if $X^{'}$ has a cover $\{U_{1},U_{2},\cdots,U_{m}\}$, i.e., $X^{'}$ can be written as the union of subsets $U_{1}$, $U_{2}$, $\cdots$, $U_{m}$, such that there exists a dpc-map $s_{i} : U_{i} \rightarrow X$ with the property $p \circ s_{i} = 1_{U_{i}}$ for each $i \in \{1,\cdots,m\}$.
\end{definition}

The descriptive proximal Schwarz genus of $p : (X,\delta_{\Phi}) \rightarrow (X^{'},\delta_{\Phi}^{'})$ is denoted by genus$_{\delta_{\Phi},\delta_{\Phi}^{'}}(p)$.

\begin{definition}\label{def4}
	Let $(X,\delta_{\Phi})$ be a connected descriptive proximity space and the map $\pi : PX \rightarrow X \times X$, $\pi(\alpha) = (\alpha(0),\alpha(1))$ be a descriptive proximal path fibration. Then the descriptive proximal topological complexity (or simply descriptive proximal complexity) of $X$, denoted by TC$(X,\delta_{\Phi})$, is the descriptive proximal Schwarz genus of $\pi$.
\end{definition}

\begin{figure}[h]
	\centering
	\includegraphics[width=0.80\textwidth]{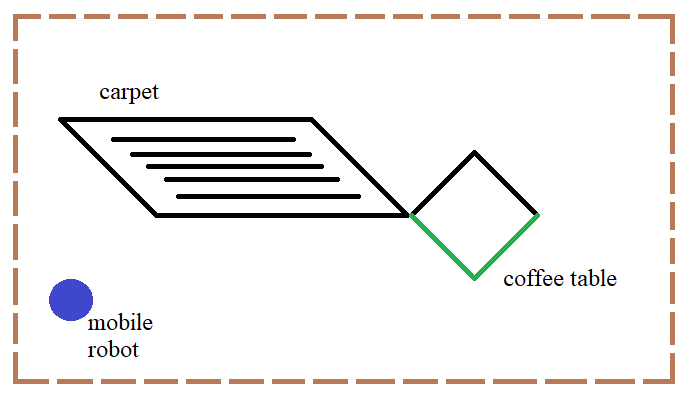}
	\caption{The proximal motion planning for cleaning the carpet and around the coffee table.}
	\label{fig:3}
\end{figure}
\begin{example}
	Consider the part of an office simulation consisting of a proximally contractible carpet and the projection of a coffee table for cleaning by a mobile robot in Figure \ref{fig:3}. Let the proximity space be representing by $(X,\delta)$. $X$ is clearly proximally connected. Since $X = W_{1} \cup W_{2}$ satisfies that there exist pc-maps $s_{1} : W_{1} \rightarrow PX$ and $s_{2} : W_{2} \rightarrow PX$ such that $\pi \circ s_{1} = 1_{W_{i}}$ and $\pi \circ s_{2} = 1_{W_{2}}$, we conclude that TC$(X,\delta) = 2$ (see Figure \ref{fig:4} for the explicit expression of $W_{1}$ and $W_{2}$). 
	\begin{figure}[h]
		\centering
		\includegraphics[width=0.80\textwidth]{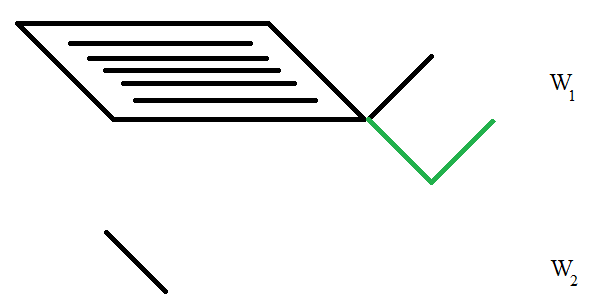}
		\caption{The proximal subspaces $W_{1}$ and $W_{2}$ that cover $X$ in Figure \ref{fig:3}.}
		\label{fig:4}
	\end{figure}

    On the other hand, if one chooses $\Phi$ as the collection of probe functions that represent colors of the office furnitures, then $(X,\delta_{\Phi})$ is not a connected descriptive proximity space. Indeed, the union of the green part and the black part is $X$ but they are not descriptively near. This shows that TC$(X,\delta_{\Phi})$ cannot be computed even if TC$(X,\delta) = 2$.
\end{example}

\begin{corollary}\label{cor2}
	For a descriptive proximity space $(X,\delta)$, TC$(X,\delta_{\Phi}) = 1$ if and only if $X$ is descriptive proximally contractible.
\end{corollary}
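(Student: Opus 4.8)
The plan is to mirror the argument of Corollary \ref{cor1} in the descriptive setting, which in turn rests on establishing the descriptive analogue of Theorem \ref{teo1}. The first observation is that, by Definition \ref{def4}, the equality TC$(X,\delta_{\Phi}) = 1$ holds precisely when the descriptive proximal Schwarz genus of the descriptive proximal path fibration $\pi : PX \rightarrow X \times X$ equals $1$; that is, when $X \times X$ admits the trivial one-element cover together with a single dpc-map $s : X \times X \rightarrow PX$ satisfying $\pi \circ s = 1_{X \times X}$. Thus the whole statement reduces to proving that such a global descriptive proximal section $s$ exists if and only if $X$ is descriptive proximally contractible.

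First I would treat the direction assuming the section $s$ exists. Fixing a point $a_{0} \in X$, define $H : X \times I \rightarrow X$ by $H(x,t) = s(a_{0},x)(t)$. Since $s$ is a dpc-map and the descriptive proximal evaluation map is a dpc-map, the composite $H$ is a dpc-map. The condition $\pi \circ s = 1_{X \times X}$ forces $s(a_{0},x)(0) = a_{0}$ and $s(a_{0},x)(1) = x$, so that $H(x,0) = a_{0}$ is the constant map and $H(x,1) = x = 1_{X}(x)$. Hence $H$ is a descriptive proximal homotopy between the constant map and the identity, which is exactly descriptive proximal contractibility.

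For the converse, I would start from a descriptive proximal contraction $H : X \times I \rightarrow X$ with $H(x,0) = a_{0}$ and $H(x,1) = x$. For any pair $(a,b)$, the assignments $\alpha(t) = H(a,t)$ and $\beta(t) = H(b,t)$ are descriptive proximal paths from $a_{0}$ to $a$ and from $a_{0}$ to $b$, respectively; concatenating the reverse of $\alpha$ with $\beta$ produces a descriptive proximal path from $a$ to $b$, and I would set $s(a,b)$ to be this concatenation. The Gluing Lemma for Descriptive Proximity guarantees that each such concatenation is itself a dpc-map $[0,1] \rightarrow X$ (the two halves agree at the midpoint, both equalling $a_{0}$), so $s$ lands in $PX$, and by construction $s(a,b)(0) = a$ and $s(a,b)(1) = b$, whence $\pi \circ s = 1_{X \times X}$.

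The main obstacle is verifying that this $s$ is a genuine dpc-map into the descriptive proximal mapping space $PX$, i.e., that descriptively near pairs $(a_{1},b_{1}) \, \delta_{\Phi}^{'} \, (a_{2},b_{2})$ are sent to descriptively near paths $s(a_{1},b_{1}) \, \delta_{\Phi} \, s(a_{2},b_{2})$. This is where the descriptive case differs delicately from the ordinary proximity case, since descriptive nearness is governed by overlap of feature-vector descriptions under $\Phi$ rather than by metric closeness; the verification hinges on the descriptive proximal continuity of $H$ propagating through the reversal $t \mapsto 1-t$ and through concatenation, combined with the definition of the descriptive nearness relation on $PX$ via the probe functions. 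Once this is confirmed, the equivalence between the existence of the global section and descriptive proximal contractibility is complete, and the corollary follows at once exactly as Corollary \ref{cor1} follows from Theorem \ref{teo1}.
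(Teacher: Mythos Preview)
Your proposal is correct and follows essentially the same approach as the paper, whose proof consists of the single line ``See the method in the proof of Theorem \ref{teo1} by considering Definition \ref{def4}.'' You have simply unpacked that reference in the descriptive setting (with the two implications written in the opposite order), and in fact you are more careful than the paper in flagging the verification that $s$ is a dpc-map into $PX$.
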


\begin{proof}
	See the method in the proof of Theorem \ref{teo1} by considering Definition \ref{def4}.
\end{proof}

\quad We say that a property is a descriptive proximity invariant provided that there exists a descriptive proximity isomorphism $f : (X,\delta_{\Phi}) \rightarrow (Y,\delta_{\Phi}^{'})$ such that $f$ preserves this property.

\begin{theorem}\label{teo3}
	Let $f : (X,\delta_{\Phi}) \rightarrow (Y,\delta_{\Phi}^{'})$ be a dpc-map. Then $f$ is continuous with respect to $\tau(\delta_{\Phi})$ and $\tau(\delta_{\Phi}^{'})$.
\end{theorem}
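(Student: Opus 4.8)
The plan is to verify continuity through the closure characterization: a map between topological spaces is continuous if and only if $f(\overline{E}) \subseteq \overline{f(E)}$ for every subset $E$ of the domain, where the closures are taken with respect to $\tau(\delta_{\Phi})$ and $\tau(\delta_{\Phi}^{'})$, respectively. Since these topologies arise from the descriptive proximities via the Kuratowski closure operator, I would first record that, exactly as in the non-descriptive case recalled earlier in the preliminaries, the closure $\overline{E}$ coincides with the collection of points that are descriptively near $E$; that is, $x \in \overline{E}$ if and only if $\{x\} \delta_{\Phi} E$. This is the literal descriptive transcription of the statement that $\tau(\delta_{\Phi})$ is induced by $\delta_{\Phi}$, with ``near'' replaced by ``descriptively near'' throughout.

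With this description in hand, I would fix an arbitrary subset $E \subset X$ and take any point $x \in \overline{E}$. By the closure description, $\{x\} \delta_{\Phi} E$, so the singleton $\{x\}$ is descriptively near $E$. Applying the defining property of a dpc-map to the pair $\{x\}, E \in 2^{X}$ then yields $f(\{x\}) \delta_{\Phi}^{'} f(E)$, i.e. $\{f(x)\} \delta_{\Phi}^{'} f(E)$. Reading this through the closure description for $\tau(\delta_{\Phi}^{'})$ gives $f(x) \in \overline{f(E)}$. Since $x$ was arbitrary, this establishes $f(\overline{E}) \subseteq \overline{f(E)}$, and hence $f$ is continuous.

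The substance of the argument is all bookkeeping, and the dpc condition is used only for the pair consisting of a singleton and the set $E$, so no strengthening of the hypothesis is required. The one point deserving care is the closure description itself: I am leaning on the fact that the Kuratowski closure operator attached to $\delta_{\Phi}$ sends $E$ to its set of descriptively near points. The main obstacle, if any, is confirming that this descriptive closure operator genuinely satisfies the Kuratowski axioms so that $\tau(\delta_{\Phi})$ is a legitimate topology; but this is part of the cited construction of $\tau(\delta_{\Phi})$ rather than something this proof must reprove, so I would simply invoke it and proceed with the closure inclusion above.
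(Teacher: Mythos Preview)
Your proposal is correct and follows essentially the same route as the paper: both arguments use the closure characterization of continuity, identify the closure in $\tau(\delta_{\Phi})$ with the set of points descriptively near $E$, and then apply the dpc-property to the pair $(\{x\},E)$ to pass from $x\in\overline{E}$ to $f(x)\in\overline{f(E)}$. The only difference is that you spell out the foundational dependence on the Kuratowski closure operator more explicitly than the paper does.
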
 

\begin{proof}
	Let $E \subset X$ and $x \in X$. $f$ is continuous if and only if $f(cl_{\Phi}(E)) \subset cl_{\Phi^{'}}(f(E))$. Assume that $x \delta_{\Phi} E$. Then we get $x \in cl_{\Phi}(E)$. It follows that $f(x) \in f(cl_{\Phi}(E))$. On the other hand, the fact $f$ is a dpc-map implies that $f(x) \delta_{\Phi}^{'} f(E)$. This means that $f(x) \in cl_{\Phi^{'}}(f(E))$. Therefore, we find $f(cl_{\Phi}(E))$ is a subset of $cl_{\Phi^{'}}(f(E))$.
\end{proof}

\begin{corollary}
	Every topological invariant is a descriptive proximity invariant. Moreover, descriptive topological complexity is a descriptive proximity invariant.
\end{corollary}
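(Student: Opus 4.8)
The plan is to reduce the statement to the purely topological fact that homeomorphisms preserve topological invariants, by showing that a descriptive proximity isomorphism is exactly a homeomorphism between the two induced topological spaces. Once this identification is in place, the first assertion is immediate from the definition of a topological invariant, and the second follows because the topological complexity number is a topological invariant.

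First I would fix a descriptive proximity isomorphism $f : (X,\delta_{\Phi}) \rightarrow (Y,\delta_{\Phi}^{'})$, which by definition is a bijection such that both $f$ and $f^{-1}$ are dpc-maps. I would then invoke Theorem \ref{teo3} twice: applied to $f$, it gives that $f : (X,\tau(\delta_{\Phi})) \rightarrow (Y,\tau(\delta_{\Phi}^{'}))$ is continuous with respect to the induced topologies; applied to the dpc-map $f^{-1}$, it gives that $f^{-1} : (Y,\tau(\delta_{\Phi}^{'})) \rightarrow (X,\tau(\delta_{\Phi}))$ is continuous as well. Combined with bijectivity, this shows that $f$ is a homeomorphism of the induced topological spaces.

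Next, for an arbitrary topological invariant $P$, I would argue as follows: if $(X,\delta_{\Phi})$ enjoys property $P$ and $f$ is a descriptive proximity isomorphism, then by the previous step the induced homeomorphism transports $P$ to $(Y,\tau(\delta_{\Phi}^{'}))$, so $(Y,\delta_{\Phi}^{'})$ also enjoys $P$. Hence $P$ is preserved under every descriptive proximity isomorphism, which is precisely the assertion that $P$ is a descriptive proximity invariant. For the ``moreover'' clause I would simply recall that the topological complexity number is a homotopy invariant by \cite{Farber:2003}, and in particular a topological invariant, so it is covered by the first assertion and is therefore a descriptive proximity invariant.

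The only substantive point of the whole argument is the first step, namely recognizing that a descriptive proximity isomorphism induces a homeomorphism on the induced topologies; since Theorem \ref{teo3} is available and need only be applied in both directions, this presents no genuine obstacle, and everything else is a formal consequence of the definitions. The one thing I would be careful about is the correct reading of ``descriptive topological complexity'' as the topological complexity of the underlying induced topological space, so that its status as a topological invariant is what is being exploited.
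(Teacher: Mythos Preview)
Your argument is correct and matches the paper's own proof: both deduce from Theorem~\ref{teo3} (applied to $f$ and $f^{-1}$) that descriptive proximally isomorphic spaces are homeomorphic, which gives the first claim, and then invoke the fact that TC is a homotopy invariant for the second. Your write-up is simply more explicit in spelling out the two applications of Theorem~\ref{teo3} and the unpacking of the definitions.
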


\begin{proof}
	By Theorem \ref{teo3}, descriptive proximally isomorphic spaces are homeomorphic. This proves the first statement. For the second part, it is enough to recall that TC is a homotopy invariant, i.e., homeomorphic spaces have the same number in terms of TC. 
\end{proof}

\begin{definition}
	\textbf{i)} Assume that $(X,\delta_{\Phi})$ is a proximity space and $Y \subset X \times X$. The relative descriptive proximal topological complexity TC$_{X}(Y,\delta_{\Phi_{Y}})$ is defined as the descriptive proximal Schwarz genus of the descriptive proximal fibration $\pi^{'} : PX^{Y} \rightarrow Y$, where $PX^{Y}$ is a subset of $PX$ that consists of all descriptive proximal paths $\alpha : [0,1] \rightarrow X$ with the property $(\alpha(0),\alpha(1)) \in Y$.
	
	\textbf{ii)} Let $f : (X,\delta_{\Phi}^{1}) \rightarrow (Y,\delta_{\Phi}^{2})$ be a descriptive proximal fibration. Given a descriptive proximal fibration $\pi^{f} : (PX,\delta_{\Phi}^{'}) \rightarrow (X \times Y,\delta_{\Phi}^{''})$ with $\pi^{f} = (1 \times f) \circ \pi$, i.e., for any descriptive proximal path $\alpha$ on $X$, $\pi^{f}(\alpha) = (\alpha(0),f \circ \alpha(1))$, the descriptive proximal topological complexity of $f$, denoted by TC$_{\delta_{\Phi}^{1},\delta_{\Phi}^{2}}(f)$, is defined as genus$_{\delta_{\Phi}^{'},\delta_{\Phi}^{''}}(\pi^{f})$.
\end{definition}

\begin{definition}
	\textbf{i)} Let $f$ and $g$ be any two dpc-maps from $(X,\delta_{\Phi}^{1})$ to $(Y,\delta_{\Phi}^{2})$. Then the descriptive proximal homotopic distance between $f$ and $g$, denoted by D$_{\delta_{\Phi}^{1},\delta_{\Phi}^{2}}(f,g)$, is the minimum integer $m>0$ if the following hold:
	\begin{itemize}
		\item $X$ has at least one cover $\{V_{1}, \cdots, V_{m}\}$.
		\item For all $j = 1, \cdots, m$, $f|_{V_{j}} \simeq_{\delta_{\Phi}^{1},\delta_{\Phi}^{2}} g|_{V_{j}}$.
	\end{itemize}
	If such a covering does not exist, then D$_{\delta_{\Phi}^{1},\delta_{\Phi}^{2}}(f,g) = \infty$.
	
	\textbf{ii)} Given any dpc-maps $f_{1}, f_{2}, \cdots, f_{n}$ from $(X,\delta_{\Phi}^{1})$ to $(Y,\delta_{\Phi}^{2})$, the descriptive proximal higher ($n$th) homotopic distance between $f_{1}, f_{2}, \cdots, f_{n}$, denoted by D$_{\delta_{\Phi}^{1},\delta_{\Phi}^{2}}(f_{1},f_{2},\cdots,f_{n})$, is the minimum integer $m>0$ if the following hold:
	\begin{itemize}
		\item $X$ has at least one cover $\{V_{1}, \cdots, V_{m}\}$.
		\item For all $j = 1, \cdots, m$, $f_{1}|_{V_{j}} \simeq_{\delta_{\Phi}^{1},\delta_{\Phi}^{2}} f_{2}|_{V_{j}} \simeq_{\delta_{\Phi}^{1},\delta_{\Phi}^{2}} \cdots \simeq_{\delta_{\Phi}^{1},\delta_{\Phi}^{2}} f_{n}|_{V_{j}}$.
	\end{itemize}
	If such a covering does not exist, then D$_{\delta_{\Phi}^{1},\delta_{\Phi}^{2}}(f_{1},f_{2},\cdots,f_{n})$ is $\infty$.
	
	\textbf{iii)} Given two dpc-maps $f$, $g : (X,\delta_{\Phi}^{1}) \rightarrow (Y,\delta_{\Phi}^{2})$ with $A \subset X$, the relative descriptive proximal homotopic distance between $f$ and $g$ on $A$, denoted by D$^{X}_{\delta_{\Phi}^{1},\delta_{\Phi}^{2}}(A;f,g)$, is D$_{\delta_{\Phi}^{1},\delta_{\Phi}^{2}}(f|_{A},g|_{A})$.
\end{definition}

\begin{theorem}
	Let $f_{1},f_{2},\cdots,f_{n} : (X,\delta_{\Phi}^{1}) \rightarrow (Y,\delta_{\Phi}^{2})$ be dpc-maps. Then the following hold:
	
	\textbf{i)} Assume that $\sigma$ is any permutation of $\{1,2,\cdots,n\}$. Then
	\begin{eqnarray*}
		\text{D}_{\delta_{\Phi}^{1},\delta_{\Phi}^{2}}(f_{1},f_{2},\cdots,f_{n}) = \text{D}_{\delta_{\Phi}^{1},\delta_{\Phi}^{2}}(f_{\sigma(1)},f_{\sigma(2)},\cdots,f_{\sigma(n)}).
		\end{eqnarray*} 
		
		\textbf{ii)} D$_{\delta_{\Phi}^{1},\delta_{\Phi}^{2}}(f_{1},f_{2},\cdots,f_{n}) = 1$ if and only if $f_{1},f_{2},\cdots,f_{n}$ are descriptive proximally homotopic distance.
		
		\textbf{iii)} Assume that $1 < r < n$. Then
		\begin{eqnarray*}
			\text{D}_{\delta_{\Phi}^{1},\delta_{\Phi}^{2}}(f_{1},f_{2},\cdots,f_{r}) \leq  \text{D}_{\delta_{\Phi}^{1},\delta_{\Phi}^{2}}(f_{1},f_{2},\cdots,f_{n}).
		\end{eqnarray*}
	    
	    \textbf{iv)} Let $f_{1},f_{2},\cdots,f_{n} : (X,\delta_{\Phi}^{1}) \rightarrow (Y,\delta_{\Phi}^{2})$ and $g_{1},g_{2},\cdots,g_{n} : (X,\delta_{\Phi}^{1}) \rightarrow (Y,\delta_{\Phi}^{2})$ be dpc-maps. Then \[\text{D}_{\delta_{\Phi}^{1},\delta_{\Phi}^{2}}(f_{1},f_{2},\cdots,f_{n}) =  \text{D}_{\delta_{\Phi}^{1},\delta_{\Phi}^{2}}(g_{1},g_{2},\cdots,g_{n})\] if $f_{i} \simeq_{\delta_{\Phi}^{1},\delta_{\Phi}^{2}} g_{i}$ for each $i \in \{1,\cdots,n\}$.
\end{theorem}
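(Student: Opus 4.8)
The plan is to carry the four arguments over from the proximity setting essentially verbatim, since the descriptive versions of the underlying facts are already available: descriptive proximal homotopy $\simeq_{\delta_{\Phi}^{1},\delta_{\Phi}^{2}}$ is an equivalence relation (as recorded in Section \ref{sec:preliminaries}), and it behaves well under restriction to a subspace carrying the induced descriptive proximity. Concretely, parts \textbf{i)}, \textbf{ii)}, \textbf{iii)} are the descriptive analogues of Theorem \ref{t6}, and part \textbf{iv)} is the descriptive analogue of Theorem \ref{t7} \textbf{i)} (equivalently Proposition \ref{p2}). So the whole proof reduces to re-reading those arguments with $\delta$ replaced by $\delta_{\Phi}$ and pc-maps replaced by dpc-maps, the only new verifications being the subspace-restriction properties for dpc-maps and descriptive proximal homotopies.

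For \textbf{i)}, I would note that the defining requirement for $\mathrm{D}_{\delta_{\Phi}^{1},\delta_{\Phi}^{2}}(f_{1},\dots,f_{n})$ on a cover element $V_{j}$ is that the restrictions $f_{1}|_{V_{j}},\dots,f_{n}|_{V_{j}}$ all lie in a single $\simeq_{\delta_{\Phi}^{1},\delta_{\Phi}^{2}}$-class. Since membership in a common equivalence class is insensitive to the ordering of the maps, any cover witnessing the left-hand distance witnesses the permuted one and conversely, so the two integers coincide for every permutation $\sigma$. For \textbf{ii)}, if the distance equals $1$ then the cover is the single set $X$, whence $f_{1}\simeq_{\delta_{\Phi}^{1},\delta_{\Phi}^{2}}\cdots\simeq_{\delta_{\Phi}^{1},\delta_{\Phi}^{2}} f_{n}$ on all of $X$; conversely, if the maps are descriptive proximally homotopic distance, the one-element cover $\{X\}$ realizes the value $1$. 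For \textbf{iii)}, any cover $\{V_{1},\dots,V_{m}\}$ realizing $\mathrm{D}_{\delta_{\Phi}^{1},\delta_{\Phi}^{2}}(f_{1},\dots,f_{n})$ already serves for the shorter list, because on each $V_{j}$ the chain $f_{1}|_{V_{j}}\simeq\cdots\simeq f_{n}|_{V_{j}}$ contains the sub-chain $f_{1}|_{V_{j}}\simeq\cdots\simeq f_{r}|_{V_{j}}$; the inequality is then immediate from minimality.

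For \textbf{iv)} I would take a cover $\{V_{1},\dots,V_{m}\}$ realizing $\mathrm{D}_{\delta_{\Phi}^{1},\delta_{\Phi}^{2}}(f_{1},\dots,f_{n})$ and restrict each hypothesized descriptive proximal homotopy $f_{i}\simeq_{\delta_{\Phi}^{1},\delta_{\Phi}^{2}} g_{i}$ to $V_{j}$. The restricted map $G|_{V_{j}\times I}$ is again a dpc-map, since it is the composite of $G$ with the inclusion $V_{j}\times I \hookrightarrow X\times I$, and that inclusion is a dpc-map by the definition of the induced (subspace) descriptive proximity. Transitivity of $\simeq_{\delta_{\Phi}^{1},\delta_{\Phi}^{2}}$ then upgrades $f_{1}|_{V_{j}}\simeq\cdots\simeq f_{n}|_{V_{j}}$ into $g_{1}|_{V_{j}}\simeq\cdots\simeq g_{n}|_{V_{j}}$, so the same cover works for the $g_{i}$ and the inequality $\leq$ follows; the reverse inequality is obtained by interchanging the roles of $f_{i}$ and $g_{i}$, giving equality.

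The one point deserving genuine care — and the likely obstacle — is the claim used throughout \textbf{iv)} that restriction preserves descriptive proximal homotopy. This is exactly where the descriptive setting differs from the ordinary proximity setting, because the descriptive intersection and union are defined through feature vectors rather than set membership. I would therefore verify explicitly that the subspace descriptive proximity $\delta_{\Phi_{V_{j}}}$ makes the inclusion a dpc-map, so that composing with it preserves the dpc property and hence the descriptive proximal homotopy; once this is in hand, transitivity of $\simeq_{\delta_{\Phi}^{1},\delta_{\Phi}^{2}}$ (and, if one wishes to concatenate homotopies on overlaps, the Gluing Lemma for Descriptive Proximity) supplies the rest, and the remaining steps are the routine transcriptions described above.
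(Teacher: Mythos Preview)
Your proposal is correct and takes exactly the same approach as the paper: the paper's own proof is the single sentence ``The proof is in a same way with the proofs of Theorem \ref{t6} and Theorem \ref{t7},'' i.e.\ a bare reference to the proximity-setting arguments, which is precisely the transcription you outline (and in fact you give more detail than the paper does, including the subspace/restriction check that the paper leaves implicit).
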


\begin{proof}
	The proof is in a same way with the proofs of Theorem \ref{t6} and Theorem \ref{t7}.
\end{proof}

\begin{definition}
	\textbf{i)} Given any descriptive proximally continuous maps $f_{1},f_{2},\cdots,f_{n}$ from $(X,\delta_{\Phi}^{1})$ to $(Y,\delta_{\Phi}^{2})$ with $A \subset X$, the relative descriptive proximal higher homotopic distance between $f_{1},f_{2},\cdots,f_{n}$ on $A$, denoted by D$^{X}_{\delta_{\Phi}^{1},\delta_{\Phi}^{2}}(A;f_{1},f_{2},\cdots,f_{n})$, is D$_{\delta_{\Phi}^{1},\delta_{\Phi}^{2}}(f_{1}|_{A},f_{2}|_{A},\cdots,f_{n}|_{A})$.
	
	\textbf{ii)} Given a descriptive proximal fibration $f : (X,\delta_{\Phi}^{1}) \rightarrow (Y,\delta_{\Phi}^{2})$ and the projection maps $\pi_{1} : (X \times Y,\delta_{\Phi}) \rightarrow (X,\delta_{\Phi}^{1})$, $\pi_{2} : (X \times Y,\delta_{\Phi}) \rightarrow (Y,\delta_{\Phi}^{2})$, the descriptive proximal topological complexity of $f$ is defined as TC$_{\delta_{\Phi}^{1},\delta_{\Phi}^{2}}(f) =$ D$_{\delta_{\Phi},\delta_{\Phi}^{2}}(f \circ \pi_{1},\pi_{2})$.
	
	\textbf{iii)} Let $n>1$. Assume that $f = (f_{1},f_{2},\cdots,f_{n}) : (X,\delta_{\Phi}^{1}) \rightarrow (Y^{n},\delta_{\Phi}^{'})$ is a descriptive proximal fibration, and $\pi_{i} : (X^{n},\delta_{\Phi}) \rightarrow (X,\delta_{\Phi}^{1})$ is the $i$th projection map for each $i \in \{1,\cdots,n\}$. Then the descriptive proximal higher topological complexity of $f$ is defined by TC$_{n,\delta_{\Phi}^{1},\delta_{\Phi}^{'}}(f) =$ D$_{\delta_{\Phi},\delta_{\Phi}^{'}}(f \circ \pi_{1},f \circ \pi_{2},\cdots,f \circ \pi_{n})$.
\end{definition}

\begin{theorem}
	\textbf{i)} The descriptive proximal topological complexity of $(X,\delta_{\Phi})$ is D$_{\delta_{\Phi}^{'},\delta_{\Phi}}(p_{1},p_{2})$ for the projection maps $p_{1}, p_{2} : (X \times X,\delta_{\Phi}^{'}) \rightarrow (X,\delta_{\Phi})$.
	
	\textbf{ii)} The descriptive proximal higher topological complexity of $(X,\delta_{\Phi})$ is \[\text{D}_{\delta_{\Phi}^{'},\delta_{\Phi}}(p_{1},p_{2},\cdots,p_{n})\] for the $i$th projection map $p_{i} : (X \times X,\delta_{\Phi}^{'}) \rightarrow (X,\delta_{\Phi})$ for each $i \in \{1,\cdots,n\}$.
\end{theorem}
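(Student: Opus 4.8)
The plan is to reproduce, in the descriptive setting, the argument that passed from Theorem \ref{t2} to Corollary \ref{c2}. The one ingredient I would first record is the descriptive analogue of Theorem \ref{t2}: if $f,g : (X,\delta_{\Phi}^{1}) \rightarrow (Y,\delta_{\Phi}^{2})$ are dpc-maps and $\pi_{1} : (E,\delta_{\Phi}^{3}) \rightarrow (X,\delta_{\Phi}^{1})$ is the pullback of the descriptive proximal path fibration $\pi : PY \rightarrow Y \times Y$ along $(f,g)$, then $\text{D}_{\delta_{\Phi}^{1},\delta_{\Phi}^{2}}(f,g) = \text{genus}_{\delta_{\Phi}^{3},\delta_{\Phi}^{1}}(\pi_{1})$. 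This is obtained verbatim from the pullback computation of Theorem \ref{t2}, replacing every pc-map, proximal homotopy and proximity by its descriptive counterpart; the only external facts needed are that the descriptive evaluation map is a dpc-map and the Gluing Lemma for descriptive proximity, both recalled in Section \ref{sec:preliminaries}.

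For part \textbf{i)} I would then specialize this with $Y = X$, $f = p_{1}$, $g = p_{2}$. Here $(f,g) = (p_{1},p_{2})$ sends $(x_{0},x_{1}) \mapsto (x_{0},x_{1})$, so it is exactly $1_{X \times X}$ on $(X \times X,\delta_{\Phi}^{'})$. The pullback of $\pi : PX \rightarrow X \times X$ along the identity is $\pi$ itself, whence $(E,\delta_{\Phi}^{3}) = (PX,\delta_{\Phi}^{''})$ and $\pi_{1} = \pi$. Therefore $\text{genus}_{\delta_{\Phi}^{3},\delta_{\Phi}}(\pi_{1})$ is the descriptive proximal Schwarz genus of the descriptive proximal path fibration, which is $\text{TC}(X,\delta_{\Phi})$ by Definition \ref{def4}; combining gives $\text{TC}(X,\delta_{\Phi}) = \text{D}_{\delta_{\Phi}^{'},\delta_{\Phi}}(p_{1},p_{2})$. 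Concretely, the two inequalities behind the descriptive Theorem \ref{t2} unwind so: from a cover $\{V_{1},\cdots,V_{m}\}$ realizing $\text{D}_{\delta_{\Phi}^{'},\delta_{\Phi}}(p_{1},p_{2}) = m$ with descriptive homotopies $H_{j} : V_{j} \times I \rightarrow X$ from $p_{1}|_{V_{j}}$ to $p_{2}|_{V_{j}}$, the assignment $s_{j}(x_{0},x_{1}) = H_{j}((x_{0},x_{1}),-)$ is a descriptive proximal path from $x_{0}$ to $x_{1}$, hence a dpc section of $\pi$ with $\pi \circ s_{j} = 1_{V_{j}}$; conversely dpc sections $s_{i} : U_{i} \rightarrow PX$ give descriptive homotopies $H_{i}((x_{0},x_{1}),t) = s_{i}(x_{0},x_{1})(t)$ between $p_{1}|_{U_{i}}$ and $p_{2}|_{U_{i}}$.

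For part \textbf{ii)} I would run the identical scheme one dimension higher: replace $X \times X$ by $X^{n}$, the path fibration $\pi$ by the higher descriptive proximal path fibration $\pi_{n} : PX \rightarrow X^{n}$ evaluating a path at the $n$ prescribed parameters, whose descriptive Schwarz genus is $\text{TC}_{n}(X,\delta_{\Phi})$, and the two-map distance by the higher distance $\text{D}_{\delta_{\Phi}^{'},\delta_{\Phi}}(p_{1},\cdots,p_{n})$ of the projections $p_{i} : (X^{n},\delta_{\Phi}^{'}) \rightarrow (X,\delta_{\Phi})$. Since $(p_{1},\cdots,p_{n}) = 1_{X^{n}}$, the pullback of $\pi_{n}$ along it is $\pi_{n}$ again, and the higher descriptive analogue of Theorem \ref{t2} yields $\text{TC}_{n}(X,\delta_{\Phi}) = \text{D}_{\delta_{\Phi}^{'},\delta_{\Phi}}(p_{1},\cdots,p_{n})$, just as in the general version of Corollary \ref{c2}.

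I expect the main obstacle to lie precisely in the two points where descriptive proximal continuity must be verified: that the section $s_{j}$ assembled from a descriptive homotopy is genuinely a dpc-map into the descriptive mapping space $PX$, and dually that $s_{i}(x_{0},x_{1})(t)$ defines a descriptive homotopy, since this is where the descriptive mapping-space proximity and the dpc-property of the descriptive evaluation map are actually invoked. A secondary point, for \textbf{ii)}, is to make the higher descriptive path fibration $\pi_{n}$ and its descriptive Schwarz genus precise; the statement is most naturally read with domain $X^{n}$ rather than $X \times X$. Beyond these, everything is a direct transcription of the proofs of Theorem \ref{t2} and Corollary \ref{c2} into the descriptive language.
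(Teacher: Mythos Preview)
Your proposal is correct and follows exactly the paper's approach: the paper's proof is the single line ``See the method in the proof of Corollary~\ref{c2}'' for \textbf{i)} and ``It is the generalization of part \textbf{i)}'' for \textbf{ii)}, which is precisely what you carry out---transcribe Theorem~\ref{t2} and Corollary~\ref{c2} into the descriptive setting by taking $f=p_{1}$, $g=p_{2}$ so that $(f,g)=1_{X\times X}$ and the pullback is the path fibration itself. Your remark that the domain in \textbf{ii)} should read $X^{n}$ rather than $X\times X$ is well taken; this appears to be a typo inherited from the analogous proximal statement earlier in the paper.
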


\begin{proof}
	\textbf{i)} See the method in the proof of Corollary \ref{c2}.
	
	\textbf{ii)} It is the generalization of the part \textbf{i)}.
\end{proof}

\begin{example}
	Consider Figure \ref{fig:1} again. If a set of probe functions $\phi : X \rightarrow \mathbb{R}$ represents the color in $X$, that is the projection of the lounge of a house on the floor, then the fact that $(X,\delta_{\Phi})$ is not connected implies that the descriptive proximal TC of $X$ cannot be computed. On the other hand, assume that a set of probe functions $\phi : X \rightarrow \mathbb{R}$ represents the feature that any furniture has at least one edge. Then $X$ is $\delta_{\Phi}-$connected right now. The descriptive proximal TC of sofa1 is $2$ similar to the method in Example \ref{ex1} (see Figure \ref{fig:2} for the method again). It can be easily seen the fact TC$_{3}(\text{sofa1},\delta_{\Phi}) = 2$ by using the same method. Moreover, the computation holds true for other furnitures.
\end{example}

\section{Conclusion}
\label{sec:conclusion}
\quad In the literature, proximal motion planning problem studies follow motion planning problem studies on topological spaces and takes these algorithmic works to a different ground. The strength of the paper comes from the fact that it is the basis for further investigations of motion planning problems in proximity spaces. Future research will not be surprised to find a significant connection between the descriptive proximity spaces, which support many feature vector types such as color or shape, and the parametrized topological complexity, which places a high value on external conditions. In \cite{PetersTane2:2022}, Peters and Vergili present the Lusternik-Schnirelmann category (LS-cat) for proximity (and descriptive proximity) spaces. The relationship between LS-cat and TC can be investigated as an open problem. As another issue, the relation of other topological complexity types, such as symmetric topological complexity or monoidal topological complexity, with different proximity types such as Lodato proximity or Cech proximity also remains an open problem right now. In summary, this is an investigation that examines only the basic concepts of TC computation in proximity or descriptive proximity spaces.

\acknowledgment{The Scientific and Technological Research Council of Turkey TÜBTAK-1002-A funded this work under project number 122F454. The second author is thankful to the Azerbaijan State Agrarian University for all their hospitality and generosity during his stay.}


\begin{thebibliography}{99}

\bibitem{Efremovic:1952} V.A. Efremovic, The geometry of proximity I, Matematicheskii Sbornik(New Series), 31(73), 189-200, (1952).

\bibitem{Farber:2003} M. Farber, Topological complexity of motion planning, Discrete and Computational Geometry, 29(2), 211-221, (2003).

\bibitem{MelihKaraca:2022} M. İs, and İ. Karaca, Higher topological complexity for fibrations, FILOMAT (Special Issue Dedicated by H. Poincare), 36(20), 6885-6896, (2022).

\bibitem{MelihKaraca:2023} M. İs, and İ. Karaca, Some Properties On Proximal Homotopy Theory, Preprint (2023).

\bibitem{MelihKaraca:2020} M. İs, and İ. Karaca, The higher topological complexity in digital images, Applied General Topology, 21, 305-325, (2020).

\bibitem{MelihKaraca:2023(2)} M. İs, and İ. Karaca, Certain topological methods for computing digital topological complexity, Korean Journal of Mathematics, 31(1), 1-16, (2023).

\bibitem{MelihKaraca:2022(1)} M. İs, and İ. Karaca, Topological complexities of finite digital images, Journal of Linear and Topological Algebra, 11(1), 55-68, (2022).

\bibitem{MelihKaraca:2022(2)} M. İs, and İ. Karaca, Counterexamples for topological complexity in digital images, Journal of the International Mathematical Virtual Institute, 12(1), 103-121, (2022).

\bibitem{KaracaMelih:2018} İ. Karaca, and M. İs, Digital topological complexity numbers, Turkish Journal of Mathematics, 42(6), 3173-3181, (2018).

\bibitem{Leader:1964} S. Leader, On products of proximity spaces, Mathemathische Annalen, 154, 185-194, (1964).

\bibitem{NaimpallyWarrack:1970} S.A. Naimpally, and B.D. Warrack, Proximity Spaces, Cambridge Tract in Mathematics No. 59, Cambridge University Press, Cambridge, UK, x+128 pp., Paperback (2008), MR0278261 (1970).

\bibitem{NaimpallyPeters:2013} S.A. Naimpally, and J.F. Peters, Topology With Applications. Topological
Spaces via Near and Far, World Scientific, Singapore (2013).

\bibitem{MrowkaPervin:1964} S.G. Mrowka, and W.J. Pervin, On uniform connectedness, Proceedings of the American Mathematical Society, 15(3), 446-449, (1964).

\bibitem{Pavesic:2019} P. Pavesic, Topological complexity of a map, Homology, Homotopy and Applications, 21, 107--130 (2019).

\bibitem{PeiRen:1985} F. Pei-Ren, Proximity on function spaces, Tsukuba Journal of Mathematics, 9(2), 289-297, (1985). 

\bibitem{Peters1:2007} J.F. Peters, Near sets. General theory about nearness of objects, Applied
Mathematical Sciences, 1(53), 2609-2629, (2007). 

\bibitem{Peters2:2007} J.F. Peters, Near sets. Special theory about nearness of objects, Fundamenta Informaticae, 75(1-4), 407-433, (2007).

\bibitem{PetersNaimpally:2012} J.F. Peters, and S.A. Naimpally, Applications of near sets, American Mathematical Society Notices, 59(4), 536-542, (2012).

\bibitem{Peters:2013} J.F. Peters, Near sets: An introduction, Mathematics in Computer Science, 7(1), 3-9, (2013).

\bibitem{Peters:2014} J.F. Peters, Topology of Digital Images: Visual Pattern Discovery in Proximity Spaces (Vol. 63), Springer Science \& Business Media (2014).

\bibitem{PetersTane:2021} J.F. Peters, and T. Vergili, Good Coverings of Proximal Alexandrov Spaces. Homotopic Cycles in Jordan Curve Theorem Extension., arXiv preprint arXiv:2108.10113 (2021).

\bibitem{PetersTane2:2021} J.F. Peters, and T. Vergili, Descriptive Proximal Homotopy. Properties and Relations., arXiv preprint arXiv:2104.05601v1 (2021).

\bibitem{PetersTane:2022} J.F. Peters, and T. Vergili, Good Coverings of Proximal Alexandrov Spaces. Path Cycles in the Extension of the Mitsuishi-Yamaguchi good covering and Jordan Curve Theorems., Applied General Topology, 24(1), 25-45, (2023).

\bibitem{PetersTane2:2022} J.F. Peters, and T. Vergili, Proximity Space Categories. Results For Proximal Lyusternik-Schnirel'man, Csaszar And Bornology Categories., submitted to Afrika Matematika (2022).

\bibitem{Rudyak:2010} Y. Rudyak, On higher analogs of topological complexity, Topology and Its Applications, 157, 916-920, (2010). Erratum: Topology and Its Applications, 157, 1118 (2010).

\bibitem{Smirnov:1952} Y.M. Smirnov, On proximity spaces, Matematicheskii Sbornik(New Series), 31(73), 543-574, (1952). English Translation: American Mathematical Society Translations: Series 2, 38, 5-35, (1964). 

\end{thebibliography}
\end{document}